\newtheorem{theorem}{Theorem}[section]
\newtheorem{proposition}[theorem]{Proposition}
\newtheorem{lemma}[theorem]{Lemma}
\newtheorem{corollary}[theorem]{Corollary}
\numberwithin{equation}{section}
\numberwithin{theorem}{section}
\renewcommand{\epsilon}{\varepsilon}
\newcommand{\mc}[1]{{\mathcal #1}}
\newcommand{\bb}[1]{{\mathbb #1}}
\newcommand{\bs}[1]{{\boldsymbol #1}}
\newcommand{\eps}{\varepsilon}
\newcommand{\lp}{\langle}
\newcommand{\rp}{\rangle}
\newcommand{\id}{{\bs 1}}
\newcommand{\rmd}{\mathrm{d}}
\newcommand{\rme}{\mathrm{e}}
\title[Asymptotics of the $\phi^4_1$ measure in the sharp interface limit]
{Asymptotics of the $\phi^4_1$ measure \\ in the sharp interface limit}
\author[L.\ Bertini]{Lorenzo Bertini}
\address{Lorenzo Bertini \hfill\break \indent
Dipartimento di Matematica, Universit\`a di Roma La Sapienza
\hfill\break \indent
P.le Aldo Moro 5, 00185 Roma, Italy}
\email{bertini@mat.uniroma1.it}
\author[P.\ Butt\`a]{Paolo Butt\`a}
\address{Paolo Butt\`a \hfill\break \indent
Dipartimento di Matematica, Universit\`a di Roma La Sapienza,
\hfill\break \indent
P.le Aldo Moro 5, 00185 Roma, Italy}
\email{butta@mat.uniroma1.it}
\author[G.\ Di Ges\`u]{Giacomo Di Ges\`u}
\address{Giacomo Di Ges\`u\hfill\break \indent
Dipartimento di Matematica, Universit\`a di Roma La Sapienza,
\hfill\break \indent
P.le Aldo Moro 5, 00185 Roma, Italy}
\email{giacomo.digesu@uniroma1.it}
\begin{document}

\begin{abstract}
We consider the $\phi^4_1$ measure in an interval of length $\ell$, defined by a symmetric double-well potential $W$ and inverse temperature $\beta$. Our results concern its asymptotic behavior in the joint limit $\beta, \ell \to \infty$, both in the subcritical regime $\ell \ll \rme^{\beta C_W}$ and in the supercritical regime $\ell \gg \rme^{\beta C_W}$, where $C_W$ denotes the surface tension. In the former case, in which the measure concentrates on the pure phases, we prove the corresponding large deviation principle. The associated rate function is the Modica-Mortola functional modified to take into account the entropy of the locations of the interfaces. Further, we provide the sharp asymptotics of the probability of having a given number of transitions between the two pure phases. In the supercritical regime, the measure does not longer concentrate and we show that the interfaces are asymptotically distributed according to a Poisson point process.
\end{abstract}

\keywords{Sharp interface limit, Metastability, Semiclassical spectral theory, Large deviations}
\subjclass[2020]
{82B24, % Interface problems; diffusion-limited aggregation arising in equilibrium statistical mechanics
%82B31, %Stochastic methods applied to problems in equilibrium statistical mechanics
81Q20, % Semiclassical techniques, including WKB and Maslov methods applied to problems in quantum theory
%60K35, % Interacting random processes; statistical mechanics type models; percolation theory
%81S20 %Stochastic quantization
60F10, %Large deviations 
60J60 %Diffusion processes
%81Q10, %Selfadjoint operator theory in quantum theory, including spectral analysis
%35J10, % Schrödinger operator, Schrödinger equation
%34L20%Asymptotic distribution of eigenvalues, asymptotic theory of eigenfunctions for ordinary differential operators 
%34L40, %Particular ordinary differential operators (Dirac, one-dimensional Schrödinger, etc.)
}
\maketitle
\thispagestyle{empty}

\section{Introduction}
\label{sec:1}

The van der Waals' theory of phase transition~\cite{vdw} is based on the
functional
\[
\mc F(\phi) = \int\! \rmd x \, \Big[ \frac 12  |\nabla \phi(x)|^2 +  W(\phi(x)) \Big],
\]
where the scalar field $\phi$ represents the local order parameter and $W$ is a smooth, symmetric, double well potential whose minimum value, chosen to be zero, is attained at $\pm 1$, interpreted as the pure phases of the system. We assume $W''(\pm 1)>0$. After the celebrated Modica-Mortola result~\cite{MM}, the so-called gradient theory of phase transitions essentially amounts to the analysis of the sharp interface limit of the free energy functional $\mc F$, see~\cite{Alberti} for a review. In the sequel, we restrict the discussion to the case $x\in [0,\ell ]$ with free boundary conditions at the endpoints and denote by $\mc F_\ell$ the corresponding free energy functional.

The above variational formulation of phase transitions does not take into account the microscopic fluctuations, which are relevant in  various phenomena. At the mesoscopic level, the effect of fluctuations can be encoded by considering the probability measure, on the space of order parameter profiles, informally given by
\begin{equation}
\label{muinf}
\rmd \mu_{\beta,\ell}  \propto \mc D \phi \, \exp\{- \beta \mc F_\ell(\phi)\}.
\end{equation}
Inspired by the paradigmatic case $W(\phi) = \frac 12(\phi^2-1)^2$, we refer to $\mu_{\beta,\ell}$ as the $\phi^4_1$ measure, where the subscript one stands for the space dimension and the superscript refers to the growth of $W$.  In general, the probability $\mu_{\beta,\ell}$ corresponds to the Euclidean version of the quantum anharmonic oscillator and it has been extensively analyzed since it exhibits an interesting behavior in a simple setting, see \cite{SimonFI} and references therein. 

The present purpose is to analyze the behavior of the probability $\mu_{\beta, \ell}$ in the joint limit $\beta, \ell \to \infty$. The relevant length scale is $\bar \ell \approx \exp\{\beta C_W\}$, where $C_W$ is the surface tension in the Van der Waals theory. Indeed, for $\ell \ll \bar \ell$ the typical behavior of the system is described by the pure phases $\phi_\pm = \pm 1$. In particular, due to the symmetry of the boundary conditions, in this regime $\mu_{\beta, \ell}$ converges to the convex combination of Dirac measures $\frac 12 \delta_{\phi_-} + \frac 12 \delta_{\phi_+}$. A relevant question is then the analysis of fluctuations. When $\ell$ grows sub-exponentially in $\beta$ these are captured by the Modica-Mortola functional. On the other hand, as discussed in~\cite{Oea}, when $\ell$ grows exponentially, the additional entropy induced by the large system size in~\eqref{muinf} cannot be neglected, so that the probability of a profile with $n$ interfaces is of the order $\ell^n \rme^{-n \beta C_W}$. We here complete this picture by showing that the probability $\mu_{\beta, \ell}$, when suitably rescaled, satisfies a large deviation principle with a modified rate functional that takes into account the additional entropic contribution. Within the regime $\ell \ll \bar \ell$, we also refine the result of~\cite{Oea} by obtaining the sharp asymptotics of the $\mu_{\beta, \ell}$-probability  of suitable neighborhoods of profiles with $n$ interfaces. As a preliminary step, we derive the sharp behavior of the length scale $\bar \ell$, that we characterize in terms of the spectral gap of the quantum anharmonic oscillator in the semiclassical limit. In particular, we show that $\bar \ell \propto \beta^{-1/2}  \exp\{\beta C_W\}$.

In contrast, in the regime $\ell \gg \bar \ell$, there is enough entropy so that the  $\mu_{\beta, \ell}$-probability of having at least one interface does not vanish. Accordingly, we show that the limiting location of the interfaces is distributed according to a Poisson point process. We refer to~\cite{COP, SchonTan} for the analogous statement in the context of one-dimensional lattice models.

We remark that the sharp asymptotic analysis of the probability measure $\mu_{\beta, \ell}$ presented here is a preliminary step for getting sharp estimates on the metastable behavior, in the joint limit $\beta, \ell \to \infty$, of the stochastic Allen-Cahn equation, whose unique stationary distribution if $\mu_{\beta, \ell}$. While for $\ell$ fixed and $\beta \to \infty$ the corresponding Eyring-Kramers formula has been proved in~\cite{BergGentz,BDGW,BrooksDG}, the case of diverging volume presents the typical features of sharp interface limits. We refer to~\cite{ReE} for a heuristic discussion and to~\cite{KimSeo} for the analogous problem in the context of lattice models.

\section{Notation and results}
\label{sec:2}

Given strictly positive sequences $a_n,b_n$, $n\in \bb N$, we use the standard asymptotic notation $a_n\sim b_n$ when  $a_n = b_n (1 + r_n)$, with $ \lim_{n}r_n =0$. Given two sequences $a_n(x)$, $b_n(x)$ depending on the parameter $x\in \mc X$, we say $a_n(\cdot) \sim b_n(\cdot)$ uniformly in $\mc X$ when $a_n (x) = b_n (x) [1 + r_n(x)]$, with $ \lim_{n} \sup_{x\in \mc X} |r_n(x) |=0$. For $p\in [1, \infty)$, $\delta>0$, and $\mc A \subset L^p((0,1))$, the $\delta$-neighborhood of $\mc A$ is denoted by  $\mc O^p_\delta(\mc A)$.

For the love of concreteness, we here stick to the case of the paradigmatic quartic choice of the double well potential, namely,
\begin{equation}
\label{W=}
W(\phi) := \frac 12 (\phi^2-1)^2.
\end{equation}
For the sake of readability, we however write the dependence on $W$ of the relevant constants in the general case of symmetric double well potential.
As the arguments in the proofs are based on the analysis of stochastic processes, we denote by $t$ the space variable in $[0, \ell]$ and by $s$ the rescaled variable in $[0,1]$. Correspondingly, we denote by $X$ the order parameter.

For $\ell>0$, we define the map $\imath_\ell \colon (0, \ell) \to (0,1) $ as the dilation $t\mapsto t/\ell$ and use the same notation for its lift to functions, i.e., $(\imath_\ell X )(s) = X(\ell s)$, $s\in (0,1)$. We then define the rescaled
free energy functional
\begin{equation}
\label{1.3}
F_\ell (X) := \mc F_{\ell} \big(\imath_\ell^{-1} (X) \big) = \int_0^1\!\rmd s \, \left[ \frac{1}{2\ell} \dot X_s^2  + \ell\, W(X_s) \right],
\end{equation}
that we regard as a lower semicontinuous functional on $L^p((0,1))$, $p\in[1,\infty)$, understanding that $F_\ell(X) = + \infty$ if $X$ does not belong to $H^1((0,1))$. By the celebrated Modica-Mortola result \cite{MM}, in the limit $\ell\to \infty$ the $\Gamma$-limit of the sequence $(F_\ell)_{\ell>0}$ is the functional $F\colon L^p((0,1))\to [0,+\infty]$ defined by
\begin{equation}
\label{F0}
F(X) = \begin{cases}  C_\mathrm{W} |S(X)| &  \textrm{if } X\in BV( (0,1);\{-1,1\})  \\ + \infty  & \textrm{otherwise}, \end{cases} 
\end{equation}
where, for $X\in BV( (0,1))$, the space of real-valued functions of bounded variation on $(0,1)$, we have denoted by $S(X)$ the \emph{jump set} of $X$ and by $|S(X)|$ its cardinality, see, e.g.,~\cite{Alberti}. Finally, $C_\mathrm{W}$ is the surface tension in the van der Waals theory, given by
\begin{equation}
\label{tensup}
C_\mathrm{W}:= \int_{-1}^1\!\rmd x\, \sqrt{2W(x)}.
\end{equation}
In particular, $C_\mathrm{W} =  4/3$ for the choice~\eqref{W=} of the double well potential.

We next define precisely the $\phi^4_1$ measure on the interval $(0,\ell)$ with inverse temperature $\beta$ and free boundary conditions, informally introduced in~\eqref{muinf}. Let $D^2_\mathrm{N}$ be the realization of the second derivative in $L^2((0,\ell))$ with Neumann boundary condition. For $\beta\in(0,\infty)$ consider the trace class operator $C_{\beta,\ell}= \beta^{-1} (- D^2_\mathrm{N} + 1)^{-1}$ and denote by $\mu^0_{\beta,\ell}$ the Gaussian measure on $C\big([0,\ell]\big)$  with mean zero and covariance $C_{\beta,\ell}$.

Setting $W_0(x) := \frac 12 x^2$ and $\hat W := W -W_0$, the $\phi^4_1$
measure $\mu_{\beta,\ell}$ is then defined in terms of the Radon-Nykodym derivative with respect to $\mu^0_{\beta,\ell}$ by
\begin{equation}
\label{mud}
\rmd \mu_{\beta,\ell} = \frac 1{Z_{\beta,\ell}} \exp\Big\{ - \beta \int_{0}^\ell\! \rmd t\, \hat W(\cdot)  \Big\}\, \rmd\mu_{\beta,\ell}^0, \qquad Z_{\beta,\ell} =\mu_{\beta,\ell}^0 \Big( \rme^{- \beta\int_{0}^\ell \rmd t\, \hat W(\cdot)}   \Big).
\end{equation}
The present purpose is to investigate the behavior of the probability $\mu_{\beta,\ell}$ in the joint limit $\beta,\ell\to\infty$.  Let $\bar\ell$ be the typical length of the transition between the pure phases. As we show in Theorem~\ref{t:sht2}  below, it is given by
\begin{equation}
\label{barl}
\bar\ell= \bar\ell_\beta := \frac 2{A_\mathrm{W} \sqrt{\beta}} \exp\{\beta C_\mathrm{W}\},
\end{equation}
where
\begin{equation}
\label{AW}
A_\mathrm{W}:=  \frac{2\sqrt 2}{\sqrt \pi} \sqrt{W(0) \sqrt{W''(1)} } \exp\bigg\{  -  \int_0^1\! \rmd x\, \frac{W'(x) - \sqrt{2 W''(1)W(x)}}{ 2W(x)}\bigg\} .
\end{equation}
Since $W$ has a quadratic minimum for $x=1$, the function inside the integral in~\eqref{AW} is regular. For the standard choice~\eqref{W=}, $A_\mathrm{W} =  8 \sqrt{2/\pi}$. We refer, e.g., to \cite[Eq.~(4.5.22)]{HelfferLNM} for an equivalent definition of the constant $A_W$.

As we next discuss, the asymptotic behavior of $\mu_{\beta,\ell}$ is quite different in the regimes $\ell\ll\bar\ell$ and $\ell\gg\bar\ell$.

\subsection*{Large deviations in the regime $\boldsymbol{1\ll \ell \ll\bar\ell}$}

In this situation, by the symmetry of the $\phi^4_1$ measure with free boundary conditions with respect to $X\mapsto -X$, the probability $\mu_{\beta,\ell}$ concentrates on the pure phases $\pm 1$. Our first result establishes the corresponding large deviation principle. To this end, fix a sequence $(\ell_\beta)_{\beta>0}$ satisfying
\begin{equation}
\label{ipell}
\lim_{\beta\to \infty}  \beta^{-1}\ell_\beta =\infty \quad \textrm{ and } \quad \lim_{\beta\to\infty} \beta^{-1}   \log \ell_\beta = \alpha \in [0,C_\mathrm{W}).
\end{equation}
The first condition above is assumed for technical reasons and in fact the large deviation statement should hold whenever $\ell_\beta \to \infty$.

Recalling that the map $\imath_\ell$ is defined above~\eqref{1.3}, we shorthand $\imath_\beta=\imath_{\ell_\beta}$ and introduce the rescaled $\phi^4_1$ measure as
\begin{equation}
\label{1.10}
\mu_\beta := \mu_{\beta,\ell_\beta} \circ \imath_{\beta}^{-1},
\end{equation}
that we regard as a one-parameter family of probability measures on
$L^p((0,1))$, $p\in [1,\infty)$. For $\ell$ fixed, by the representation~\eqref{mud}, the Laplace-Varadhan lemma implies that the family $\left(\mu_{\beta,\ell}\right)_{\beta>0}$ satisfies a large deviations principle with speed $\beta$ and rate function $\mc F_\ell$. In view of the variational convergence of the sequence of functionals $(F_\ell)$, we then deduce that $\mu_\beta$ satisfies a large deviation principle with speed $\beta$ and rate function $F$ as in~\eqref{F0} when the limit $\ell\to\infty$ is taken after the limit $\beta\to\infty$. We here show that this statement holds whenever $\ell_\beta$ grows sub-exponentially in $\beta$. On the other hand, as first discussed in \cite{Oea}, if $\ell_\beta$ grows exponentially in $\beta$, namely $\alpha>0$ in~\eqref{ipell}, then an entropic effect does modify the rate function. More precisely, the modified rate function $I\colon L^p((0,1))\to [0,+\infty]$ is given by
\begin{equation}
\label{Imod}
I(X)  := \begin{cases} (C_\mathrm{W}-\alpha)   |S(X)| &  \textrm{if } X\in BV( (0,1);\{-1,1\}) \\ +\infty  & \textrm{otherwise.}
\end{cases}
\end{equation}
In view of~\eqref{ipell} and the entropy in the interfaces location, this rate function reflects the heuristic that the probability of $n$ interfaces is roughly $\ell_\beta^n \rme^{- n \beta C_\mathrm{W}}$.

\begin{theorem}
\label{mainthmldp}
Assume~\eqref{ipell} and fix $p\in[1,\infty)$. The family $\left(\mu_\beta\right)_{\beta>0}$ of probability measures on $L^p((0,1))$ satisfies a large deviation principle with speed $\beta$ and the good rate function $I$ in~\eqref{Imod}. Namely, $I$ has compact level sets and for each closed $\mc C\subset L^p((0,1))$ and open $\mc O\subset L^p((0,1))$,
\[
\varlimsup_{\beta\to\infty} \frac 1\beta \log \mu_{\beta} (\mc C) \le - \inf_{X\in \mc C} I(X), \qquad \varliminf_{\beta\to\infty} \frac 1\beta \log \mu_{\beta} (\mc O) \ge - \inf_{X\in \mc O} I(X).
\]
\end{theorem}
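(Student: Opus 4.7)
The plan is to establish the LDP by the standard recipe: exponential tightness in $L^p((0,1))$ together with matching local upper and lower bounds on $L^p$-neighborhoods of each $X_0$. The main structural tool is the transfer-operator representation of $\mu_{\beta,\ell_\beta}$: reading the spatial variable $t\in[0,\ell_\beta]$ as time, this measure is the law of a reversible Feynman-Kac process whose marginals at any partition $0=t_0<\cdots<t_n=\ell_\beta$ factorize through the semigroup $T_\tau:=\rme^{-\tau H_\beta}$ of the quantum anharmonic oscillator $H_\beta:=-\frac{1}{2\beta}\partial_x^2+\beta W(x)$. By Theorem~\ref{t:sht2}, $H_\beta$ has a tunnelling gap $\lambda_1-\lambda_0\sim 1/\bar\ell_\beta=\frac{A_\mathrm{W}\sqrt{\beta}}{2}\,\rme^{-\beta C_\mathrm{W}}$ between its lowest two eigenvalues, which are separated by an order-one gap from the rest of the spectrum; this quantitative spectral input drives the whole argument.

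Exponential tightness is obtained by dominating $\mu_\beta$ by $\rme^{-\beta F_{\ell_\beta}(X)}$ up to the partition function and exploiting $\Gamma$-convergence of $F_{\ell_\beta}$ to the Modica-Mortola functional $F$, whose sublevel sets are $L^p$-compact. For the local upper bound at $X_0\notin BV((0,1);\{-1,1\})$, standard Modica-Mortola lower semicontinuity forces $F_{\ell_\beta}(X)\to\infty$ on $\mc O^p_\delta(X_0)$ as $\delta\to 0$, recovering $-I(X_0)=-\infty$. At $X_0\in BV((0,1);\{-1,1\})$ with jumps $s_1<\cdots<s_k$, I would fix small disjoint windows $I_j:=[s_j-\eta,s_j+\eta]$ and apply the transfer-operator factorization to write $\mu_\beta(\mc O^p_\delta(X_0))$ (for $\delta=\delta(\eta)$ small enough) as a product of factors over the $I_j$ and the complementary intervals of constant sign. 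Projecting $T_\tau$ onto the two-dimensional subspace spanned by the lowest eigenfunctions of $H_\beta$ (the localized symmetric/antisymmetric combinations at the two wells), the complementary-interval factors contribute $1+o(1)$, while each transition-window factor is bounded above by $2\eta\,\ell_\beta(\lambda_1-\lambda_0)(1+o(1))=2\eta\,\rme^{-\beta(C_\mathrm{W}-\alpha)+o(\beta)}$. Letting $\beta\to\infty$ then $\eta\to 0$ yields the sought bound $-(C_\mathrm{W}-\alpha)k=-I(X_0)$.

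The lower bound is dual: for an open $\mc O\ni X_0\in BV((0,1);\{-1,1\})$ with $k$ jumps, I would restrict $\mu_\beta(\mc O)$ to the sub-event that $X$ effects exactly one transition inside each $I_j$ and remains close to $\pm 1$ on the complementary intervals. The same transfer-operator factorization, combined with a matching lower bound on the transition probability in each window, produces a product in which each window contributes at least $2\eta\,\rme^{-\beta(C_\mathrm{W}-\alpha)+o(\beta)}$, giving the matching $-(C_\mathrm{W}-\alpha)k$. The principal obstacle is precisely this sharp two-sided estimate on the single-window transition probability, uniformly over the endpoint boundary values of $X$: it requires the full strength of Theorem~\ref{t:sht2} (including the sharp prefactor $A_\mathrm{W}\sqrt{\beta}/2$), a careful expansion of the free-boundary vector $\id$ and of the conditioning densities in the basis of quasi-modes localized at $\pm 1$, and the order-one spectral separation above $\lambda_1$ in order to absorb contributions of the excited states $\lambda_2,\lambda_3,\ldots$ into negligible corrections on the relevant timescale $\tau=\eta\ell_\beta\sim\eta\beta^{-1/2}\rme^{\beta C_\mathrm{W}}$.
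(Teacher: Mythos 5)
Your overall architecture (exponential tightness plus matching local bounds, with the tunnelling gap of $H_\beta$ as the quantitative input) is reasonable, and the window/transfer-operator picture is close in spirit to how the paper proves the \emph{sharp} asymptotics of Theorem~\ref{theorem_sharp}. But as a proof of the LDP it has genuine gaps. The most serious one is that you repeatedly pass from statements about the rate functionals to statements about the measures. ``Dominating $\mu_\beta$ by $\rme^{-\beta F_{\ell_\beta}(X)}$ up to the partition function'' is not a rigorous operation: there is no reference Lebesgue measure on path space, \eqref{muinf} is only informal, and $\Gamma$-convergence of $F_{\ell_\beta}$ to $F$ gives no control on $\mu_\beta(\mc K^\mathrm{c})$. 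The same objection applies to your upper bound at $X_0\notin BV((0,1);\{-1,1\})$ via ``Modica--Mortola lower semicontinuity.'' Worse, in the regime $\alpha>0$ the functional $F$ is not even the right object: the entropy of interface locations, which is the whole point of the theorem, would make a naive $\rme^{-\beta F}$ domination false on sets containing profiles with many interfaces. The paper replaces these heuristics by actual probabilistic estimates obtained from the diffusion representation \eqref{rme}--\eqref{diff}: a super-exponential bound on $\sup_t|X_t|$ (Lemma~\ref{p:1}), on $\ell_\beta^{-1}\int_0^{\ell_\beta}W(X_t)\,\rmd t$ (Lemma~\ref{lemma_Pgrande}), and on the number of transitions of a discrete chain sampled at crossing times (Lemmata~\ref{superexp}--\ref{lemmanuovo}), fed into the Fr\'echet--Kolmogorov criterion and a min-max argument.

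The second gap is the factorization itself. An $L^p$-neighborhood of $m^{\pm,n}_{s_1,\dots,s_n}$ is not a cylinder event, so it does not factorize over your windows $I_j$ and their complements; reducing $L^p$-balls to events expressible through crossing times is a nontrivial step (it occupies Lemmata~\ref{t:5.7} and \ref{LemmaLB}, and needs the $\mc W_\delta\cap\mc X_R$ localization). Moreover the two-sided window estimate you defer to ``a careful expansion in quasi-modes, uniformly over boundary values'' is essentially Theorem~\ref{t:sht} combined with Theorem~\ref{efsplitting}, i.e.\ the hardest analytic content of the paper, and you have not indicated how to control the conditioning on the window endpoints. Note also that for the LDP this sharp machinery is more than is needed and requires the stronger hypothesis~\eqref{ipells}: under the weaker assumption~\eqref{ipell} the paper instead obtains the exponential cost $C_\mathrm{W}=2U(0)$ per transition from a Freidlin--Wentzell-type pathwise LDP for the diffusion (Proposition~\ref{LDPdiff}), and extracts the entropy factor $\ell_\beta^{\,n}$ by a Bonferroni argument over $\sim\ell_\beta/T$ translates of a fixed transition event rather than from the prefactor of the spectral gap. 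Finally, a small but real slip: the gap asymptotics $E_{1,\beta}-E_{0,\beta}\sim A_\mathrm{W}\sqrt\beta\,\rme^{-\beta C_\mathrm{W}}$ is Proposition~\ref{Prop_Semicl}~(ii), not Theorem~\ref{t:sht2}, which concerns the exponential limit law of the rescaled hitting time.
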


The choice of the free boundary conditions for the $\phi^4_1$ measure is not particularly relevant for the validity of the large deviation principle. The arguments used in proof of Theorem~\ref{mainthmldp} cover directly the case in which $\mu_{\beta,\ell}$ is the restriction to the interval $[0,\ell]$ of the infinite volume $\phi^4_1$ measure. It is also possible to deduce the large deviations for $\mu_{\beta,\ell}^{a,b}$, the $\phi^4_1$ measure with the Dirichlet boundary conditions $X_0=a$, $X_\ell=b$, $a,b\in\bb R$; in this case the rate function depends however on the signs of $a$ and $b$. We do not pursue this issue here and refer to the analysis in \cite{Oea}, in which it is considered the case of Dobrushin boundary conditions specified by the choice $a=-1$, $b=1$.

\subsection*{Sharp asymptotics in the regime $\boldsymbol{1\ll \ell \ll \bar\ell}$}

The next result describes the sharp asymptotics of the $\mu_\beta$ probability of neighborhoods of the family of profiles with $n$ transitions between the pure phases. More precisely, we show that this probability behaves as if the number of transitions were a Poisson random variable with parameter $\ell/\bar \ell$. To this end, we fix a sequence $(\ell_\beta)_{\beta>0}$ satisfying
\begin{equation}
\label{ipells}
\lim_{\beta\to\infty} \beta^{-2} \ell_\beta =\infty \quad \textrm{ and } \quad \lim_{\beta\to\infty} \frac{\ell_\beta}{\bar\ell_\beta}= 0.
\end{equation}
The first requirement is technical and an arbitrary power law growth on $\ell_\beta$ should suffice. On the other hand, when $\ell_\beta$ grows only logarithmically in $\beta$ other phenomena become relevant, we refer to the analysis in \cite{BBB} for the case of Dobrushin boundary conditions.

We start by introducing the family of profiles with $n$ transitions. For $n=0$ we set $m^{\pm,0} = \pm 1$. For $n\in \bb N$ and $0< s_1<\dots < s_n<1$, let $m^{\pm, n}_{s_1, \dots,s_n} \colon (0,1) \to \{-1,1\}$ be the profile with $n$ jumps at times $s_1,\dots, s_n$ defined by
\begin{equation}
\label{mn}
m^{\pm,n}_{s_1, \dots,s_n} = \pm  (-1)^{k+1} ,  \textrm{ on }  [s_{k-1},s_k ),  \  k=1,\dots, n+1,
\end{equation}
where we understand  $s_0=0$ and $s_{n+1}=1$. Let finally $\mc M_n$ be the manifold of the profiles in $BV((0,1);\{-1,1\})$ with $n$ jumps, i.e.,
\begin{equation}
\label{Mn}
\mc M_n = \mc M_n^+\cup \mc M_n^-,  \qquad   \mc M_n^{\pm } = \bigcup_{0<s_1<\dots< s_n<1} \{m^{\pm, n}_{s_1, \dots, s_n}\},
\end{equation}
where we understand $\mc M_0 =  \{ m^{+,0}, m^{-,0} \} $. We finally recall that $\mc O^p_\rho (\mc B)$ denotes the $\rho$-neighborhood of $\mc B\subset L^p((0,1))$, the

\begin{theorem}
\label{theorem_sharp}
Assume~\eqref{ipells}, fix $p\in [1, \infty)$ and $n\ge 0$. Let also  $(\rho_{k,\beta})_{\beta>0}$, $k=1,2$, be sequences satisfying $\lim_{\beta} (\sqrt \beta \wedge \beta^{1/p})\rho_{k,\beta} = \infty$, $k=1,2$, and $\lim_\beta \rho_{1,\beta} = 0$. Then, as $\beta \to \infty$,
\[
\mu_\beta\big( \mc O^p_{\rho_{2,\beta}} (\mc M_{n}) \setminus \mc O^p_{\rho_{1, \beta}}  (\mc M_{n-1})\big) \sim \frac 1{n!} \bigg(\frac{\ell_\beta}{\bar\ell_\beta}\bigg)^n = \frac {\big(\ell_\beta A_\mathrm{W} \sqrt \beta \big)^n}{ 2^n n! } \rme^{- n \beta C_\mathrm{W}}.
\]
\end{theorem}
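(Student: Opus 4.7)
\textbf{Proof plan for Theorem~\ref{theorem_sharp}.} The strategy is to reduce the $\phi^4_1$ measure, via Feynman--Kac and semiclassical spectral analysis, to a two-state Markov chain on $\{+,-\}$ with exponentially small jump rate $1/\bar\ell_\beta$; the Poisson law for the number of jumps in time $\ell_\beta$ then produces the claimed asymptotics. Let $H_\beta := -(2\beta)^{-1}\partial_t^2 + \beta W$ be the Schr\"odinger operator on $L^2(\bb R)$ underlying~\eqref{mud}, with eigenvalues $E_0<E_1<E_2\leq\cdots$ and eigenfunctions $\psi_n$. By the semiclassical analysis developed for Theorem~\ref{t:sht2}, the tunneling gap satisfies $\Delta := E_1 - E_0 = (2/\bar\ell_\beta)(1+o(1))$, whereas $E_n - E_0 \geq \tfrac{1}{2}\sqrt{W''(1)}\,(1+o(1))$ for every $n\geq 2$. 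Since $\ell_\beta/\beta^2 \to \infty$ by~\eqref{ipells}, matrix elements of $e^{-\ell_\beta H_\beta}$ on the subspace $\{\psi_n : n\geq 2\}$ are $O(e^{-c\ell_\beta})$, negligible compared to the target order $(\ell_\beta/\bar\ell_\beta)^n$.

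Within the two-dimensional subspace $\mathrm{span}\{\psi_0,\psi_1\}$, introduce the well-localized states $\psi_\pm := (\psi_0\pm\psi_1)/\sqrt 2$, which by standard Agmon/WKB bounds concentrate in $O(\beta^{-1/2})$-neighborhoods of $\pm 1$. After ground-state transformation, the truncated propagator coincides with the semigroup of a continuous-time Markov chain on $\{+,-\}$ with equal jump rate $\Delta/2 = 1/\bar\ell_\beta$ and uniform stationary distribution, consistent with the $X\mapsto -X$ symmetry of $\mu_{\beta,\ell_\beta}$. Using~\eqref{mud} together with spectral truncation and the identity $\int\psi_\pm\,dx = (\int\psi_0\,dx)/\sqrt 2$ (from $\psi_1$ being odd), one obtains for any partition $0\leq s_0<s_1<\cdots<s_N\leq 1$ and signs $\sigma_0,\ldots,\sigma_N\in\{+,-\}$ the asymptotic identity
\begin{equation*}
\mu_\beta\bigl(X_{s_i}\text{ lies in the }\sigma_i\text{-well for all }i\bigr) = \frac{1+o(1)}{2^{N+1}}\prod_{i=0}^{N-1}\bigl[1 + \sigma_i\sigma_{i+1}\,e^{-(s_{i+1}-s_i)\ell_\beta\Delta}\bigr].
\end{equation*}
Since $\ell_\beta\Delta \to 0$ by~\eqref{ipells}, expanding the product to leading order, summing over sign patterns with exactly $n$ flips, and integrating over the $n$-simplex of flip locations yields $(n!)^{-1}(\ell_\beta\Delta/2)^n = (n!)^{-1}(\ell_\beta/\bar\ell_\beta)^n$, which agrees with a Poisson$(\ell_\beta/\bar\ell_\beta)$ count of transitions along the chain.

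The remaining step is to identify the symbolic-dynamics event with the geometric event $\mc O^p_{\rho_{2,\beta}}(\mc M_n) \setminus \mc O^p_{\rho_{1,\beta}}(\mc M_{n-1})$. The localization of $\psi_\pm$ implies that a typical trajectory under $\mu_{\beta,\ell_\beta}$, conditioned to be in the $+$ (resp.\ $-$) well at a given time, fluctuates around the pure phase by $O(\beta^{-1/2})$ pointwise and $O(\beta^{-1/p})$ in $L^p$; the interface profile, given by the Modica--Mortola standing kink, has spatial width $O(\ell_\beta^{-1})$ in the rescaled variable $s$ and is therefore also negligible in $L^p$. The assumptions $\sqrt\beta\,\rho_{k,\beta}\to\infty$ and $\beta^{1/p}\rho_{k,\beta}\to\infty$ are exactly what is needed for a typical $n$-transition trajectory to land in $\mc O^p_{\rho_{2,\beta}}(\mc M_n)$ with probability $1+o(1)$, while $\rho_{1,\beta}\to 0$ excludes configurations with short excursions into the opposite well from $\mc O^p_{\rho_{1,\beta}}(\mc M_{n-1})$. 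The principal technical difficulty, which will occupy the bulk of the proof, is the quantitative control of this geometric-to-symbolic correspondence: one has to bound away both ``false positives'' (configurations in $\mc O^p_{\rho_{2,\beta}}(\mc M_n)$ whose symbolic transition count differs from $n$) and ``false negatives'' (configurations with $n$ symbolic transitions that lie outside $\mc O^p_{\rho_{2,\beta}}(\mc M_n)$ because of an atypically wide interface or a large excursion within a well), uniformly on the scales imposed on $\rho_{k,\beta}$, by combining Agmon-type decay for $\psi_0,\psi_1$ with a Modica--Mortola rigidity estimate for interface profiles.
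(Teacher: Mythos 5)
Your high-level picture --- truncate to the two-dimensional eigenspace $\mathrm{span}\{\psi_0,\psi_1\}$, identify the reduced dynamics with a symmetric two-state chain of rate $1/\bar\ell_\beta$, and read off a Poissonian transition count --- is the right heuristic, but the proposal stops short of a proof at exactly the two places where the work actually lies, and in one of them the tool you invoke is not strong enough. First, to get the sharp $(1+o(1))$ in front of $(\ell_\beta/\bar\ell_\beta)^n/n!$, you need pointwise control on the difference between the first two eigenfunctions at the bottom of the wells of the form $\psi_0\,|\psi_0-\psi_1|\lesssim \beta^{3/2}(E_1-E_0)$, which is Theorem~\ref{efsplitting} of the paper. ``Standard Agmon/WKB bounds'' on the localization of $\psi_\pm=(\psi_0\pm\psi_1)/\sqrt 2$ only give $L^2$-type or crude $L^\infty$ information (cf.\ Proposition~\ref{Prop_Semicl}~(v)), and the relative error these produce in the two-state-chain identification is $O(1)$, not $o(1)$; without an estimate of the strength of Theorem~\ref{efsplitting} the prefactor in your asymptotic identity for the multi-time marginal cannot be established. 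Second, the step you flag as ``the principal technical difficulty'' --- passing from symbolic occupation data to the $L^p$-ball events $\mc O^p_{\rho_{2,\beta}}(\mc M_n)\setminus\mc O^p_{\rho_{1,\beta}}(\mc M_{n-1})$ --- is genuinely most of the proof, and your finite-dimensional-marginal setup makes it harder rather than easier: the $L^p$-ball events are path events, and a fixed finite collection of sample times cannot determine them, while sending the mesh $N\to\infty$ would require uniform control of the spectral-truncation error on intervals that shrink below the relaxation scale $T_\beta\sim\beta^{3/2}$, which fails.

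The paper sidesteps both issues by working on path space from the start: it uses the diffusion representation~\eqref{rme}, introduces explicit stopping times $\tau_k$ (successive hitting times of $0$ separated by $T_\beta$) with counters $\mc Z$ (returns) and $\mc N$ (sign changes), and proves the sharp asymptotics of $\mu_{\beta,\ell_\beta}(\mc Z\ge n)$, $\mu_{\beta,\ell_\beta}(\mc N\ge n)$, $\mu_{\beta,\ell_\beta}(\mc N=n,\mc B_{n,\delta_\beta})$ (Proposition~\ref{prop_sharp_2}) by induction on $n$ from the sharp hitting-time estimate of Theorem~\ref{t:sht}, which in turn is where Theorem~\ref{efsplitting} enters. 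The geometric-to-symbolic reconciliation is then a pair of inclusion lemmas (Lemmata~\ref{lemma_inclusione} and~\ref{lemma_inclusione2}) that use only a priori estimates on the path --- the quantitative energy bound $\frac1{\ell_\beta}\int W(X_t)\,\rmd t\le(1+\gamma)/\beta$ from Lemma~\ref{lemma_Pgrande}~(ii) (this is precisely why the hypothesis on $\rho_{k,\beta}$ has the form $(\sqrt\beta\wedge\beta^{1/p})\rho_{k,\beta}\to\infty$), the boundedness estimate from Lemma~\ref{p:1}, and the cap $\mc Z\le z$. If you want to make your spectral-reduction route rigorous, you would in effect be forced to reintroduce the same stopping-time structure and the same fine eigenfunction estimate; so while the scheme is conceptually aligned with the paper's intuition, it is not yet a proof and is missing the key lemma (Theorem~\ref{efsplitting}) that makes the prefactor accessible.
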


In the above statement, for $n=0$, it is understood that $\mc O^p_{\rho_{1,\beta}} (\mc M_{-1}) = \emptyset$. The condition $\rho_{1, \beta} \to 0$ is required to allow enough entropy on the interfaces location to pick the prefactor $\ell^n$. The condition $\rho_{k,\beta}  \gg \beta^{-1/2} \vee \beta^{-1/p} $, $k=1,2$, is required to accommodate for the typical fluctuations of the probability $\mu_\beta$ in $L^p((0,1))$, see Lemma~\ref{lemma_Pgrande} (ii).

As in the case of Theorem~\ref{mainthmldp}, the arguments used to deduce the sharp asymptotics cover directly the case in which $\mu_{\beta,\ell}$ is the restriction to the interval $[0,\ell]$ of the infinite volume $\phi^4_1$ measure. In contrast, the analysis cannot be applied to the probability $\mu_{\beta,\ell}^{a,b}$ with Dirichlet boundary conditions, as the crucial ingredient provided by Theorem~\ref{t:sht} cannot be applied. 

\subsection*{The regime $\boldsymbol{\ell \gg\bar\ell}$} 

To describe the asymptotic behavior of the $\phi^4_1$ measure in this regime, it will be convenient to regard the probability $\mu_{\beta,\ell}$ defined on the interval $[-\ell/2,\ell/2]$ rather than $[0,\ell]$. We further consider $\mu_{\beta,\ell}$ as a probability on $C(\bb R)$ by understanding $X_t=X_{-\ell/2}$ for $t< -\ell/2$ and $X_t=X_{\ell/2}$ for $t> \ell/2$.

Fix a sequence $(\ell_\beta)_{\beta>0}$ such that
\begin{equation}
\label{ipell3}
\lim_{\beta\to\infty} \frac{ \ell_\beta - \bar\ell_\beta }{\beta} = +\infty.
\end{equation}
We here consider the rescaled  $\phi^4_1$ measure defined by
\[
\bar \mu_\beta := \mu_{\beta,\ell_\beta}\circ \imath_{\bar\ell_\beta}^{-1} .
\]
For $p\in[1,\infty)$, we regard $(\bar \mu_\beta)_{\beta>0}$ as a sequence of probability measures on $L^p_\mathrm{loc}(\bb R)$. As we next state, this sequence converges weakly to some not trivial probability measure. In order to describe the limiting probability, we consider the continuous-time Markov chain with state space $\{-1, 1\}$ and generator 
\[
\bar L = \begin{pmatrix} -1  &  +1  \\  +1   &  -1 \end{pmatrix}.
\]
We denote by $\bar \mu$ the law of the stationary process associated to $\bar L$, regarded as a probability measure on $L^p_\mathrm{loc}(\bb R)$. Equivalently, given the family $\Sigma = \left(s_k\right)_{k\in\bb Z}$ satisfying $s_k< s_{k+1}$, $k\in \bb Z$, and $\lim_{k\to\pm \infty}s_k =\pm \infty$, let
$m^\pm_\Sigma$ be the element of $L^p_\mathrm{loc}(\bb R)$ defined by
\[
m^\pm_\Sigma  =\pm \sum_{k\in \bb Z} (\pm1)^k \id_{[s_k,s_{k+1})} .
\]
Denoting by ${\bar\mu}^\pm$ the probability on $L^p_\mathrm{loc}(\bb R)$ concentrated on $ m^\pm_\Sigma$ when $\Sigma$ is sampled according to a Poisson point process on $\bb R$ with intensity one, then $\bar \mu = \frac 12 \bar \mu^+ + \frac 12 \bar \mu^-$.

\begin{theorem}
\label{t:thm3}
Assume~\eqref{ipell3} and fix $p\in[1,\infty)$.  As $\beta\to \infty$ the sequence $(\bar \mu_\beta)_{\beta>0}$ of probability measures on $L^p_\mathrm{loc}(\bb R)$  converges weakly to $\bar\mu$.
\end{theorem}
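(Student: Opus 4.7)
The strategy is to reduce, in the joint limit $\beta,\ell_\beta\to\infty$, the multipoint distributions of $\mu_{\beta,\ell_\beta}$ to those of an effective two-state Markov chain, realised on the two lowest eigenstates of the Schr\"odinger operator $H_\beta$ underlying the Feynman--Kac representation of the $\phi^4_1$ measure. Denote its eigenvalues by $E_0<E_1\le E_2\le\dots$ and normalised eigenfunctions by $\psi_0,\psi_1,\dots$. The semiclassical ingredients already developed for Theorem~\ref{t:sht2} supply: (i) the tunnelling asymptotics $\bar\ell_\beta (E_1-E_0)\to 2$ as $\beta\to\infty$; (ii) a uniform positive lower bound on the non-tunnelling gap $E_2-E_0$; (iii) the ``well'' functions $\psi_\pm:=(\psi_0\pm\psi_1)/\sqrt 2$ concentrate on $\pm 1$, so that, for every bounded continuous $f$, $\lp\psi_\pm, f\psi_\pm\rp\to f(\pm 1)$ and $\lp\psi_+, f\psi_-\rp\to 0$.

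Fix $n\ge 1$, points $s_1<\dots<s_n$ in $\bb R$ and bounded continuous $f_1,\dots,f_n$. Using the transfer-operator representation of $\mu_{\beta,\ell_\beta}$ on $[-\ell_\beta/2,\ell_\beta/2]$, the rescaled multipoint expectation reads
\[
\bar\mu_\beta\Bigl(\prod_{i=1}^n f_i(X_{s_i})\Bigr) = \frac{\lp \id,\, U_{a_0} M_{f_1} U_{a_1} M_{f_2}\cdots M_{f_n} U_{a_n} \id\rp}{\lp \id,\, U_{\ell_\beta} \id\rp},
\]
where $U_t=\rme^{-tH_\beta}$, $M_{f_i}$ is multiplication by $f_i$, $a_0=\ell_\beta/2+\bar\ell_\beta s_1$, $a_i=\bar\ell_\beta(s_{i+1}-s_i)$ for $1\le i\le n-1$, and $a_n=\ell_\beta/2-\bar\ell_\beta s_n$. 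By (ii) together with~\eqref{ipell3}, each $U_{a_i}$ is approximated, up to super-exponentially small errors in $\beta$, by its restriction to the two-dimensional subspace $V:=\mathrm{span}\{\psi_0,\psi_1\}$. In the $\psi_\pm$ basis, the rescaled propagator on $V$ reads
\[
U_{\bar\ell_\beta s}\big|_V = \frac{\rme^{-\bar\ell_\beta s E_0}}{2}\begin{pmatrix} 1+\rme^{-s\bar\ell_\beta(E_1-E_0)} & 1-\rme^{-s\bar\ell_\beta(E_1-E_0)}\\ 1-\rme^{-s\bar\ell_\beta(E_1-E_0)} & 1+\rme^{-s\bar\ell_\beta(E_1-E_0)}\end{pmatrix},
\]
which by (i) converges to $\rme^{-\bar\ell_\beta s E_0}\rme^{s\bar L}$; by (iii), $M_{f_i}\big|_V$ converges in the same basis to $\mathrm{diag}(f_i(1),f_i(-1))$. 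The parity $X\mapsto -X$ forces $\lp \id,\psi_k\rp=0$ for odd $k$, so after the long boundary propagators $U_{a_0},U_{a_n}$ only the symmetric ground state contributes at the endpoints, producing the stationary vector $(1,1)/\sqrt 2$ of $\bar L$; the resulting $\rme^{-\ell_\beta E_0}\lp\id,\psi_0\rp^2$ prefactor cancels against the leading term of the partition function $\lp\id,U_{\ell_\beta}\id\rp$. Collecting the pieces yields exactly $\bar\mu\bigl(\prod_i f_i(\sigma_{s_i})\bigr)$, proving convergence of the finite-dimensional distributions.

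To upgrade this to weak convergence in $L^p_\mathrm{loc}(\bb R)$, I would establish tightness of $(\bar\mu_\beta)$ in that topology through a Riesz--Kolmogorov argument: the $L^p$-moment bound $\sup_\beta \bar\mu_\beta\bigl(\int_{-T}^T|X_s|^p\,\rmd s\bigr)<\infty$ follows from Gaussian concentration of $\mu_{\beta,\ell}$ (in the spirit of Lemma~\ref{lemma_Pgrande}), while the equi-integrability of translates $\lim_{h\to 0}\sup_\beta \bar\mu_\beta\bigl(\int_{-T}^T|X_{s+h}-X_s|^p\,\rmd s\bigr)=0$ reflects the fact that $\bar\mu_\beta$-typical profiles are close to $\{\pm 1\}$-valued step functions whose interface layers have width $O(1/\bar\ell_\beta)$ in rescaled units. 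Tightness plus finite-dimensional convergence, extended by dominated convergence to the smeared linear functionals $X\mapsto \int g\, X\,\rmd s$, $g\in C_c(\bb R)$, which separate probability measures on $L^p_\mathrm{loc}(\bb R)$, then identifies the subsequential limit as $\bar\mu$. The main technical obstacle is the spectral bookkeeping in the two-state reduction: one must control the contributions from the higher \emph{symmetric} eigenstates at the two endpoints, which are not eliminated by parity, and verify that the $\beta$-dependent prefactors recombine exactly into the rate-one semigroup $\rme^{s\bar L}$; for this the sharp asymptotic $\bar\ell_\beta(E_1-E_0)\to 2$ provided by Theorem~\ref{t:sht2} is essential.
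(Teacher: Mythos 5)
Your argument is essentially correct, but it takes a genuinely different route from the paper's. The paper works entirely on the path-space side: by the first part of Lemma~\ref{lemma_staz0} it replaces $\mu_{\beta,\ell_\beta}$ by the stationary diffusion law $\bb P^\beta_{\pi_\beta}$, shows (Step~1) that on $[0,R\bar\ell_\beta]$ the path is $L^p$-close to the step process $\sum_k \mathrm{sign}(X_{T_\beta+\sigma_k})\id_{[\sigma_k,\sigma_{k+1})}$ built from the stopping times $\sigma_k$, and then (Step~2) uses the strong Markov property together with the exponential limit law of Theorem~\ref{t:sht2} to show that the sign-change times of that step process converge to a rate-one Poisson point process; no tightness argument is needed because the comparison with the step process is itself an $L^p_\mathrm{loc}$ approximation. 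Your transfer-operator computation of the finite-dimensional distributions is instead the natural analogue of the transfer-matrix argument for one-dimensional lattice models in~\cite{COP,SchonTan}: the $2\times 2$ propagator in the $\psi_\pm$ basis is literally the heat semigroup of $\bar L$ after rescaling time by $\bar\ell_\beta$, the parity identity $\int\psi_1=0$ in~\eqref{p01bb} correctly reduces the free endpoints to the stationary vector, and the required inputs (items (i), (ii), (v) of Proposition~\ref{Prop_Semicl} and the control of $\|\psi_k\|_{L^1}$ from Lemma~\ref{traceformula2}) are all available. What your route buys is a transparent identification of the generator $\bar L$; what it costs is a separate tightness argument in $L^p_\mathrm{loc}$ and the identification of the limit through the smeared functionals $X\mapsto\int g\,f(X)\,\rmd s$, whereas the paper's coupling with the step process delivers both at once.

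Two points in your sketch still require real work. First, tightness: a Riesz--Kolmogorov/Prokhorov argument needs a uniform-in-$h$ control of the translate modulus on a high-probability compact set (as in Lemma~\ref{PropTightness}), not merely the expectation bound you state; to obtain it one needs a uniform bound on the number of sign changes per unit rescaled length, which is precisely the renewal estimate encoded in Theorems~\ref{t:sht} and~\ref{t:sht2}, combined with the fluctuation bounds of Lemmata~\ref{p:1} and~\ref{lemma_Pgrande} --- in other words, the same pathwise machinery the paper deploys in its Step~1, so your spectral shortcut does not avoid it. Second, the boundary propagators: under the bare hypothesis~\eqref{ipell3} the ratio $\ell_\beta/\bar\ell_\beta$ need not diverge, so for fixed $s_1$ the time $a_0=\ell_\beta/2+\bar\ell_\beta s_1$ is guaranteed to be positive and large only for $s_1$ in the window $(-\ell_\beta/(2\bar\ell_\beta),\ell_\beta/(2\bar\ell_\beta))$; outside it the path is frozen at its endpoint value and your formula must be modified. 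This restriction is shared by the paper's appeal to Lemma~\ref{lemma_staz0}, so it is not a defect specific to your approach, but it should be made explicit.
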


In contrast to Theorem~\ref{theorem_sharp}, the choice of free boundary condition for $\mu_{\beta,\ell}$ is not relevant for Theorem~\ref{t:thm3}. The same limiting probability is obtained for any, $\ell$-independent, choice of the boundary conditions or directly for the infinite volume $\phi^4_1$ measure. We also note that in the critical case $\ell_\beta=\bar\ell_\beta$, Theorem~\ref{t:thm3} still holds with the following modifications. The probability $\bar\mu_\beta$ is regarded as a probability on $L^p((-1/2,1/2))$ and $\bar\mu$ is the marginal on the interval $(-1/2,1/2)$ of the probability described above.

As can be easily checked, the statement of Theorem~\ref{t:thm3} does not hold in the Skorohod topology. The convergence to the limiting measure $\bar \mu$ is a generic feature of bistable reversible diffusions in the small noise limit~\cite{Landim,PeletierSav,ReSe}. The choice of the $L^p$ topology on the space of paths appears however novel. In the context of the  one-dimensional Kac-Ising model, the statement analogous to Theorem~\ref{t:thm3} has been proven in~\cite{COP}, see also~\cite{SchonTan} for the one-dimensional nearest neighbors Ising model.

\subsection*{Outline of proofs and organization of the paper}

As well known \cite{SimonFI}, the infinite volume $\phi^4_1$ measure is the law of the stationary process associated to the one-dimensional reversible diffusion described by the stochastic differential equation 
\begin{equation}
\label{diff}
\rmd X_t = -\frac 1\beta \big(\log \psi_{0, \beta}\big)'(X_t) \rmd t + \sqrt{\beta^{-1}} \rmd w_t,
\end{equation} 
where $w$ is a standard Brownian on $\bb R$ and $\psi_{0, \beta}$ is the ground state of the quantum anharmonic oscillator with Hamiltonian $H_\beta =  - \frac{1}{2\beta} \frac{\rmd^2}{ \rmd x^2} + \beta W$. As we detail below, also the finite volume $\phi^4_1$ measure with free boundary conditions can be represented in terms of the solutions to~\eqref{diff}.

The present analysis is based upon sharp estimates for the metastable behavior of the solution to~\eqref{diff} in the small noise limit, which are the content of Section~\ref{sec:4}. As first exploited in~\cite{JMS}, these estimates can be deduced from the sharp semiclassical analysis of the Schr\"odinger operator $H_\beta$, here developed in Section~\ref{sec:3}, by applying the ground state transformation. In particular, as $\beta \to \infty$, the typical length $\bar \ell_\beta$ can be characterized as twice the inverse of the spectral gap of $H_\beta$. In contrast to the standard setting of reversible diffusions, here the potential in the Schr\"odinger operator does not depend on $\beta$, while the drift $- \beta^{-1}(\log \psi_{0, \beta})'$ depends on the semiclassical parameter and exhibits a discontinuity at the saddle point in the limit $\beta\to \infty$.

The proof of Theorem~\ref{mainthmldp}, given in Section \ref{sec:5}, will be achieved by combining the large deviation asymptotics of the solution to~\eqref{diff} over a fixed time window with the behavior of the discrete time Markov chain that is defined by sampling $X$ at the transition times between the pure phases. In this respect, some of the arguments are similar to those used in~\cite[Chapter 4]{FW}.

Theorems~\ref{theorem_sharp} and ~\ref{t:thm3} are proved in Section~\ref{sec:6} and \ref{sec:7}, respectively. For both of them the sharp asymptotics, that includes the prefactor of the transition time for the diffusion~\eqref{diff}, is relevant. More precisely, Theorem~\ref{t:thm3} essentially follows from the convergence in law of the suitably rescaled transition time to an exponential random variable. For Theorem~\ref{theorem_sharp}, it will be instead relevant to deduce the asymptotic probability of having $n$ transitions between the pure phases in the time interval $[0, \ell]$. As stated above, the latter point will be derived from the sharp semiclassical analysis of the quantum anharmonic oscillator with potential $W$ given in Section~\ref{sec:3}. This includes, as a novel point, a uniform control on the difference between the first two eigenfunctions in terms of the spectral gap, see Theorem~\ref{efsplitting}.

\section{Sharp semiclassical analysis of the anharmonic oscillator}  
\label{sec:3}

Within this section we denote by $L^p=L^p(\bb R)$, $p\in[1,\infty]$, and by $H^1=H^1(\bb R)$ the standard Lebesgue and Sobolev spaces on $\bb R$. The inner product in $L^2$ is denoted by $\langle\cdot,\cdot\rangle$. 

Let $H=H_\beta$ be the selfadjoint Schr\"odinger operator on $L^2$ which on its core $C^2_\mathrm{c}(\bb R)$ is given by
\begin{equation}
\label{He}
H_\beta  = -\frac {1} {2\beta} \frac{\rmd^2}{\rmd x^2}  + \beta W.
\end{equation}
Since $W$ has compact level sets, $H_\beta$ has purely discrete spectrum. We denote by $E_{k} = E_{k,\beta}$, $k \in \bb Z_+$, the corresponding eigenvalues enumerated in increasing order and by $\psi_k = \psi_{k, \beta}$, $k \in \bb Z_+$, a corresponding orthonormal basis of eigenfunctions. By the symmetry of $W$, $\psi_k(x) = (-1)^k \psi_k(-x)$.  In the sequel, we choose the ground state $\psi_0$ to be positive and $\psi_1$ to be positive on $\bb R_+$.

We set
\begin{equation}
\label{4.1}
U_\beta = -  \frac 1\beta \log \psi_{0,\beta}
\end{equation}
and observe that, as follows from a direct computation, it satisfies the Riccati equation
\begin{equation}
\label{riccati}
\frac 12 (U_\beta')^2 -\frac{1}{2\beta} U_\beta'' = W -  \frac 1 \beta E_{0,\beta}.
\end{equation}
Let also
\begin{equation}
\label{augdis}
U(x) =  \min \bigg\{  \bigg|\int_{-1}^x \!  \rmd  y \, \sqrt{2W(y)}  \bigg|, \bigg| \int_{1}^x  \!  \rmd  y \,  \sqrt{2W(y)}  \bigg| \bigg \}
\end{equation}
be the Agmon distance of $x$ from the two wells $-1$ and $1$. In particular, recalling~\eqref{tensup}, $C_\mathrm{W}= 2 U(0)$. For the special choice of the double-well potential~\eqref{W=}, $U(x) = \frac 13 |1-|x|^2|(2+|x|)$.

Let $\bar E_k:= (k +\frac 12) \sqrt{W''(\pm 1)}= 2k+1$, ${k\in \bb Z_+}$, be the eigenvalues of the harmonic oscillators centered in $\pm 1$ defined by the Hamiltonians $-\frac 1{2\beta} \frac{\rmd^2}{\rmd x^2} + \frac{\beta}2 W''(\pm 1) (x\mp1)^2$ and denote by
\[
g_{\pm} (x) = \Big(\beta\pi^{-1}\sqrt{W''(\pm 1)}\Big)^{1/4} \exp\Big\{ - \frac 12 \beta \sqrt{W''(\pm 1)} \, \big(x \mp 1\big)^2  \Big\}
\]
the corresponding ground states. Set finally
\[
g_0 = \frac{1}{Z_{0, \beta}} \frac{1}{\sqrt 2} \big( g_+ +   g_-  \big),
\qquad g_1 = \frac{1}{Z_{1, \beta}} \frac{ 1}{ \sqrt 2} \big( g_+  -   g_-  \big) , 
\]
where $Z_{k,\beta}$ is chosen so that $\|g_k\|_{L^2} = 1$, $k=0,1$. Note that $g_0$ and $g_1$ are respectively even and odd. Furthermore, $Z_{k,\beta} \sim 1$, $k=0,1$.

In the next statement we summarize the properties of $H_\beta$ that are relevant for the present analysis. Recall that the constant $A_\mathrm{W}$ has been introduced in~\eqref{AW}.

\begin{proposition}
\label{Prop_Semicl}
$~$
\begin{itemize}
\item[(i)]For each $k\in \bb Z_+$,
\[
\lim_{\beta \to \infty}   E_{k, \beta}      =  
\begin{cases}
\bar E_{k/2} =k+1   & \text{ if } k \text{ is even}, \\
\bar E_{(k-1)/2} =k   & \text{ if } k \text{ is odd}.
\end{cases}
\]
\item[(ii)] As $\beta \to \infty$,
\[
E_{1, \beta} - E_{0, \beta}
\sim A_\mathrm{W}  \sqrt \beta \,  \rme^{-\beta C_\mathrm{W}}.
\]
\item[(iii)] Uniformly on compact sets, $U_\beta \to U$ as $\beta \to \infty$.
\item[(iv)]There exist $R,\gamma\in (0,\infty)$ such that, for any
$\beta \ge 1$ and $|x| \ge R$,
\[
| \psi_k(x)| \le \rme^{- \beta \gamma |x|}, \qquad  k=0,1.
\]
\item[(v)]
There exists $C\in (0,\infty)$ such that, for any $\beta \ge 1$,
\[
\| \psi_k - g_k\|^2_{L^2}   \le C\beta^{-1}, \qquad  \| \psi_k - g_k\|^2_{L^\infty} \le C, \qquad  k=0,1.
\]
\end{itemize}
\end{proposition}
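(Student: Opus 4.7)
The plan is to apply the standard semiclassical machinery for Schr\"odinger operators with symmetric double-well potentials, with $\beta$ playing the role of $h^{-1}$, and to closely follow the tunneling analysis of Helffer--Sj\"ostrand as presented in~\cite{HelfferLNM}. I would address the items in the order (iv), (i), (v), (iii), (ii), since the Agmon decay of (iv) and the harmonic bounds underlying (i) feed into the proofs of the other items.

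For (iv), multiplying $H_\beta \psi_k = E_{k,\beta}\psi_k$ by $\rme^{2\beta\phi}\psi_k$ with a Lipschitz weight $\phi$ satisfying $|\phi'|^2 \le (1-\delta)\, 2W$ outside a neighborhood of $[-1,1]$, and integrating by parts, yields the Agmon estimate $\int \rme^{2\beta\phi}\psi_k^2\,\rmd x \le C$, uniformly in $\beta$, once one knows $E_{k,\beta}=O(1)$ by the min-max principle applied with harmonic quasimodes. Choosing $\phi(x) = \gamma(|x|-R)_+$ with $\gamma$ small and $R$ large, and converting the weighted $L^2$ estimate into a pointwise bound via elliptic regularity, gives the required exponential decay. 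For (i), the upper bound comes from testing the quadratic form of $H_\beta$ with cutoff symmetric and antisymmetric combinations of harmonic-oscillator eigenfunctions centered at $\pm 1$, the cubic remainder of $W$ contributing $o(1)$ to the Rayleigh quotient. The matching lower bound follows from an IMS localization subordinate to the two wells and their complement: in each well $H_\beta$ is bounded below by the corresponding harmonic oscillator up to $o(1)$, while in the complement $W\ge c>0$ gives a contribution of order $\beta$, well above any fixed $k+1$. For (v), computing $\|(H_\beta-1)\tilde g_j\|_{L^2}$ with $\tilde g_j = (g_+ + (-1)^j g_-)/\sqrt{2}$ yields a cubic-remainder contribution of order $\beta^{-1/2}$ plus exponentially small cross-terms. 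Since by (i) the pair $\{E_{0,\beta},E_{1,\beta}\}$ is separated from the rest of the spectrum by a gap of order one, the spectral projection onto $\mathrm{span}\{\psi_0,\psi_1\}$ applied to $\tilde g_j$ differs from $\tilde g_j$ by $O(\beta^{-1/2})$ in $L^2$; parity then separates the components, and after renormalization one obtains $\|\psi_k - g_k\|_{L^2}^2 \le C\beta^{-1}$. The $L^\infty$ bound follows by combining the $L^2$ estimate with an $H^1$-type energy estimate and a Gagliardo--Nirenberg interpolation. For (iii), $U_\beta = -\beta^{-1}\log\psi_{0,\beta}$ satisfies~\eqref{riccati}; the Agmon estimate gives $|U_\beta'|^2 \le 2W + o(1)$, so the family $(U_\beta)$ is equi-Lipschitz on compact subsets, and any uniform subsequential limit is a viscosity solution of $\tfrac 12(U_\infty')^2 = W$. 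Item (v) pins $U_\infty(\pm 1)=0$, identifying $U_\infty = U$.

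The main obstacle is item (ii). I would exploit symmetry and a Wronskian identity: since $\psi_0$ is even with $\psi_0'(0)=0$ and $\psi_1$ is odd with $\psi_1(0)=0$, integrating $\psi_1 H_\beta \psi_0 - \psi_0 H_\beta \psi_1$ over $(-\infty,0]$ yields the exact relation
\[
(E_{1,\beta} - E_{0,\beta}) \int_{-\infty}^0 \psi_0\, \psi_1 \, \rmd x \;=\; -\frac{1}{2\beta}\, \psi_0(0)\, \psi_1'(0).
\]
Item (v) shows that the integral on the left converges to $-1/2$. The remaining task is to compute the boundary values $\psi_0(0)$ and $\psi_1'(0)$ sharply; the Gaussian approximation from (v) underestimates them, since they are of order $\rme^{-\beta C_\mathrm{W}/2}$ rather than $\rme^{-\beta}$. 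I would therefore construct a sharp WKB approximation on the classically forbidden region $(0,1)$ via the ansatz $\psi_k(x) \sim \tfrac{1}{\sqrt 2}\, a(x)\, \rme^{-\beta\Phi(x)}$ with $\Phi'(x) = \sqrt{2W(x)}$ and $a$ satisfying an explicit first-order transport equation whose solution is $a(x) \propto (2W)^{-1/4}\exp\{-\tfrac12\int (W' - \sqrt{2W''(1)W})/(2W)\}$; matching to the Gaussian $g_+$ near $x=1$ fixes the normalization of $a$, and evaluation at $x=0$ produces precisely the constant $A_\mathrm{W}$ of~\eqref{AW}. Controlling the WKB remainders uniformly down to $x=0$, and ensuring that the Gaussian errors from (v) near $x=1$ do not spoil the matching, is the technically delicate step; it is carried out in~\cite[Ch.~4]{HelfferLNM}, whose formula (4.5.22) is cited as equivalent to~\eqref{AW}.
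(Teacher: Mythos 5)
Your proposal is correct and follows essentially the same route as the paper: for item (v) — the only part the paper actually proves (it cites \cite{HS1,Simon1}, \cite{Harrell1,Harrell2,HelfferLNM}, and \cite{Simon2} for (i)--(iv)) — you use exactly the paper's argument, namely that the quasimode residual $\|(H_\beta-\bar E_0)g_k\|_{L^2}=O(\beta^{-1/2})$ combined with the $O(1)$ spectral gap above $\{E_0,E_1\}$ and parity yields the $L^2$ bound, with a Sobolev/interpolation step for the $L^\infty$ bound. Your sketches of (i)--(iv) and of the Harrell-type Wronskian plus WKB computation for (ii) are the standard arguments contained in the references the paper invokes.
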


We refer to~\cite{HS1,Simon1} for the proof of item (i),
to~\cite{Harrell1,Harrell2,HelfferLNM} for (ii), to \cite{Simon2} for (iii) and (iv). For completeness, the proof of (v) will be detailed after a preliminary lemma.  

For $R >0$ we denote by $H^R = H^R_\beta$ the selfadjoint Schr\"odinger operator $- \frac{1}{2\beta} \frac{\rmd^2}{\rmd x^2} + \beta W$  on $L^2((-R, R))$ with Dirichlet boundary conditions at the endpoints. We understand $H^R= H$ when $R= \infty$.  We denote by $E_0^R = E_{0, \beta}^R$ the bottom of the spectrum of $H^R$ and, for $c\ge E_0^R$, by $\id_{[c, \infty)} (H^R)$ the spectral projector associated to $H^R$ on the interval $[c, \infty)$.

\begin{lemma} 
\label{Chebyshev}  
Fix $R\in (0, \infty]$.
\begin{itemize} 
\item[(i)] Let $0\le E \le E_0^R < c $ and $f$ be in the form domain of $H^R$. Then, for any $\beta > 0$,
\[   
\| \id_{[c, \infty)} (H^R) f\|^2_{L^2((-R, R))} \le \frac{1}{c-E}  \| (H^R-E)^{\frac 12}  f \|^2_{L^2((-R, R))}.  
\]
\item[(ii)]
Let $0\le E < c $ and $f$ be in the domain of $H^R$. Then, for any $\beta > 0$,
\[
\| \id_{[c, \infty)} (H^R) f\|^2_{L^\infty((-R, R))}  \le \frac{1}{c-E}  \left(2\beta + \frac{ 2\beta E + 1 }{c-E} \right)   \|(H^R-E) f\|^2_{L^2((-R, R))} .   
\]
\end{itemize}
\end{lemma}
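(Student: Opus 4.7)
For part (i), the plan is the standard Chebyshev--Markov argument in spectral form. Let $(P_\lambda)_{\lambda\ge 0}$ be the spectral family of the non-negative self-adjoint operator $H^R$ and set $\mu_f(d\lambda) := d\langle P_\lambda f, f\rangle$. Since $E \le E_0^R$, the support of $\mu_f$ is contained in $[E,\infty)$, so $\lambda - E \ge 0$ on the support and $\|(H^R-E)^{1/2} f\|^2 = \int (\lambda-E)\, \mu_f(d\lambda)$ makes sense for $f$ in the form domain. For $\lambda \ge c$ one has $1\le (\lambda-E)/(c-E)$; integrating against $\mu_f$ on $[c,\infty)$ gives the claim.

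For part (ii), set $u := \id_{[c,\infty)}(H^R)f$, which lies in the domain of $H^R$ because $f$ does. The plan is to combine a one-dimensional Sobolev embedding with two instances of the Markov-type bound from (i). First, since $u$ satisfies Dirichlet boundary conditions at the endpoints (vanishing at infinity when $R=\infty$), the identity $u(x)^2 = 2\int_{-R}^x u\, u'\,\rmd y$ together with Cauchy--Schwarz and the AM--GM inequality yields
\[
\|u\|_{L^\infty}^2 \le 2\|u\|_{L^2}\|u'\|_{L^2} \le \|u\|_{L^2}^2 + \|u'\|_{L^2}^2.
\]
Second, to control $\|u'\|_{L^2}^2$, I would write the quadratic form
\[
\tfrac{1}{2\beta}\|u'\|_{L^2}^2 = \langle H^R u, u\rangle - \beta \int W|u|^2 \le \langle H^R u, u\rangle,
\]
using $W\ge 0$, and split $\langle H^R u, u\rangle = \langle (H^R-E)u, u\rangle + E \|u\|_{L^2}^2$.

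It then remains to bound each of $\|u\|_{L^2}^2$ and $\langle (H^R-E)u, u\rangle$ in terms of $\|(H^R-E)f\|_{L^2}^2$. Both are Chebyshev estimates on $[c,\infty)$: for $\lambda \ge c > E$ one has $1 \le (\lambda-E)^2/(c-E)^2$ and $\lambda - E \le (\lambda-E)^2/(c-E)$, so integrating against $\mu_f$ on $[c,\infty)$ gives
\[
\|u\|_{L^2}^2 \le \frac{1}{(c-E)^2}\|(H^R-E)f\|_{L^2}^2,
\qquad
\langle (H^R-E)u,u\rangle \le \frac{1}{c-E}\|(H^R-E)f\|_{L^2}^2.
\]
Substituting into $\|u'\|_{L^2}^2 \le 2\beta \langle (H^R-E)u,u\rangle + 2\beta E\|u\|_{L^2}^2$ and plugging into the Sobolev bound yields exactly the prefactor $\frac{1}{c-E}\bigl(2\beta + \frac{2\beta E+1}{c-E}\bigr)$ stated in the lemma.

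There is no real obstacle: the argument is a clean coupling of a 1D Sobolev embedding with spectral Chebyshev bounds. The only delicate points are to use the non-negativity of $W$ to drop the potential term (which is why the estimate is stated under Dirichlet conditions on $H^R$ and with $W\ge 0$) and to track the powers of $(c-E)$ so that the two spectral bounds align with the AM--GM-split Sobolev inequality; the choice $\eta = 1$ in the split $2ab \le a^2+b^2$ is precisely what produces the additive form of the prefactor.
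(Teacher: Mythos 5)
Your proof is correct and follows essentially the same route as the paper: part (i) is the spectral Chebyshev bound, and part (ii) combines the 1D Sobolev inequality $\|u\|_{L^\infty}^2\le\|u\|_{L^2}^2+\|u'\|_{L^2}^2$ with the quadratic-form identity (dropping $W\ge 0$) and two spectral Chebyshev estimates on $[c,\infty)$, exactly as in the paper's computation. The bookkeeping of the powers of $(c-E)$ matches the stated prefactor, so nothing is missing.
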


\begin{proof}
The first statement follows directly from the spectral theorem and the Che\-byshev inequality. To prove (ii), let $g= \id_{[c, \infty)} (H^R) f $ and denote by $(P_\lambda)_{\lambda\ge 0}$ the projection-valued measure associated to $H^R$. Since $W\ge 0$,
\begin{align*}
& \| g\|^2 _{H_0^1((-R, R))} \le  2\beta \int_{-R}^R\! \rmd x\,  \left[ \frac{1}{2\beta} (g')^2 + (\beta W - E) g^2 \right] +   (2\beta E +1  ) \|g\|^2_{L^2((-R, R))} \\ & \qquad =  2\beta   \int_c^\infty (\lambda - E) \rmd \langle g, P_\lambda g\rangle_{L^2((-R, R))} +   (2\beta E +1  )  \int_c^\infty  \rmd \langle g, P_\lambda g\rangle_{L^2((-R, R))} \\ &  \qquad\le   \frac{2\beta}{c-E} \| (H^R-E) g\|^2_{L^2((-R, R))} +  \frac{2\beta E +1 }{(c-E)^2} \| (H^R-E) g\|^2_{L^2((-R, R))}  \\ &  \qquad \le \frac{2\beta}{c-E} \| (H^R-E) f\|^2_{L^2((-R, R))} +  \frac{2\beta E +1 }{(c-E)^2} \| (H^R-E) f\|^2_{L^2((-R, R))}.
\end{align*}
The conclusion follows by the Sobolev embedding. 
\end{proof}

\begin{proof}[Proof of~Proposition~\ref{Prop_Semicl} (v)]
Fix $k=1,2$ and set $R_k =   g_k - \lp g_k, \psi_k\rp \psi_k $ Observe that by symmetry $R_k$ is orthogonal to both $\psi_0$ and $\psi_1$. Further, since $\lp g_k, \psi_k \rp \ge 0$, 
\[
| \lp g_k, \psi_k \rp -1 | = \bigg|\frac{ \lp g_k, \psi_k \rp^2 -1}{\lp g_k, \psi_k \rp +  1 } \bigg| \le \|R_k\|_{L^2}^2.
\]
Hence
\[
\|g_k- \psi_k\|_{L^2}   \le      \|R_k\|_{L^2}   + \|R_k\|^2_{L^2}
\]
and
\begin{equation}
\label{3.65}
\| g_k -\psi_k\|_{L^\infty} \le \|R_k\|^2_{L^2}
\|\psi_k\|_{L^\infty} +   \|R_k\|_{L^\infty}. 
\end{equation}

To prove the first bound we apply Lemma~\ref{Chebyshev} (i) with $R=\infty$, $E= \bar E_0=1$, $f=g_k$ and $c\in (\bar E_0, \bar E_1)$ so that $R_k = \id_{[c, \infty)} (H) g_k $. We deduce 
\[   
\|R_k\|^2_{L^2} \le   \frac{1}{c-\bar E_0}  \|(H- \bar E_0)^{\frac 12} g_k\|_{L^2}^2. 
\]
By a direct computation, 
\[
\|(H- \bar E_0)^{\frac 12} g_{\pm} \|^2_{L^2} = \lp (H- \bar E_0) g_{\pm} , g_{\pm} \rp_{L^2}   = \beta \int \! \rmd x\, \big(\tfrac 12 x^4 \pm 2x^3 \big) \sqrt{\frac{2\beta}{\pi}} \rme^{-2\beta x^2}.
\]
It follows that  there is a constant $C$ independent of $\beta$ such that $ \|(H- \bar E_0)^{\frac 12} g_k \|^2_{L^2} \le C/\beta$, which concludes the proof of the first bound. 

To prove the second bound we apply Lemma~\ref{Chebyshev} (ii) again with $R=\infty$, $E= \bar E_0=1$, $f=g_k$  and $c\in (\bar E_0, \bar E_1)$. We deduce 
\[   
\|R_k\|^2_{L^\infty} \le  \frac{1}{c-\bar E_0}  \left(2\beta + \frac{ 2\beta \bar E_0 + 1 }{c-\bar E_0)} \right)   \|(H-\bar E_0) g_k\|^2_{L^2} . 
\]
By a direct computation,
\[
\|(H- \bar E_0)g_{\pm} \|^2_{L^2} = \beta^2 \int \! \rmd x\,
\big(\tfrac 12 x^4 \pm 2x^3 \big)^2 \sqrt{\frac{2\beta}{\pi}}
\rme^{-2\beta x^2} .
\]
It follows that there is a constant $C$ independent of $\beta$ such that  $ \|(H- \bar E_0) g_k \|^2_{L^2} \le C/\beta$. Further,  by the Sobolev embedding and Proposition~\ref{Prop_Semicl} (i),  $\|\psi_k\|_{L^\infty} \le C  \sqrt{\beta}$. We conclude recalling~\eqref{3.65}.
\end{proof}

The estimates stated below are readily deduced from Proposition~\ref{Prop_Semicl} (iv) and the first bound in Proposition~\ref{Prop_Semicl} (v).

\begin{corollary}
\label{t:3.2}
Let $I_{\pm}$ be neighborhoods of $\pm1$ such that $\mp1$ does not belong to the closure of  $I_\pm $. Then, for $k=0,1$, as
$\beta \to \infty$,
\begin{gather}
\label{p01aa}
\int_{ I_{+}}\!\rmd x\, \psi_{k,\beta} (x) \sim \left(\frac{\pi}{2\beta}\right)^{1/4}, \qquad \int_{I_{-}}\! \rmd x\, \psi_{k,\beta} (x) \sim  (-1)^k \left(\frac{\pi}{2\beta}\right)^{1/4},   \\ \label{p01cc} \pm \int_{ I_{\pm}}\!\rmd x\, \psi_{0,\beta} (x) \psi_{k,\beta} (x) \sim \frac 12.
\end{gather}
Moreover,
\begin{equation}
\label{p01bb} 
\int_{\bb R}\!\rmd x\, \psi_{0,\beta} (x) \sim  2 \left(\frac{\pi}{2\beta}\right)^{1/4}, \qquad \int_{\bb R}\!\rmd x\, \psi_{1,\beta}(x) =0. 
\end{equation}
\end{corollary}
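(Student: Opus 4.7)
The strategy is to replace $\psi_{k,\beta}$ by the explicit quasi-mode $g_k$ throughout, using the $L^2$ bound in Proposition~\ref{Prop_Semicl}(v), and then to reduce every integral to elementary Gaussian computations. On any \emph{bounded} measurable set $A\subset \bb R$, the Cauchy--Schwarz inequality gives
\[
\Big| \int_A\!\rmd x\, (\psi_k - g_k)\Big| \le |A|^{1/2}\, \|\psi_k - g_k\|_{L^2} \le C\, |A|^{1/2}\, \beta^{-1/2},
\]
and analogously, for products, $|\int_A (\psi_0\psi_k - g_0 g_k)|\le \|\psi_0+g_0\|_{L^2}\|\psi_k-g_k\|_{L^2}+\|g_k\|_{L^2}\|\psi_0-g_0\|_{L^2} = O(\beta^{-1/2})$. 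Since the leading order in~\eqref{p01aa}--\eqref{p01bb} is $O(\beta^{-1/4})$ and in~\eqref{p01cc} is $O(1)$, this error is always negligible.

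Next, I compute the Gaussian integrals. Directly from the definition of $g_\pm$,
\[
\int_{\bb R}\! \rmd x \, g_{\pm}(x) = \big(\beta\pi^{-1}\sqrt{W''(\pm 1)}\big)^{1/4} \sqrt{\tfrac{2\pi}{\beta\sqrt{W''(\pm 1)}}} = \sqrt 2\,\pi^{1/4}\big(\beta\sqrt{W''(\pm 1)}\big)^{-1/4},
\]
which for $W''(\pm 1)=4$ reduces to $(2\pi/\beta)^{1/4}$. Moreover, $\int_{\bb R} g_{\pm}^2 =1$ by construction and, for any neighborhood $I_\pm$ of $\pm 1$ not containing $\mp 1$, the tails
\[
\int_{I_\pm^\mathrm{c}} g_{\pm}(x)\,\rmd x, \qquad \int_{I_{\pm}} g_{\mp}(x)\, \rmd x, \qquad \int_{I_{\pm}} g_+(x)\,g_-(x)\,\rmd x
\]
are all $O(\beta^{1/2}\,\rme^{-c\beta})$ for some $c>0$ depending only on the distance of $\pm 1$ from $I_\pm^\mathrm{c}$ (resp.\ from $\mp 1$). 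Hence $\int_{I_\pm} g_\pm \sim \int_{\bb R} g_\pm$ and $\int_{I_\pm} g_\pm^2 \to 1$, while all cross contributions can be dropped.

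Combining these two inputs with the definitions of $g_0,g_1$ and $Z_{k,\beta}\sim 1$,
\[
\int_{I_+}\!\rmd x\, g_k = \frac{1}{Z_{k,\beta}\sqrt 2} \int_{I_+} g_+ + O(\beta^{1/2}\rme^{-c\beta}) \sim \frac{1}{\sqrt 2}\, (2\pi/\beta)^{1/4} = (\pi/(2\beta))^{1/4},
\]
while for $I_-$ the only change is that $g_-$ replaces $g_+$ and the sign flip in $g_1$ yields the factor $(-1)^k$; this proves \eqref{p01aa}. The claim~\eqref{p01cc} is derived in the same way by expanding $g_0 g_k = \frac{1}{2 Z_{0}Z_{k}}(g_+^2 \pm g_-^2) + (\text{cross terms})$, which on $I_+$ equals $g_+^2/(2Z_0 Z_k) + o(1) \to 1/2$, and on $I_-$ yields $\pm g_-^2/(2 Z_0 Z_k) \to \pm 1/2$ in accordance with the sign convention.

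For~\eqref{p01bb}, the only extra step is the extension to $\bb R$. I would split
\[
\int_{\bb R}\!\rmd x \,(\psi_k - g_k) = \int_{-R}^R\!\rmd x \,(\psi_k-g_k) + \int_{|x|>R}\!\rmd x\,\psi_k - \int_{|x|>R}\!\rmd x\, g_k,
\]
using Cauchy--Schwarz on the first term to get $O(\beta^{-1/2})$ and the decay bound Proposition~\ref{Prop_Semicl}(iv) together with the explicit Gaussian decay of $g_k$ to bound the remaining two tails by $\rme^{-c\beta}$. Then $\int\psi_0 \sim \int g_0 = \sqrt 2(2\pi/\beta)^{1/4} = 2(\pi/(2\beta))^{1/4}$, and $\int \psi_1 = 0$ by parity.

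The main subtlety is that Proposition~\ref{Prop_Semicl}(v) only controls $\psi_k-g_k$ in $L^2$, which is not sufficient on an unbounded domain; this is exactly where the $L^\infty$-type decay~(iv) is needed in order to handle the integral over $\bb R$ in~\eqref{p01bb}. Once this has been observed, everything else reduces to bookkeeping on Gaussians.
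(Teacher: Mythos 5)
Your proof is correct and follows exactly the route the paper intends: the paper itself gives no detailed argument, only the remark that the corollary is ``readily deduced from Proposition~\ref{Prop_Semicl}~(iv) and the first bound in Proposition~\ref{Prop_Semicl}~(v)'', and replacing $\psi_k$ by the explicit quasi-modes $g_k$ via the $L^2$ estimate in~(v), doing the Gaussian bookkeeping, and using the decay bound~(iv) for the tails is exactly that argument. The Gaussian computations, the role of $Z_{k,\beta}\sim 1$, and the parity argument for $\int_{\bb R}\psi_1=0$ are all right.

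One small point to tighten: the hypothesis does not assume the neighborhoods $I_\pm$ are bounded, so the Cauchy--Schwarz step ``$|\int_A(\psi_k-g_k)|\le|A|^{1/2}\|\psi_k-g_k\|_{L^2}$'' as you state it covers \eqref{p01aa} only when $I_\pm$ is bounded. The exact splitting you already carry out for \eqref{p01bb} (restrict to $[-R,R]$ and use Proposition~\ref{Prop_Semicl}~(iv) on the tails) handles the general case, so the fix is already in your write-up — it should simply be invoked for \eqref{p01aa} as well, not only for \eqref{p01bb}. For \eqref{p01cc} this is not even an issue, since your bound on $\int_A(\psi_0\psi_k-g_0g_k)$ uses $L^2$-norms over all of $\bb R$; note though that in that display the first factor should be $\|\psi_0\|_{L^2}$ rather than $\|\psi_0+g_0\|_{L^2}$ if you decompose $\psi_0\psi_k-g_0g_k=\psi_0(\psi_k-g_k)+(\psi_0-g_0)g_k$ — a harmless slip since either way you get $O(\beta^{-1/2})$.
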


The semigroup $\left(\rme^{-t (H-E_0)}\right)_{t\ge 0}$ admits a smooth kernel $g_t(x,y)= g^\beta_t(x,y)$, $t>0$, $x, y\in \bb R$, that can be represented as
\begin{equation}
\label{seriesg}
g_t(x,y) = \sum_{k=0}^\infty \rme^{- t (E_k - E_0)} \psi_k (x) \psi_k(y).
\end{equation}
Given $\ell$ and $T\in [0, \ell/2)$, let $\wp^{T} = \wp^{T}_{\beta, \ell} $ be the probability measure on $\bb R^2$ with density $\varrho^{T}=\varrho^{T}_{\beta,\ell}$ given by
\begin{equation}
\label{defwpT}
\varrho^{T} (x,y) = \frac 1{Z_\ell} \int\! \rmd a\, \rmd b  \,  g_T(a,x) g_{\ell-2T}(x,y) g_T(y,b), \quad Z_\ell = \int\! \rmd a\, \rmd b  \, g_{\ell}(a,b). 
\end{equation}
When $T=0$ we write $\wp$ for $\wp^{T}$ and $\varrho$ for $\varrho^T$ so that $\varrho(x,y) = Z_\ell^{-1} g_{\ell}(x,y)$. As discussed in \cite{SimonFI}, $\wp^{T}$ is the law of $(X_T, X_{\ell-T})$ when $X$ is sampled according to $\mu_{\beta, \ell}$. As we next prove, when $T\gg \beta$ the probability $\wp^{T}$ is well approximated by the probability $\bar\wp^{T}$ whose density  $\bar\varrho^{T}=\bar\varrho^{T}_{\beta,\ell}$ is given by
\begin{equation}
\label{defbarwpT}
\bar\varrho^T(x,y) =    \psi_0(x) g_{\ell-2T}(x,y)  \psi_0(y).
\end{equation}
Note that the probability $\bar\wp^T$ is the marginal at the times $T$ and $\ell-T$ of the law of the infinite volume $\phi^4_1$ measure.

Let also $\pi^T=\pi^T_\beta$ be the probability on $\bb R$ with density $q^T=q^T_\beta$ given by
\begin{equation}
\label{qT}
q^T(x)  =    \frac{1}{\psi_0(0)} g_T( 0, x) \psi_0(x),
\end{equation}
which is the marginal at time $T$ of the $\phi^4_1$ measure on the half-line $[0, \infty)$ with boundary  condition $X_0=0$. As we next prove, when $T\gg \beta$ the probability $\pi^{T}$ is well approximated by the probability $\pi=\pi_\beta$ with density $\psi_0^2$, that is the single time marginal of the infinite volume $\phi^4_1$ measure.

In the next statement we denote by $\|\cdot\|_\mathrm{TV}$ the total
variation norm on the space of finite signed measures.

\begin{proposition}
\label{sharpwp}
$~$
\begin{itemize}
\item[(i)]
There exists $\ell_0>0$ such that if $\varliminf_\beta \ell_\beta \ge \ell_0$ then the sequence of probability measures $( \wp_{\beta, \ell_\beta})_{\beta>0}$ is exponentially tight as $\beta\to \infty$. Namely, there exists a sequence of compacts $K_R\subset\subset\bb R^2$ such that
\[
\lim_{R\to \infty} \varlimsup_{\beta \to \infty}   \frac 1\beta \log  \wp_{\beta, \ell_\beta} ( K_R^{\mathrm{c}}) =   -\infty.
\]
\item[(ii)]
Assume that $T_\beta$ satisfies $\lim_\beta \beta^{-1}T_{\beta} =\infty$ and $\ell_\beta$ be such that $\ell_\beta> 2T_\beta+1$. Then
\[
\lim_{\beta \to \infty} \frac 1\beta \log \|\wp^{T_\beta}_{\beta,\ell_\beta} - \bar{\wp}^{T_\beta}_{\beta,\ell_\beta} \|_{\mathrm{TV}} = - \infty.    
\]
\item[(iii)]
Assume that $T_\beta$ satisfies $\lim_\beta \beta^{-1}T_{\beta} =\infty$. Then
\[
\lim_{\beta \to \infty} \frac 1\beta \log  \| \pi^{T_\beta}_\beta  - \pi_\beta\|_{\mathrm{TV}} = - \infty. 
\]  
\end{itemize}
\end{proposition}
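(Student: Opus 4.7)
The plan is to rely on the spectral expansion $g_t(x,y)=\sum_k \rme^{-t(E_k-E_0)}\psi_k(x)\psi_k(y)$ of the heat kernel, together with two structural facts recorded in Proposition~\ref{Prop_Semicl} and Corollary~\ref{t:3.2}: (a)~by the parity $\psi_k(-x)=(-1)^k\psi_k(x)$, one has $\int\psi_1=0$ and, more generally, $\psi_k(0)=0$ for every odd $k$, so in the expansions of $\phi_T(x):=\int g_T(a,x)\,\rmd a$ and of $g_T(0,x)$ the entire first excited mode is absent; (b)~the gap to the next even eigenvalue $E_2-E_0$ tends to $2$ as $\beta\to\infty$, hence is uniformly bounded below by a constant $c>0$ for $\beta$ large. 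Combined with the assumption $T_\beta/\beta\to\infty$ in items (ii)--(iii), these produce remainders of order $\rme^{-cT_\beta}$, beating the $\beta$-polynomial or even fixed-exponential factors that appear elsewhere.

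\textbf{Item (i).} Take $K_R=[-R,R]^2$ and by symmetry reduce to $\wp_{\beta,\ell_\beta}(|x|>R)=Z_{\ell_\beta}^{-1}\int_{|x|>R}\phi_{\ell_\beta}(x)\,\rmd x$. Write $\phi_{\ell_\beta}=c_0\psi_0+r_{\ell_\beta}$, with $c_0=\int\psi_0$ and $r_{\ell_\beta}=\sum_{k\ge 2}\rme^{-\ell_\beta(E_k-E_0)}c_k\psi_k$ (the $k=1$ term absent by (a)); likewise $Z_{\ell_\beta}=c_0^2+\tilde r_{\ell_\beta}$. Standard heat-semigroup estimates yield $\|r_{\ell_\beta}\|_{L^2}$ and $|\tilde r_{\ell_\beta}|$ of order $\rme^{-c\ell_\beta}$ times a $\beta$-polynomial, negligible for $\ell_\beta$ bounded below. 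The leading contribution to the tail is $c_0\int_{|x|>R}\psi_0/Z_{\ell_\beta}$, which by Proposition~\ref{Prop_Semicl}~(iv) and $c_0\sim 2(\pi/2\beta)^{1/4}$ is bounded by $C\beta^{-3/4}\rme^{-\beta\gamma R}$. Hence $\beta^{-1}\log\wp_{\beta,\ell_\beta}(K_R^c)\le -\gamma R+o(1)$, yielding exponential tightness.

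\textbf{Item (ii).} From the factorization $\varrho^T(x,y)=Z_\ell^{-1}\phi_T(x)g_{\ell-2T}(x,y)\phi_T(y)$ and $\bar\varrho^T(x,y)=\psi_0(x)g_{\ell-2T}(x,y)\psi_0(y)$, split $\phi_T=c_0\psi_0+r_T$ as above and decompose $\varrho^T-\bar\varrho^T$ into the $(c_0^2/Z_\ell-1)\psi_0\otimes\psi_0$ term, the two linear terms $\psi_0\otimes r_T$ and $r_T\otimes\psi_0$, and the quadratic $r_T\otimes r_T$, each multiplied by $g_{\ell-2T}$. The key identities are $\int\psi_0(x)g_{\ell-2T}(x,y)\,\rmd x=\psi_0(y)$, coming from $\rme^{-(\ell-2T)(H-E_0)}\psi_0=\psi_0$, which reduces the linear integrals to $\int|r_T|\psi_0\le\|r_T\|_{L^2}$, and $\|\rme^{-(\ell-2T)(H-E_0)}\|_{L^2\to L^2}\le 1$, which controls the quadratic one by $\|r_T\|_{L^2}^2$. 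Combining $\|r_T\|_{L^2}^2\le C\sqrt\beta\,\rme^{-cT}$, $|Z_\ell-c_0^2|\le C\sqrt\beta\,\rme^{-c\ell}$, and $Z_\ell^{-1}\sim c_0^{-2}\sim\sqrt\beta$ gives $\|\wp^{T_\beta}_{\beta,\ell_\beta}-\bar\wp^{T_\beta}_{\beta,\ell_\beta}\|_{\mathrm{TV}}\le C\sqrt\beta\,\rme^{-cT_\beta}$, which implies the claim since $T_\beta\gg\beta$.

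\textbf{Item (iii) and the main obstacle.} Since $\pi^T(x)-\pi(x)=\psi_0(0)^{-1}\psi_0(x)\,r(x)$ with $r(x):=g_T(0,x)-\psi_0(0)\psi_0(x)=\sum_{k\ge 2,\,k\text{ even}}\rme^{-T(E_k-E_0)}\psi_k(0)\psi_k(x)$ (odd $k$ drop because $\psi_k(0)=0$), one has $\|\pi^T-\pi\|_{\mathrm{TV}}\le\|r\|_{L^2}/(2\psi_0(0))$, and $\|r\|_{L^2}^2\le\rme^{-(2T-1)(E_2-E_0)}g_1(0,0)$. The comparison $H\ge -(2\beta)^{-1}\partial_x^2$ yields $g_1(0,0)\le\rme^{E_0}\sqrt{\beta/(2\pi)}\le C\sqrt\beta$, so $\|\pi^T-\pi\|_{\mathrm{TV}}\le C\beta^{1/4}\psi_0(0)^{-1}\rme^{-cT}$. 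The delicate point is that $\psi_0(0)^{-1}$ is exponentially large in $\beta$: by Proposition~\ref{Prop_Semicl}~(iii) and~\eqref{4.1}, $\beta^{-1}\log\psi_0(0)^{-1}=U_\beta(0)\to U(0)=C_\mathrm{W}/2$, so $\beta^{-1}\log\|\pi^{T_\beta}_\beta-\pi_\beta\|_{\mathrm{TV}}\le C_\mathrm{W}/2+o(1)-c\,T_\beta/\beta$. The hard part is seeing that this tends to $-\infty$, which crucially uses both the hypothesis $T_\beta\gg\beta$ and the strict positivity of the limiting gap $E_2-E_0\to 2$.
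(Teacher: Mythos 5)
Your treatments of items (ii) and (iii) are correct and essentially coincide with the paper's: both rest on the vanishing of the $k=1$ mode (from $\int\psi_1=0$, resp.\ $\psi_1(0)=0$), the uniform lower bound on $E_2-E_0$, and $\ell^1$-type control of the tail of the spectral sum as in Lemma~\ref{traceformula2}. Your use of $\rme^{-(\ell-2T)(H-E_0)}\psi_0=\psi_0$ together with the $L^2$-contractivity of the semigroup in (ii), and of the free-heat-kernel domination $g_1(0,0)\le C\sqrt\beta$ in (iii), are clean substitutes for the paper's Cauchy--Schwarz/Sobolev manipulations; the balance $\beta^{-1}\log\psi_0(0)^{-1}\to C_\mathrm{W}/2$ versus $T_\beta(E_2-E_0)/\beta\to\infty$ in (iii) is exactly the paper's point.

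Item (i), however, has a genuine gap. After reducing to the marginal $Z_{\ell}^{-1}\int_{|x|>R}\phi_{\ell}(x)\,\rmd x$ and splitting $\phi_{\ell}=c_0\psi_0+r_{\ell}$, you apply the pointwise decay of Proposition~\ref{Prop_Semicl}~(iv) only to the $\psi_0$ term and dismiss the remainder via a \emph{global} bound $\|r_{\ell}\|\le \mathrm{poly}(\beta)\,\rme^{-c\ell_\beta}$. But the hypothesis is only $\varliminf_\beta\ell_\beta\ge\ell_0$, so $\rme^{-c\ell_\beta}$ is merely a constant: the remainder contributes a term of order $Z_\ell^{-1}\|r_{\ell}\|_{L^1}\approx\sqrt\beta\,\rme^{-c\ell_0}$ that does not depend on $R$, whence $\varlimsup_\beta\frac1\beta\log$ of it is $0$, and the outer limit $R\to\infty$ cannot produce $-\infty$. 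What is needed is that $\int_{|x|>R}|r_{\ell}|$ be bounded by $\rme^{-\beta c(R)}$ with $c(R)\to\infty$, i.e.\ a localization of the higher eigenmodes; Proposition~\ref{Prop_Semicl}~(iv) gives such decay only for $k=0,1$, and weighted Cauchy--Schwarz with $(1+W)^{-1}$ yields only polynomial decay in $R$, which is killed by the $\frac1\beta\log$. This is precisely the difficulty the paper resolves with the uniform intrinsic hypercontractivity of \cite{DavSim}, which dominates $\int_A g_{\ell}(x,y)\,\rmd x\,\rmd y$ by $C\big(\int_A[\psi_0(x)\psi_0(y)]^{2/3}\big)^{3/4}$ for every Borel $A$ and $\ell_0>2T$, thereby transferring the Agmon decay of $\psi_0$ to the whole kernel. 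You would need either this, or uniform-in-$k$ Agmon/Feynman--Kac decay estimates for the diagonal $g_\ell(x,x)$ at large $|x|$, to close the argument.
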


We remark that the exponential tightness in (i) actually holds for any $\ell_0>0$ but we refrain from proving this stronger statement as it is not needed in the present analysis. We also emphasize that the super-exponential bounds (ii) and (iii) depend on the invariance of the boundary conditions with respect to the map  $X\mapsto -X $.

We premise the following lemma based on the Thompson-Goldstein
inequality~\cite[Theorem 9.2]{SimonFI}.

\begin{lemma}
\label{traceformula2}
There exists a constant $C$ such that, for any $T\ge 1$ and $\beta$ large enough,
\[
\sum_{k=2}^\infty \rme^{- T(E_k- E_0)}    \left( 1 +
\|\psi_k\|^2_{L^1}    \right)    \le C \rme^{-  T} . 
\]
\end{lemma}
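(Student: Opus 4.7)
The plan is to split $1+\|\psi_k\|_{L^1}^2$ and reduce each piece to a trace bound on the semigroup $\rme^{-H_\beta}$, using the semiclassical gap from Proposition~\ref{Prop_Semicl}(i). Since $E_2 - E_0 \to 2$, for $\beta$ large we have $E_2 - E_0 \geq 3/2$, so for any nonnegative weights $a_k$ and any $T \geq 1$, monotonicity in $k$ yields
\begin{equation*}
\sum_{k\geq 2}\rme^{-T(E_k-E_0)}a_k \;\leq\; \rme^{-(T-1)(E_2-E_0)}\,\rme^{E_0}\sum_{k\geq 0}\rme^{-E_k}a_k \;\leq\; \rme^{3/2+E_0}\,\rme^{-T}\sum_{k\geq 0}\rme^{-E_k}a_k.
\end{equation*}
It thus suffices to show that $\mathrm{Tr}(\rme^{-H_\beta})$ and $\sum_k \rme^{-E_k}\|\psi_k\|_{L^1}^2$ are bounded uniformly in $\beta$ (for $\beta$ large), and the $\rme^{E_0}$ factor is harmless since $E_0\to 1$.

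For the first quantity, the Thompson--Goldstein inequality of Simon's Theorem~9.2 gives
\begin{equation*}
\mathrm{Tr}\bigl(\rme^{-H_\beta}\bigr) \;\leq\; \sqrt{\beta/(2\pi)}\int \rme^{-\beta W(x)}\,\rmd x,
\end{equation*}
and Laplace's method around the two wells $\pm1$ shows that the right-hand side converges to the finite constant $2/\sqrt{W''(1)}$, in particular it stays bounded. For the second quantity I would apply Cauchy--Schwarz with the weight $(1+x^2)^{-1/2}\in L^2(\bb R)$, giving $\|\psi_k\|_{L^1}^2 \leq \pi\bigl(1+\lp x^2\rp_k\bigr)$ with $\lp x^2\rp_k := \lp\psi_k, x^2\psi_k\rp$. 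The pointwise inequality $x^2\leq W(x)+3/2$ for the quartic $W$ (equivalent to $(x^2-2)^2\geq 0$) together with $\beta\lp W\rp_k \leq \lp\psi_k, H_\beta\psi_k\rp = E_k$ then yields $\lp x^2\rp_k \leq 3/2+E_k/\beta$. Summing,
\begin{equation*}
\sum_k \rme^{-E_k}\|\psi_k\|_{L^1}^2 \;\leq\; \tfrac{5\pi}{2}\,\mathrm{Tr}(\rme^{-H_\beta}) + \tfrac{\pi}{\beta}\,\mathrm{Tr}\bigl(H_\beta \rme^{-H_\beta}\bigr).
\end{equation*}
The first term is already controlled. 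For the second, the elementary inequality $x\rme^{-x/2}\leq 2/\rme$ gives $\mathrm{Tr}(H_\beta\rme^{-H_\beta}) \leq (2/\rme)\,\mathrm{Tr}(\rme^{-H_\beta/2})$, and a further application of Thompson--Goldstein plus Laplace yields an $O(1)$ bound, uniform in $\beta$.

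The main technical point is verifying the Laplace asymptotics of $\sqrt{\beta/(2\pi)}\int \rme^{-c\beta W(x)}\,\rmd x$ uniformly in $\beta$ large, for $c=1$ and $c=1/2$: the $\sqrt\beta$ from the free heat kernel must cancel precisely the $\beta^{-1/2}$ generated by Laplace at each well, producing $O(1)$ traces. Everything else is spectral factorization through the gap combined with Cauchy--Schwarz, and the bound $C\rme^{-T}$ then follows from the preliminary reduction.
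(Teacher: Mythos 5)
Your proof is correct and follows essentially the same route as the paper's: factor off the $\rme^{-T}$ decay using the uniform gap $E_2-E_0\to 2$, bound $\|\psi_k\|_{L^1}^2$ by Cauchy--Schwarz against a weighted $L^2$ norm which is then controlled by $\beta\langle W\rangle_k\le E_k$, and close with the Thompson--Goldstein phase-space bound. The only cosmetic differences are the choice of Cauchy--Schwarz weight --- you use $(1+x^2)^{-1/2}$ and the quartic-specific inequality $x^2\le W+3/2$, whereas the paper weights directly by $(1+W)^{-1/2}$, avoiding that extra step --- and the handling of the stray $E_k$ factor, where you use $E_k\rme^{-E_k}\le (2/\rme)\rme^{-E_k/2}$ while the paper absorbs $1+\beta^{-1}E_k\le\rme^{\beta^{-1}E_k}$ into a shifted exponential; both are equally serviceable.
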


\begin{proof}
By the Thompson-Goldstein inequality, for each $\lambda>0$,
\begin{equation} 
\label{TGI} 
\sum_{k=0}^\infty \rme^{- \lambda E_k} \le \frac{\beta}{2\pi} \int_{\bb R^2}\! \rmd x\, \rmd p\, \rme^{-\lambda \beta (\frac 12 p^2 + W(x))} \le C ,
\end{equation}
for a suitable constant $C= C(\lambda)$.   On the other hand, for a suitable constant $C$, whose numerical value may change from line to line, 
\begin{align*}
& \sum_{k=2}^{\infty} \rme^{- T (E_k-E_0)} \|\psi_k\|^2_{L^1} = \sum_{k=2}^{\infty} \rme^{- T (E_k-E_0)} \bigg(\int_{\bb R} \!\rmd x\, \frac{ \sqrt{1+W(x)}}{\sqrt{1+W(x)}} |\psi_{k}(x)| \bigg)^2 \\ & \qquad \le C \sum_{k=2}^{\infty} \rme^{-T (E_k-E_0)} \int_{\bb R} \!\rmd x\, [1+W(x)] \, |\psi_{k}(x)|^2 \\ & \qquad \le C \sum_{k=2}^{\infty} \rme^{-T (E_k- E_0)} \big(1 + \beta^{-1} E_k\big) \le C \sum_{k=2}^{\infty} \rme^{-(T-\beta^{-1})E_k + T E_0} \\ & \qquad \le C \rme^{ E_2} \rme^{- T (E_2 - E_0)} \sum_{k=2}^{\infty} \rme^{-(1-\beta^{-1})E_k}.
\end{align*}
The statement follows then from~\eqref{TGI} and Proposition~\ref{Prop_Semicl} (i).
\end{proof}

\begin{proof}[Proof of Proposition~\ref{sharpwp}]
$~$

\noindent
\emph{Proof of (i).}  By~\eqref{seriesg} and~\eqref{p01bb}, 
\[
\varliminf_{\beta\to\infty}  \frac 1\beta \log  Z_{\ell_\beta}  \ge  \varliminf_{\beta\to\infty}  \frac 1\beta \log  \|\psi_0\|_{L^1}^2   =   0.
\]
It is therefore enough to show
\begin{equation} 
\label{exptightg}
\lim_{R\to \infty} \varlimsup_{\beta \to \infty}   \frac 1\beta \log   \int_{K_R^{\mathrm{c}}} \! \rmd x\,  \rmd y\,  g_{\ell_\beta}( x, y) =   -\infty.
\end{equation}
We claim that there exists a constant $C$ such that, for every Borel subset $A \subset \bb R^2$ and $\beta \ge 1$,
\begin{equation} 
\label{hyperc}
\int_{A}\! \rmd x\,  \rmd y\,   g_{\ell_\beta}( x, y)  \le   C    \bigg( \int_A \! \rmd x\, \rmd y\, \big[ \psi_0(x) \psi_0 (y) \big]^{2/3}\bigg)^{3/4}.
\end{equation}
This claim, together with Proposition~\ref{Prop_Semicl} (iv) yields~\eqref{exptightg}. In order to prove~\eqref{hyperc} we use that the semigroup $\left(\rme^{-t H_\beta}\right)_{t>0}$ is {\it intrinsically hypercontractive}, uniformly in $\beta$. More precisely, by~\cite[Theorem 6.5]{DavSim}, there exist $T, C \in (0, \infty)$ such that, for any $\beta \ge 1$,
\[
\| \rme^{T L_\beta} f\|_{L^4(\pi_\beta)} \le C   \|  f\|_{L^2(\pi_\beta)} , 
\]
where $L_\beta =   - \frac{1}{\psi_0}(H_\beta - E_0) \psi_0 $. In particular,
\[
\bigg\| \frac{\psi_k}{\sqrt{\psi_0}}  \bigg\|_{L^4}   =  \bigg\| \frac{\psi_k}{\psi_0} \bigg\|_{L^4(\pi_\beta)}  \le C  \rme^{T (E_k - E_0)} .
\]
This last estimate together with H\"older inequality, yields
\begin{gather*}
\int_{A}\! \rmd x\, \rmd y \,  g_{\ell_\beta}( x, y)  \le   \sum_{k =0}^\infty  \rme^{ - \ell_\beta (E_k - E_0)}     \int_A\! \rmd x\, \rmd y \, \frac{| \psi_k (x)|}{\sqrt{\psi_0(x)}}  \frac{| \psi_k (y)|}{\sqrt{\psi_0(y)}}  \, \sqrt{\psi_0(x)}   \sqrt{\psi_0(y)} \\ \le   C^2  \sum_{k =0}^\infty   \rme^{ - (\ell_\beta -  2T) (E_k - E_0)}    \bigg( \int_A\! \rmd x\, \rmd y \,  \big[ \psi_0(x) \psi_0 (y) \big]^{2/3}\bigg)^{3/4}      .  
\end{gather*}
Choosing $\ell_0 > 2T$, the conclusion follows from the Thompson-Goldstein inequality~\eqref{TGI} and Proposition~\ref{Prop_Semicl} (i).

\smallskip
For the proof of (ii) and (iii) we first observe that, in view of~\eqref{seriesg},~\eqref{p01bb}, and Lemma~\ref{traceformula2}, the normalization in~\eqref{defwpT} satisfies
\begin{equation}
\label{stimaZpw}
\lim_{\beta\to \infty} \frac 1\beta \log \big| Z_{{\ell_\beta}} - \|\psi_0\|^2_{L^1} \big| =  -\infty
\end{equation}
whenever $\lim_\beta \beta^{-1} {\ell_\beta} =\infty$.

\smallskip
\noindent
\emph{Proof of (ii).}
Set
\begin{equation}
\label{splitseries2}
\mc E_{T_\beta}(x,y) = \int_{\bb R^2}\!  \rmd a \, \rmd b \, g_{T_\beta} (a,x) g_{T_\beta} (y,b) - \| \psi_0 \|^2_{L^1} \, \psi_0(x) \psi_0(y) ,
\end{equation}
so that
\[
Z_{\ell_\beta} \varrho^{T_\beta}(x,y) = \|\psi_0\|^2_{L^1} \bar{\varrho}^{T_\beta} (x, y) + \mc
E_{T_\beta}(x,y) g_{{\ell_\beta}-2{T_\beta}}(x,y) .
\]
By~\eqref{seriesg} and~\eqref{p01bb} we have
\begin{align*} 
|\mc E_{T_\beta}(x,y)| & \le C\beta^{-1/4} \sum_{k=2}^\infty \rme^{- T_\beta (E_k-E_0) } \|\psi_k\|_{L^1} ( |\psi_k(x)| \psi_0(y) + |\psi_k(y)| \psi_0(x) ) \\ & \quad + \sum_{k, j=2}^\infty \rme^{- T_\beta(E_k + E_j -2E_0) } \, \|\psi_k\|_{L^1} \|\psi_j\|_{L^1} |\psi_k(x)| |\psi_j(y)|. 
\end{align*}
Arguing as in the proof of Lemma~\ref{traceformula2}, $\|\psi_k\|_{L^1} \le C\sqrt{1+\beta E_k}$ so that, by Proposition~\ref{Prop_Semicl} (i),  after elementary computations, we deduce that there exists a constant $C$ such that for every $x,y \in \bb R^2$ and $\beta$ large enough,
\begin{align} 
\label{stimasuE}
|\mc E_{T_\beta}(x,y)| & \le C \rme^{- {T_\beta}} \sum_{k=2}^\infty \rme^{- E_k } \, ( |\psi_k(x)| \psi_0(y) + |\psi_k(y)| \psi_0(x) ) \nonumber \\ & \quad + C \rme^{- 2 {T_\beta}} \sum_{k, j=2}^\infty \rme^{- ( E_k + E_j) } \, |\psi_k(x)| |\psi_j(y)|.  
\end{align}
By Lemma~\ref{traceformula2} and the Cauchy-Schwartz inequality we conclude that
\[
\lim_{\beta\to \infty} \frac 1\beta \log  \int\! \rmd x\, \rmd y\, |\mc E_{T_\beta}(x,y)|g_{{\ell_\beta}-2{T_\beta}}(x,y) = -  \infty,
\]
which, together with~\eqref{p01bb} and~\eqref{stimaZpw} implies the statement.

\smallskip
\noindent
\emph{Proof of (iii).}
Since $\psi_1(0) =0$, from~\eqref{seriesg} we deduce
\[
\begin{split}
\| \pi^{T_\beta}_\beta - \pi_\beta\|_{\mathrm{TV}} &\le \frac 12 \frac{1}{\psi_0(0)}\sum_{k =2}^\infty \rme^{-{T_\beta}(E_k-E_0)} |\psi_k(0)| \|\psi_k\|_{L^1} \\ &\le \frac 14 \frac{1}{\psi_0(0)}\sum_{k =2}^\infty \rme^{-{T_\beta}(E_k-E_0)} \big( |\psi_k(0)|^2 + \|\psi_k\|^2_{L^1} \big).
\end{split}
\]
Using Proposition~\ref{Prop_Semicl} (iii) and Lemma~\ref{traceformula2}, to conclude the proof of the statement it is enough to show
\begin{equation} 
\label{pinco}
\lim_{\beta\to \infty} \frac 1\beta \log \sum_{k =2}^\infty \rme^{-{T_\beta}(E_k-E_0)} |\psi_k(0)|^2 = - \infty .
\end{equation}
To this end, observe that, by Sobolev embedding and~\eqref{He}, there
exists a constant $C<\infty$ such that for every $k$
\begin{equation} 
\label{Sobolev}
\|\psi_k\|_{\infty}^2 \le C \|\psi_k\|^2_{H^1} \le C( 1 + 2\beta E_k ) 
\end{equation}
and~\eqref{pinco} follows by arguing as in the proof of Lemma~\ref{traceformula2}.
\end{proof}

The last statement of this section provides a global control on  $|\psi_1|-\psi_0$  in terms of the spectral gap $E_1-E_0$. This appears to be a novel point in the semiclassical analysis of $H_\beta$ and it is the crucial ingredient for obtaining the sharp asymptotics in Theorem~\ref{theorem_sharp}. 

\begin{theorem} 
\label{efsplitting}
There exists $C\in(0,\infty)$ such that for every $\beta\ge 1$ 
\[
\sup_{x\ge 0}  \, \psi_0(x) \big|\psi_0(x) - \psi_1(x)\big| \le C \beta^{\frac 32} (E_1-E_0) . 
\]
\end{theorem}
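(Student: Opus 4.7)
The starting point is the Wronskian identity for the pair $(\psi_0,\psi_1)$. Setting $W(x):=\psi_0(x)\psi_1'(x)-\psi_0'(x)\psi_1(x)$, from $H\psi_k=E_k\psi_k$ one computes $W'(x)=-2\beta(E_1-E_0)\psi_0(x)\psi_1(x)$, so that integrating from $-\infty$ (where both eigenfunctions vanish)
\[
W(x)=-2\beta(E_1-E_0)\,G(x),\qquad G(x):=\int_{-\infty}^x \psi_0\psi_1\,\mathrm d y.
\]
By oddness of $\psi_0\psi_1$ and Corollary~\ref{t:3.2}, $G(x)\le 0$ on $[0,\infty)$ with $G(0)\to -1/2$ and $\sup_x|G(x)|\le 1/2+o(1)$. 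Since $\psi_1(0)=0$, the ratio $\eta:=\psi_1/\psi_0$ on $[0,\infty)$ satisfies $\eta(0)=0$ and $\eta'=W/\psi_0^2$; integrating gives
\[
q(x):=\psi_0(x)\bigl[\psi_0(x)-\psi_1(x)\bigr]=\psi_0(x)^2+2\beta(E_1-E_0)\,\psi_0(x)^2\!\int_0^x\!\frac{G(y)}{\psi_0(y)^2}\,\mathrm d y.
\]
Equivalently, $q$ solves the first order linear ODE $q'(x)=2(\log\psi_0)'(x)\,q(x)+2\beta(E_1-E_0)\,G(x)$ with $q(0)=\psi_0(0)^2$ and $q(\infty)=0$.

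The uniform bound would then be established by splitting $[0,\infty)$ into three regions, matched to the scales at which $\psi_0$ behaves. Near the origin, $x\in[0,\delta]$, sharp WKB asymptotics based on the Riccati equation~\eqref{riccati} together with Proposition~\ref{Prop_Semicl}(ii) give $\psi_0(x)^2\le C\sqrt\beta\,\rme^{-\beta C_\mathrm W}\le C\beta(E_1-E_0)$, and the perturbation integral is negligible in this window. In the intermediate compact region $x\in[\delta,R]$, the two summands in the identity for $q$ are individually of exponential order $\psi_0(x)^2\cdot\rme^{2\beta(C_\mathrm W/2-U(x))}$, but the cancellation between them is dictated by the two-well WKB expansion: the symmetric/antisymmetric structure forces $\psi_0(y)-\psi_1(y)\sim \rme^{-\beta(C_\mathrm W-U(y))}$ in the forbidden region, so that the product $\psi_0\cdot|\psi_0-\psi_1|$ is of order $\rme^{-\beta C_\mathrm W}$ uniformly in $y$; matching with Proposition~\ref{Prop_Semicl}(ii) and the prefactor~\eqref{AW} yields the claim with polynomial room. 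In the tail $x\ge R$, Proposition~\ref{Prop_Semicl}(iv) provides $\psi_0(x)\le \rme^{-\beta\gamma x}$ and both summands are super-exponentially small.

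\noindent\textbf{Main obstacle.} The delicate step is the intermediate region: the two summands of $q$ are individually exponentially large in $\beta$ while only their difference is of the required order $\beta^{3/2}(E_1-E_0)$. A rigorous treatment requires matching the sharp two-well WKB asymptotics against the explicit prefactor of $A_\mathrm W$ in~\eqref{AW}. A cleaner alternative, which I would pursue, is to work directly with the ODE satisfied by $q$: with integrating factor $1/\psi_0^2$ one has $(q/\psi_0^2)'=2\beta(E_1-E_0)\,G/\psi_0^2$, and a Gronwall/comparison argument against the barrier $C\beta^{3/2}(E_1-E_0)$ can be closed using the controlled boundary value $q(0)\le C\beta(E_1-E_0)$, the vanishing $q(\infty)=0$, and Laplace asymptotics for $\int_0^x |G|/\psi_0^2\,\mathrm d y$ exploiting the uniform bound $\sup|G|\le 1/2+o(1)$ and the explicit form of $A_\mathrm W$ in~\eqref{AW}.
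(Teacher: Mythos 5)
Your Wronskian computation is correct and the resulting identity
\[
q(x)=\psi_0(x)\bigl[\psi_0(x)-\psi_1(x)\bigr]=\psi_0(x)^2\Bigl[\,1-2\beta(E_1-E_0)\int_0^x\frac{|G(y)|}{\psi_0(y)^2}\,\rmd y\Bigr]
\]
is an exact reformulation of the problem, but it is also circular: by construction $2\beta(E_1-E_0)\int_0^x|G|/\psi_0^2=\psi_1(x)/\psi_0(x)$, so the smallness of the bracket \emph{is} the statement to be proved, and all the content must come from the asymptotics you invoke in the "intermediate region". This is where the gap lies. At a point such as $x=1/2$ one has $\psi_0(x)^2\asymp \sqrt\beta\,\rme^{-2\beta U(x)}$ with $U(x)<U(0)=C_\mathrm{W}/2$, so the claimed bound forces
\[
\Bigl|\,1-2\beta(E_1-E_0)\int_0^{x}\frac{|G|}{\psi_0^2}\Bigr|\le C\,\beta\,\rme^{-2\beta(U(0)-U(x))},
\]
i.e.\ you must identify the product $\beta(E_1-E_0)\cdot\int_0^x|G|/\psi_0^2$ up to an \emph{exponentially} small relative error. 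Neither the Gronwall scheme (the integrating factor only tells you that $\psi_1/\psi_0$ is nondecreasing) nor "Laplace asymptotics matched against $A_\mathrm{W}$" delivers this: Proposition~\ref{Prop_Semicl}(iii) controls $\psi_0$ only to logarithmic order, and Proposition~\ref{Prop_Semicl}(ii) gives the gap only to $1+o(1)$ relative precision, both far too coarse. Closing your argument would require uniform pointwise WKB asymptotics of $\psi_0$ and of $\psi_0-\psi_1$ in the classically forbidden region with controlled prefactors, i.e.\ essentially re-proving Harrell's tunneling analysis in a strengthened form; you correctly name this as the main obstacle but do not overcome it.

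The idea that is missing is to bypass sharp pointwise asymptotics by a quasimode argument. The paper sets $f_1=Z^{-1}\theta\psi_0$ with $\theta(x)=\kappa\int_0^x\chi/\psi_0^2$, which is precisely your formula for $\psi_1/\psi_0$ with the unknown weight $|G|$ replaced by the explicit cutoff $\chi/2$ and the unknown constant $2\beta(E_1-E_0)$ replaced by the explicit normalization $\kappa$, chosen so that $\theta\equiv1$ past the well. The weighted bound $\sup_{x\ge0}\psi_0\,|\psi_0-f_1|\le C\sqrt\beta\,\kappa$ then follows from the elementary inequality $0\le1-\theta\le\kappa/\psi_0^2$; Temple's inequality identifies $\kappa\sim\beta(E_1-E_0)$; and the remaining error $\psi_1-f_1$ is controlled in $L^\infty$ by the spectral projection estimate of Lemma~\ref{Chebyshev}(ii) applied to $(H-E_0)f_1$, whose $L^2$ norm is computed by the ground state transformation. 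All of the exponentially sharp cancellation you struggle with is thereby absorbed into integrated, functional-analytic quantities. As written, your proposal does not constitute a proof.
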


\begin{proof}
The proof is based on the construction of a test function $f_1$ which provides a good global approximation to $\psi_1$. To this end, let $\bar\chi\in C^\infty(\bb R)$ be such that $0\le \bar\chi\le 1$, $ \bar\chi (x)= 1 $ on $(-\infty, -2]$, and $\bar\chi (x)= 0 $ on $[-1, +\infty)$.  Further, for $\delta\in (0,\frac 12)$, set $\chi(x) = \chi_\delta(x) = \bar \chi (\delta^{-1}(|x|-1))$.  Note that $\chi_\delta =1$ on $[-1+2\delta, 1-2\delta]$ and $\chi_\delta = 0$ on the complement of $[-1+\delta, 1-\delta]$. Define
\[
\theta(x) := \kappa \int_0^x\! \rmd y\, \frac{\chi(y)}{\psi_0^2(y)}, \qquad   \kappa = \kappa_\beta := \bigg(\int_0^1\! \rmd y\, \frac{\chi(y)}{\psi_0^2(y)} \bigg)^{-1} .  
\]
In particular  $\theta(x) = 1 $ for $x\ge 1 $.  Set finally
\[
f_1  :=  \frac 1Z \theta \psi_0, \qquad Z   =   Z_\beta  := \bigg(\int\! \rmd x\,  \theta^2  \psi^2_0 \bigg)^{\frac 12} .    
\]
Note that $f_1$ is antisymmetric and normalized in $L^2$. Further, by Proposition~\ref{Prop_Semicl} (iv), for every $a\in (0,1]$, \begin{equation}
\label{kappa}   
\frac{1}{\kappa} \sim  \int_0^a \! \rmd y\, \frac{\chi(y)}{\psi_0^2(y)}  \sim   \int_0^a \! \rmd y\, \frac{1}{\psi_0^2(y)} \quad  \text{ and }   \quad  \lim_{\beta\to \infty} \frac{1}{\beta} \log \kappa =  - C_\mathrm{W} .      
\end{equation}

\smallskip
\noindent
\emph{Step 1.}  There exists a constant $C$ such that, for every $\beta\ge 1$,
\[   
|Z-1| \le  C\kappa   \quad  \text{ and }  \quad   \sup_{x\ge 0} \psi_0(x) |\psi_0(x) - f_1(x)| \le  C  \sqrt\beta \kappa .   
\]

Since $E_0 \to 1$ and $\psi_0$ satisfies $ \frac {1} {2\beta} \psi_0''   - \beta W \psi_0 = - E_0 \psi_0$ we deduce that there exists a constant $c>0$ depending only on  $W$ such that $\psi_0'(x) \ge 0$ for $x\in [0, 1- c \beta^{-\frac 12}]$ and $\beta$ large enough.  Hence, choosing  $\delta=\delta_\beta = c\beta^{-\frac 12} $,   for  $x \in [0,1]$, 
\begin{equation} 
\label{theta}
0 \le 1- \theta(x)   =   \kappa \int_x^1\!\rmd y\, \frac{\chi(y)}{\psi_0^2(y)} \le   \frac{\kappa}{\psi_0^2(x)} . 
\end{equation}
Therefore,
\[
0 \le 1  - Z^2   =   2 \int_{0}^1 \! \rmd x\, [1 -  \theta^2(x) ]  \psi_0^2 (x)    \le   4 \kappa,   
\]
which proves the first claim. To prove the second, we write
\[ 
\psi_0(x) |\psi_0(x) - f_1(x)|  \le   \psi_0^2(x) | 1- \theta(x)| +   \psi_0^2(x) |1 - Z^{-1}| . 
\]
We then conclude by using~\eqref{theta}, Proposition~\ref{Prop_Semicl} (v), and the bound on $|Z-1|$. 

\smallskip
\noindent
\emph{Step 2.} There exists a constant $C>0$ such that for any $\beta$  large enough
\[    
\int_{\bb R}\!\rmd x\,  f_1 (H_\beta - E_0) f_1   \sim \beta^{-1} \kappa \quad  \text{ and  }   \quad \int_{\bb R} \! \rmd x\, |(H_\beta - E_0) f_1 |^2 \le    C \beta^{-2} \kappa^2 .  
\]

To prove the first claim, note that by the ground state transformation, 
\[   
\int_{\bb R}   \!  f_1 (H_\beta - E_0) f_1  \rmd x    = \frac{1}{2\beta} \int_{\bb R} \! \rmd x\, \bigg|\Big(\frac{f_1}{\psi_0} \Big)' \bigg|^2 \psi_0^2  = \frac{ \kappa^2 }{ \beta Z^2}  \int_0^1  \! \rmd x\, \frac{\chi^2 }{ \psi_0^2  } \sim   \beta^{-1}  \kappa, 
\]
where we used the first statement in~\eqref{kappa} and, as follows from Step 1, $Z\sim 1$. For the second claim, again by the ground state transformation, 
\begin{gather*} 
\int_{\bb R} \! \rmd x\,  \big|(H_\beta - E_0) f_1 \big|^2 =  \int_{\bb R} \! \rmd x\,   \bigg| \frac{1}{2\beta} \frac{1}{\psi_0^2} \bigg[ \psi_0^2 \Big(\frac{f_1}{\psi_0}\Big)' \bigg]' \bigg|^2   \psi_0^2   =    \frac{\kappa^2}{Z^2 (2\beta)^2 }    \int_{\bb R}   \!  \rmd x\, \frac{|\chi'|^2}{\psi_0^2}.
\end{gather*}
By the definition of $\chi_{\delta}$, the choice of $\delta = c\beta^{-\frac 12}$,  and the monotonicity of $\psi_0$ on the interval $[0, 1-\delta_\beta]$, we deduce that there exists a constant $C>0$ such that, for any $\beta$ large enough,
\[   
\int_{\bb R} \!\rmd x \,  |(H_\beta - E_0) f_1 |^2  \le C \frac{\kappa^2}{Z^2 \beta^{3/2} }  \frac{1}{\psi^2_0 (1- \delta_\beta)}  \le C \frac{\kappa^2}{ \beta^{2} } ,
\]
where we used the second bound in Proposition~\ref{Prop_Semicl} (v) and $Z\sim 1$ as proven in Step 1.

\smallskip
\noindent
\emph{Step 3.}    As $\beta \to \infty$,  $\beta (E_1- E_0) \sim \kappa  $. 

In view of the orthogonality $\langle f_1, \psi_0\rangle=0$ and the first statement of Step 2, the upper bound follows from the Riesz variational principle. The lower bound follows from Step 2, $E_2 -E_0 \to 2 $, as stated in Proposition~\ref{Prop_Semicl} (i), and the Temple inequality, see e.g.~\cite{Harrell1} or \cite[Prop.~5.1]{DG}. 

\smallskip
\noindent
{\it Conclusion.}  We write 
\[ 
\psi_0 (x) |\psi_1(x) - \psi_0(x)| \le \psi_0 (x) |\psi_0(x) - f_1(x)| + \psi_0 (x) |\psi_1(x) - f_1(x)| .  
\] 
The first term on the right hand side is bounded by using Steps 1 and 3. By Lemma~\ref{Chebyshev} (ii) with $R=\infty$, $E= E_0$,  $c \in ( \bar E_0, \bar E_1) $, and Steps 2-3,
\[ 
\| f_1 - \langle f_1, \psi_1 \rangle \psi_1 \|_{L^\infty} \le C \sqrt \beta (E_1- E_0), 
\] 
for a suitable constant $C$. Furthermore, since $\langle f_1, \psi_1 \rangle \ge 0$, Lemma~\ref{Chebyshev} (i) with $R, E, c$ as before, and again  Steps 2-3 imply
\[ 
|\langle f_1, \psi_1 \rangle -1|   \le  |\langle f_1, \psi_1 \rangle^ 2 -1|  \le   \| f_1 - \langle f_1, \psi_1 \rangle \psi_1 \|^2_{L^2}    \le  C  (E_1-E_0).    
\]
We conclude the proof by using Proposition~\ref{Prop_Semicl} (v) to bound $\|\psi_k\|_{\infty}$ for $k=0,1$.
\end{proof}

\section{Small  noise analysis of the diffusion process}
\label{sec:4}

The present analysis of the $\phi^4_1$ measure is based on a representation of the probability $\mu_{\beta,\ell}$ in terms of the diffusion~\eqref{diff} that we here describe. Given $\ell>0$ and $x,y\in \bb R$, denote by $\bb P^{\beta, \ell}_{x,y}$ the law of the solution to~\eqref{diff} starting at time $t=0$ from $x$ conditioned to $X_\ell=y$. The probability $\bb P^{\beta, \ell}_{x,y}$ can be identified with the $\phi^4_1$ measure on $[0,\ell]$ with boundary conditions $X_0=x$ and $X_\ell =y$. Since the marginal distribution of $\mu_{\beta, \ell}$ at the endpoints is  the probability $\wp_{\beta,\ell}$ with density given in~\eqref{defwpT} with $T=0$, we obtain the following representation.  
\begin{equation}
\label{rme}
\mu_{\beta,\ell} (\rmd X) = \wp_{\beta,\ell}(\rmd X_0\,\rmd X_\ell) \, \bb P^{\beta,\ell}_{X_0,X_\ell}(\rmd X). 
\end{equation}

Given $x\in \bb R$ we denote by $\bb P_x^\beta$ the law of the solution to
\eqref{diff} with initial condition $x$, that we regard as a probability on $C([0,\infty))$. By using the result on the semiclassical analysis of the anharmonic oscillator deduced in Section~\ref{sec:3}, in this section we derive the asymptotic behavior of this probability in the limit $\beta\to\infty$. In the next sections, we will then achieve the proofs of the main results by exploiting the representation~\eqref{rme}.

A peculiar feature of the diffusion~\eqref{diff} is that, according to Proposition~\ref{Prop_Semicl} (iii), the potential $U_\beta$ in~\eqref{4.1} exhibits a corner at the local maximum $x=0$ in the limit $\beta\to\infty$. This will be an important feature in the analysis of its metastable behavior, implying in particular that in the transition between the pure phases $\pm 1$ essentially no time is spent in a neighborhood of zero.

We first show that with probability super-exponentially close to one the solution to~\eqref{diff} is bounded on exponentially large intervals. 

\begin{lemma}
\label{p:1}
Let $(\ell_\beta)_{\beta>0}$ be such that $\varlimsup_\beta \beta^{-1} \log\ell_\beta < \infty$. 
Then for each $L>0$
\begin{equation}
\label{6.31}
\lim_{R\to\infty} \varlimsup_{\beta\to\infty}\sup_{|x|\le L}
\frac 1\beta \log\bb P_x^{\beta} \Big( \sup_{t\in [0,\ell_\beta]} |X_t| > R
\Big) =-\infty.
\end{equation}
\end{lemma}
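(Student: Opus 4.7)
The plan is to use a Lyapunov function argument based on the reciprocal of the ground state, namely
\[
V_\beta(x) := \frac{1}{\psi_{0,\beta}(x)} = \rme^{\beta U_\beta(x)} .
\]
The ground-state equation $H_\beta\psi_{0,\beta}=E_{0,\beta}\psi_{0,\beta}$ (equivalently, the Riccati equation~\eqref{riccati}) yields, after a direct computation, the pointwise identity
\[
L V_\beta = \big(E_{0,\beta} - \beta W\big)\, V_\beta ,
\]
where $L$ denotes the generator of the diffusion~\eqref{diff}. Since $E_{0,\beta}\to 1$ by Proposition~\ref{Prop_Semicl}(i) while $W$ is continuous and grows at infinity, there exist $R_0>1$ and $\beta_0\ge 1$ such that $LV_\beta(x)\le 0$ for every $\beta\ge \beta_0$ and $|x|\ge R_0$.

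Fix $R\ge R_0$, $|x|\le L$, and let $\tau_R:=\inf\{t\ge 0 : |X_t|\ge R\}$. Since $V_\beta$ is smooth and bounded on $[-R,R]$, It\^o's formula together with optional stopping at $t\wedge\tau_R$ gives
\[
\mathbb E_x^\beta\big[V_\beta(X_{t\wedge\tau_R})\big] \le V_\beta(x) + c_\beta\, t,
\qquad c_\beta:=\max_{|y|\le R_0}\big(LV_\beta\big)_+(y) .
\]
By Proposition~\ref{Prop_Semicl}(iii), the quantity $c_\beta$ is bounded by a constant times $\rme^{\beta(U(R_0)+o(1))}$. On the event $\{\tau_R\le t\}$, path continuity yields $|X_{\tau_R}|=R$, hence by non-negativity of $V_\beta$,
\[
\mathbb E_x^\beta\big[V_\beta(X_{t\wedge\tau_R})\big] \ge \bb P_x^\beta(\tau_R\le t)\cdot \min_{|y|=R} V_\beta(y) ,
\]
while Proposition~\ref{Prop_Semicl}(iv) gives $\min_{|y|=R}V_\beta(y)\ge \rme^{\beta\gamma R}$ once $R$ is sufficiently large, uniformly in $\beta\ge 1$.

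Combining these bounds with the uniform estimate $V_\beta(x)\le \rme^{\beta(\max_{|y|\le L}U(y)+o(1))}$ for $|x|\le L$, one obtains
\[
\sup_{|x|\le L}\bb P_x^\beta(\tau_R\le \ell_\beta) \le \rme^{-\beta\gamma R}\Big(\rme^{\beta(\max_{|y|\le L}U(y)+o(1))}+ c_\beta\, \ell_\beta\Big).
\]
The hypothesis $\varlimsup_\beta \beta^{-1}\log \ell_\beta<\infty$ ensures that the parenthesis is bounded by $\rme^{\beta K(L)}$ for some finite $K(L)$ independent of $R$. Taking $\beta^{-1}\log$ and letting $R\to\infty$ then produces~\eqref{6.31}, since $\gamma R$ diverges. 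The only delicate point is the control of the ``bad'' contribution $c_\beta$, originating from the region $|y|\le R_0$ where $LV_\beta$ can be positive (near the wells $\pm 1$); however, $c_\beta$ grows only as $\rme^{\beta\cdot\mathrm{const}}$ and is dominated by the factor $\rme^{-\beta\gamma R}$ once $R$ is taken large.
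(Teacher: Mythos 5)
Your argument is correct, and it takes a genuinely different route from the paper. The paper proves the lemma by writing $u(t,x)=\bb P^\beta_x(\tau_R>t)$ as the solution of a Dirichlet problem on $(-R,R)$, expanding it in the eigenbasis of the Dirichlet operator $H^R_\beta$ via the ground state transformation, and then controlling three separate quantities: the eigenvalue splitting $E_0^R-E_0$, the projection error $\psi_0-\langle\psi_0,\psi_0^R\rangle\psi_0^R$, and the tail of the spectral series (estimates \eqref{p1}--\eqref{p3}); this requires cutoff constructions, Lemma~\ref{Chebyshev}, Weyl's theorem and the min--max principle. Your Lyapunov argument replaces all of this with the single exact identity $L(1/\psi_0)=(E_0-\beta W)/\psi_0$ (equivalently, the Riccati equation~\eqref{riccati}), the supermartingale/optional stopping inequality for the stopped process, the exponential decay of $\psi_0$ from Proposition~\ref{Prop_Semicl}~(iv) to lower-bound $1/\psi_0$ on $\{|y|=R\}$, and the locally uniform convergence $U_\beta\to U$ from Proposition~\ref{Prop_Semicl}~(iii) to control both $V_\beta(x)$ for $|x|\le L$ and the positive part of $LV_\beta$ on the compact set where it fails to be nonpositive. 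All the ingredients you invoke are available in the paper, the identity $LV_\beta=(E_{0,\beta}-\beta W)V_\beta$ checks out, and the final bound $\sup_{|x|\le L}\bb P^\beta_x(\tau_R\le\ell_\beta)\le \rme^{-\beta\gamma R}\,\rme^{\beta K(L)}$ immediately yields \eqref{6.31} under the stated growth hypothesis on $\ell_\beta$, with no need for the paper's preliminary reduction to $\varliminf_\beta\beta^{-1}\ell_\beta=\infty$. What your approach buys is brevity and transparency: the super-exponential smallness in $R$ comes directly from the decay of the ground state at infinity rather than from a full spectral comparison between $H^R$ and $H$. What it gives up is negligible here, since the auxiliary estimates \eqref{p1}--\eqref{p3} established in the paper's proof are not reused elsewhere.
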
 

\begin{proof}
By monotonicity we can assume $\varliminf_\beta \beta^{-1}\ell_\beta = \infty$. For $R>0$ let $\tau_R$ be the hitting time of $(-R,R)^\mathrm c$.  We observe that $u(t,x) := \bb P^\beta_x (\tau_R > t)$ solves 
\[    
\begin{cases}  \partial_t u  =  \frac{1}{2\beta} \partial^2_x u  - U'_\beta(x)   \partial_x  u,  &  \text{on } \bb (0, \infty)  \times  (-R, R),   \\ u(0, x) = 1,   &  x>0, \\ u(t,\pm R)  = 0,   & t>0. 
\end{cases}
\]
Let now $L^R = L_\beta^R$ be the  selfadjoint realization on $L^2((-R,R), \pi_\beta)$ of  $\frac{1}{2\beta} \frac{\rmd^2}{\rmd x^2}  -   U'_\beta(x)   \frac{\rmd}{\rmd x}$ with zero Dirichlet  boundary condition in $x=\pm R$. Here, we understand that $\pi_\beta$ is the sub-probability  on $(-R,R)$ given by  $\psi_0^2 \rmd x$.  Analogously, we let $H^R = H_\beta ^R$  be the  selfadjoint realization on $L^2((-R,R))$ of the Schr\"odinger operator 
$-\frac{1}{2\beta} \frac{\rmd^2}{\rmd x^2} +  \beta W $ with zero Dirichlet  boundary condition in $x=\pm R$. Let also $(E_k^R)_{k\ge 0}$ be the eigenvalues of $H^R$ and let $(\psi_k^R)_{k\ge 0}$ be a corresponding  orthonormal basis of eigenfunctions. By the ground state transformation,  
$L^R = - \frac{1}{\psi_0} (H^R - E_0) \psi_0 $, so that
\[
u(t,x)    =  \frac{1}{\psi_0(x)} \sum_{ k =0}^\infty \rme^{- t(E_k^R - E_0)}  \langle \psi_0 , \psi^R_k \rangle_{L^2((-R, R))} \psi_k^R (x).
\]
Hence, by the antisymmetry of $\psi_1^R$, 
\begin{align*}   
& \bb P_x^{\beta} \Big( \sup_{t\in [0,\ell_\beta]} |X_t| > R \Big) = 1 - u(\ell_\beta,x) \\ &  \qquad \qquad = \frac{1}{\psi_0(x)}  \left(  \psi_0(x) -   \rme^{- \ell_\beta(E_0^R- E_0)}  \langle \psi_0 , \psi^R_0 \rangle_{L^2((-R, R))} \psi_0^R (x)  \right)   \\   & \qquad\qquad \quad  +    \frac{1}{\psi_0(x)} \sum_{ k = 2}^\infty \rme^{- \ell_\beta(E_k^R - E_0)}  \langle \psi_0 , \psi^R_k \rangle_{L^2((-R, R))} \psi_k^R (x).    
\end{align*}
By Proposition~\ref{Prop_Semicl} (iii) and~\eqref{augdis}, for each $L>0$, 
\[    
\lim_{\beta} \sup_{|x| \le L}  \frac 1\beta \log \frac{1}{\psi_0(x)}  =  \max_{ |x|\le L} U(x) < \infty. 
\]
The proof is thus completed provided we show that for each $L>0$
\begin{align}
\label{p1} 
& \lim_{R\to \infty} \varlimsup_{\beta \to \infty} \frac 1\beta \log (E_0^R - E_0) = -\infty, \\ \label{p2}   & \lim_{R\to \infty} \varlimsup_{\beta \to \infty} \sup_{|x| \le L} \frac 1\beta \log \big|\psi_0(x) - \langle \psi_0, \psi_0^R \rangle_{L^2((-R, R))} \psi_0^R(x) \big| = -\infty, \\ \label{p3}  & \lim_{\beta \to \infty} \sup_{|x| \le L} \frac 1\beta \log \sum_{k\ge 2} \rme^{-\ell_\beta(E_k^R - E_0) }  |\psi_k^R(x)| = -\infty .
\end{align}

To prove~\eqref{p1}, let $\chi_R \in C_\mathrm{c}^\infty((-R,R); [0,1])$ be symmetric and such that $\chi_R(x) = 1$ for $|x| \le R-1$ and $|\chi_R'(x)|\le 2$. Observe that 
\[   
\langle \chi_R \psi_0,  (H^R - E_0)  \chi_R \psi_0   \rangle  =   \frac{1}{2\beta} \int\! \rmd x\, |\chi_R' |^2 \psi_0^2.
\]
The claim then follows from Proposition~\ref{Prop_Semicl} (iv) and the variational characterization of the bottom of the spectrum.

In order to prove~\eqref{p2}, we set $f^R = \psi_0\chi_R$ with $\chi_R$ as above. Choosing $c \in (E^R_0,E^R_2 )$ and using the antisymmetry of $\psi_1^R$,
\[   
g^R :=  \id_{[c, \infty)} (H^R) f^R  = f^R - \langle f^R, \psi_0^R \rangle_{L^2((-R, R))} \psi_0^R . 
\]
We then have, for $R$ large enough, 
\begin{gather*}
\sup_{|x|\le L } \big|\psi_0(x) - \langle \psi_0, \psi_0^R \rangle_{L^2((-R, R))} \psi_0^R(x) \big|    =   \sup_{|x|\le L } \big|f^R(x) - \langle \psi_0, \psi_0^R \rangle_{L^2((-R, R))} \psi_0^R(x) \big| \\   \le    \sup_{|x|\le R}  |g^R(x)| +   |\langle \psi_0 (\chi_R -1), \psi_0^R \rangle_{L^2((-R, R))}|  \sup_{|x| \le R} \psi_0^R (x)   \\ \le   \sup_{|x|\le R}  |g^R(x)| +     2\|\psi_0^R\|^2_{H_0^1((-R, R))} \int_{R-1}^{\infty}\!\rmd x \, \psi_0(x) .
\end{gather*}
Observing that $\|\psi_0^R\|^2_{H_0^1((-R, R))} \le 2\beta E_0^R +1$ and using that $E^R_0 \to \bar E_0$ as $\beta \to \infty$,  the second term is super-exponentially small in view of Proposition~\ref{Prop_Semicl} (iv). To bound the first term we observe that $E^R_2 \to \bar E_1$ and apply Lemma~\ref{Chebyshev} (ii) with $E= E_0$ and $c\in (\bar E_0, \bar E_1 )$. We deduce that there exists a constant $C$ such that for any $\beta$ large enough
\[ 
\sup_{|x|\le R}  |g^R(x)|^2    \le  C \beta \|(H^R-E_0) f^R\|^2_{L^2((-R, R))}.
\]
The norm on the right hand side reads
\begin{align*}
& \| (H^R-E_0) f^R\|^2_{L^2((-R, R))}    =  \int_{-R}^R\! \rmd x  \, \bigg[     -\frac{1}{2\beta} (\chi_R \psi_0)''  +   (\beta W- E_0) \chi_R \psi_0    \bigg]^2   \\ &  = \int_{-R}^R \! \rmd x  \,  \bigg( \frac{1}{2\beta} \chi_R''\psi_0   +   \frac 1\beta \chi'_R \psi'_0 \bigg)^2 \le   \frac{C}{\beta^2 }   \bigg( \int_{-R}^R \! \rmd x  (\chi''_R)^2 \psi_0^2  +   \int_{-R}^R \! \rmd x\,  (\chi'_R)^2  (\psi'_0) ^2   \bigg). 
\end{align*}
The first term in the right-hand side is directly estimated by using Proposition~\ref{Prop_Semicl} (iv). For the second one, we integrate by parts, obtaining
\begin{gather*}
\int_{-R}^R \! \rmd x\,  (\chi'_R)^2  (\psi'_0) ^2  = -   \int_{-R}^R \! \rmd x\, \bigg[  ((\chi'_R)^2)'  \psi'_0 \psi_0  +  (\chi'_R)^2  \psi''_0\psi_0 \bigg]  \\ =   \int_{-R}^R \! \rmd x \bigg[  \frac 12 ((\chi'_R)^2)''  \psi^2_0  -  (\chi'_R)^2  \psi''_0\psi_0 \bigg] .      
\end{gather*}
We conclude by using $- \frac{1}{2\beta}\psi_0'' + \beta W  \psi_0 = E_0 \psi_0$ and again Proposition~\ref{Prop_Semicl} (iv).  

To prove~\eqref{p3}, we first observe that by the Sobolev embedding
\begin{gather*}
\sup_{|x|\le L} |\psi_k^R(x)|^2 \le \sup_{|x|\le R} |\psi_k^R(x)|^2 \le \|\psi_k^R\|_{H^1}^2 \le   \big( 1 + 2\beta (E_k^R- E_0 )\big),
\end{gather*}
so that
\[
\sup_{|x|\le L} \sum_{k= 2}^\infty \rme^{-  \ell_\beta  (E_k^R- E_0)} |\psi_k^R(x)|  \le   \sum_{k= 2}^\infty  \rme^{-\ell_\beta(E_k^R - E_0) + 2 \beta (E_k^R - E_0)}.
\]
Recalling we have assumed $\varliminf_\beta \beta^{-1}\ell_\beta = \infty$, by Weyl's Theorem \cite{HJ}, the min-max principle,  $E_k^R \ge  E_k$, and Proposition~\ref{Prop_Semicl} (iii) we conclude by arguing as in the proof of Lemma~\ref{traceformula2}.
\end{proof}

We next show that, with $\bb P_x^\beta$ probability super-exponentially close to one, the time average of $t\mapsto W(X_t)$ is arbitrarily small.  For the analysis of the sharp asymptotics, we need a quantitative statement that requires condition~\eqref{ipells}.

\vskip1.5cm
\begin{lemma}
\label{lemma_Pgrande}
$~$
\begin{itemize}
\item[(i)]
Fix $\delta>0$ and a sequence $(\ell_\beta)_{\beta>0}$
such that $\lim_\beta \ell_\beta =\infty$.  Then, for any $L>0$,
\[
\lim_{\beta \to\infty}  \frac{1}{\beta} \sup_{|x| \le L } \log\bb P^{\beta}_x \bigg(\frac 1{\ell_\beta} \int_0^{\ell_\beta}\!\rmd t\, W(X_t)  > \delta \bigg) = -\infty.
\]
\item[(ii)]
Fix $\gamma>\bar E_0 =1$ and a sequence $(\ell_\beta)_{\beta>0}$
such that $\lim_\beta\beta^{-2}\ell_\beta =\infty$.  Then, for any
$L>0$, 
\[
\lim_{\beta \to\infty}  \frac{1}{\beta} \sup_{|x| \le L} \log\bb P^{\beta}_x \bigg(\frac 1{\ell_\beta} \int_0^{\ell_\beta}\!\rmd t\, W(X_t)  > \frac\gamma\beta \bigg) = -\infty.
\]  
\end{itemize}
\end{lemma}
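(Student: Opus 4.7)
The plan is to use exponential Chebyshev with parameter $\lambda>0$ to be tuned, combine it with a Feynman-Kac representation of the resulting moment generating function, and finally apply the semiclassical machinery of Section~\ref{sec:3} to the perturbed Schr\"odinger operator that emerges. For any $A>0$ and $\lambda>0$,
\[
\bb P^\beta_x\Big(\tfrac 1{\ell_\beta}\int_0^{\ell_\beta}\!\rmd t\, W(X_t)> A\Big)\le \rme^{-\lambda\ell_\beta A}\, Q_\lambda(\ell_\beta,x),\qquad Q_\lambda(t,x) := \bb E^\beta_x\bigl[\rme^{\lambda\int_0^t W(X_s)\rmd s}\bigr].
\]
The Feynman-Kac formula together with the ground-state transformation $L_\beta=-\psi_{0,\beta}^{-1}(H_\beta-E_{0,\beta})\psi_{0,\beta}$, already exploited in the proof of Lemma~\ref{p:1}, yields the clean identity
\[
Q_\lambda(t,x)=\frac{1}{\psi_{0,\beta}(x)}\bigl[\rme^{-t(\tilde H_\beta-E_{0,\beta})}\psi_{0,\beta}\bigr](x),\qquad \tilde H_\beta := H_\beta-\lambda W = -\tfrac 1{2\beta}\tfrac{\rmd^2}{\rmd x^2}+(\beta-\lambda)W.
\]

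I would take $\lambda=c\beta$ with $c\in(0,1)$ small, to be fixed later. The scaling identity $\tilde H_\beta=\sqrt{1-c}\,H_{\beta\sqrt{1-c}}$ allows me to apply Proposition~\ref{Prop_Semicl}~(i) to the rescaled operator and conclude $\tilde E_{0,\beta}\to\sqrt{1-c}$, so that $E_{0,\beta}-\tilde E_{0,\beta}\to 1-\sqrt{1-c}$. Expanding in the eigenbasis $(\tilde\psi_k)$ of $\tilde H_\beta$ with eigenvalues $(\tilde E_k)$,
\[
[\rme^{-t(\tilde H_\beta-E_0)}\psi_0](x)=\sum_{k\ge 0}\rme^{-t(\tilde E_k-E_0)}\langle\tilde\psi_k,\psi_0\rangle\,\tilde\psi_k(x),
\]
and noting that by the parity symmetry $\phi\mapsto -\phi$ all odd-$k$ inner products vanish, the dominant contribution is the $k=0$ term $\rme^{t(E_0-\tilde E_0)}\langle\tilde\psi_0,\psi_0\rangle\tilde\psi_0(x)$. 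The tail $k\ge 2$ is controlled by combining the Sobolev bound $\|\tilde\psi_k\|_{L^\infty}\le C\sqrt{\beta(1+\tilde E_k)}$ with a Thompson-Goldstein estimate, exactly as in Lemma~\ref{traceformula2}, since $\tilde E_2-\tilde E_0$ remains bounded away from zero. Using in addition the Agmon-type bound $\psi_{0,\beta}(x)^{-1}\le\rme^{\beta U_L+o(\beta)}$ for $|x|\le L$ with $U_L := \sup_{|x|\le L}U(x)$, which follows from Proposition~\ref{Prop_Semicl}~(iii) and~\eqref{augdis}, this gives
\[
\frac{1}{\beta}\log Q_{c\beta}(\ell_\beta,x) \le \frac{\ell_\beta}{\beta}\bigl(1-\sqrt{1-c}\bigr) + U_L+o(1)
\]
uniformly in $|x|\le L$.

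Putting these estimates together, for $|x|\le L$,
\[
\frac{1}{\beta}\log\bb P^\beta_x\Big(\tfrac 1{\ell_\beta}\int_0^{\ell_\beta}\!\rmd t\, W(X_t)>A\Big) \le -cA\ell_\beta + \frac{\ell_\beta}{\beta}\bigl(1-\sqrt{1-c}\bigr)+U_L+o(1).
\]
For~(i) I would set $A=\delta$: the leading term $-c\delta\ell_\beta$ is super-exponential in $\ell_\beta$ and, once $c$ is chosen small enough that $1-\sqrt{1-c}<c\delta$, it dominates all other contributions, so the right-hand side tends to $-\infty$ as soon as $\ell_\beta\to\infty$. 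For~(ii) I would set $A=\gamma/\beta$: the bound becomes $-(\ell_\beta/\beta)[c\gamma-(1-\sqrt{1-c})]+U_L+o(1)$; since $\gamma>1$, the bracket is strictly positive for $c$ small, as it reduces to $c(\gamma-1/2)+O(c^2)$, and the assumption $\ell_\beta/\beta^2\to\infty$—comfortably stronger than the $\ell_\beta/\beta\to\infty$ that is actually enough to beat the constant $U_L$—yields the super-exponential decay. The hard part of the argument is precisely this pointwise control of the ratio $\tilde\psi_0(x)/\psi_0(x)$: although each factor decays exponentially in the classically forbidden region, their ratio can grow like $\rme^{\beta(1-\sqrt{1-c})U(x)}$, which is exactly why a quantitative growth hypothesis on $\ell_\beta$ is used in~(ii).
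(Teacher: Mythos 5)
Your proposal is correct and takes a genuinely different route from the paper. The paper's proof applies It\^o's formula to $U_\beta(X_t)$, uses the Riccati equation~\eqref{riccati} together with the bounds in Proposition~\ref{Prop_Semicl} to estimate the finite variation part, and then invokes a martingale maximal inequality (Lemma~2 in~\cite{Mauro}) to bound $\sup_{t\le\ell}M^\beta_t$. Your proof instead passes through an exponential Chebyshev bound, a Feynman--Kac identity expressing the moment generating function via the perturbed operator $\tilde H_\beta = H_\beta - \lambda W$, and the scaling identity $\tilde H_\beta = \sqrt{1-c}\, H_{\beta\sqrt{1-c}}$ which lets you recycle the semiclassical estimates of Section~\ref{sec:3}. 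Both are sound; the two approaches buy different things. The paper's route is self-contained on the probabilistic side and does not require tracking the spectral data of a second, $c$-dependent, operator, at the cost of a quadratic loss in the maximal inequality: with $\eta\sim(\gamma-1)/\beta$ the exponent scales like $\ell_\beta/\beta^2$, which is precisely why the paper assumes $\ell_\beta/\beta^2\to\infty$ and $\gamma>1$. Your spectral route has no such loss: the rate in the exponent is $\frac{\ell_\beta}{\beta}\bigl[c\gamma-(E_{0,\beta}-\tilde E_{0,\beta})\bigr]$, and since $E_{0,\beta}-\tilde E_{0,\beta}\to 1-\sqrt{1-c}$ the quantity in brackets is a fixed positive constant for $c$ small provided $\gamma>1/2$. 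So your argument actually yields~(ii) under the weaker hypotheses $\ell_\beta/\beta\to\infty$ and $\gamma>1/2$, a genuine (if not needed here) refinement, and identifies $\gamma=1/2$ as the sharp threshold, consistent with the fact that $\beta\int W\,\psi_0^2\,\rmd x\to\tfrac12 \sqrt{W''(1)}\cdot\tfrac12= \tfrac12$ in the Gaussian approximation.

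One small slip in the write-up: in part~(i) you impose the side condition ``$c$ chosen small enough that $1-\sqrt{1-c}<c\delta$.'' This condition cannot be met when $\delta\le 1/2$, since $\bigl(1-\sqrt{1-c}\bigr)/c\ge 1/2$ for all $c\in(0,1)$; but it is also unnecessary. In your final display the competing terms are $-c\delta\,\ell_\beta$ and $\tfrac{\ell_\beta}{\beta}(1-\sqrt{1-c})$, and these differ by an extra factor of $\beta$: the first is always dominant as $\beta\to\infty$, for any fixed $c\in(0,1)$. So~(i) holds for any $\delta>0$ directly, without tuning $c$. Beyond that, the key technical points you invoke — the scaling identity, the parity argument killing odd-$k$ modes, the Thompson--Goldstein bound for $\tilde H_\beta$ (or equivalently for $H_{\beta\sqrt{1-c}}$), and the Agmon-type control of $\tilde\psi_0(x)/\psi_0(x)\lesssim \rme^{\beta(1-\sqrt{1-c})U(x)}$ on compacts via Proposition~\ref{Prop_Semicl}~(iii) — all check out.
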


\begin{proof}
By It\^o's formula,
\begin{equation}
\label{martingala}
U_\beta (X_t) - U_\beta(x) + \int_0^t\!\rmd r\, \Big( U'_\beta(X_r)^2 - \frac{1}{2\beta} U''_\beta(X_r) \Big) = M^\beta_t,  
\end{equation}
where $(M^\beta_t)_{t\ge 0}$ is a continuous $\bb P^{\beta}_x$-martingale with quadratic variation
\[
\langle M^\beta\rangle_t = \frac 1\beta \int_0^t\!\rmd r\,
U'_\beta(X_r)^2. 
\]
We claim that there exists $C\in (0,\infty)$ and for any $L>0$ a constant $C_L$ such that for any $\beta$ large enough 
\[
\frac{1}{2\beta} U_\beta'' \le \frac 12 (U_\beta')^2 + C,
\qquad U_\beta \ge - C,    \qquad     \max_{|x|\le L } U_\beta(x) \le C_L.
\]
Indeed, the first bound follows directly from Proposition~\ref{Prop_Semicl} (i), Riccati's equation~\eqref{riccati}, and the positivity of $W$. The second one follows from items (iii) and (iv) in Proposition~\ref{Prop_Semicl} and the last statement again from Proposition~\ref{Prop_Semicl} (iii).

Equation~\eqref{martingala} thus implies that for each $x\in [-L, L]$ with $\bb P_x^{\beta}$ probability one we have 
\[
\frac \beta2 \langle M^\beta \rangle_t \le M^\beta_t + C (t +1) + C_L,    \quad t\ge 0.
\] 
By using the martingale inequality in~\cite[Lemma 2]{Mauro}, we deduce that for any $\eta,\ell>0$  
\begin{equation}
\label{usom}
\sup_{|x| \le L} \bb P^{\beta}_x \bigg(\sup_{t\in[0,\ell]} M^\beta_t > \eta \ell \bigg) \le \exp\Big\{ - \frac{\beta \eta^2\ell}{4(\eta +C)+4(C+C_L)\ell^{-1}} \Big\}. 
\end{equation}

We next observe that by~\eqref{riccati} and~\eqref{martingala}
\[
\begin{split}   
& \int_0^\ell\!\rmd t\, W(X_t) = \int_0^\ell\!\rmd t\, \Big( \frac 12 U_\beta'(X_t)^2 - \frac {1}{2\beta} U_\beta''(X_t) + \frac 1\beta E_{0,\beta} \Big) \\ & \qquad \le M^\beta_\ell - \big( U_\beta(X_\ell) - U_\beta(x)\big)
+ \frac 1\beta E_{0,\beta} \le M^\beta_\ell + C+ C_L + \frac 1\beta E_{0,\beta} \ell.
\end{split}
\]
Recalling that by Proposition~\ref{Prop_Semicl} (i) $ E_{0,\beta}\to \bar E_0$, the proof of both statements is achieved by combining the previous displayed bound with~\eqref{usom}. 
\end{proof}

The next statement will be used in the proof of the lower bound in Theorem~\ref{mainthmldp}.

\begin{lemma} 
\label{Propsemigruppo}
Fix $\delta\in (0,1)$ and set  $I^\pm_\delta := (\pm1-\delta, \pm 1+\delta)$. There exists $\gamma>0$ such that 
\[
\lim_{\beta \to \infty }\bb P_y^\beta (X_t  \in I_\delta^\pm) = 1  \quad \text{ uniformly for $y\in I_\delta^\pm$ and $t\in [\gamma \beta, \ell_\beta]$.}
\]
\end{lemma}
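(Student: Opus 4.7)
By the $X\mapsto -X$ symmetry I treat only the $+$ case. The strategy is to use the spectral representation of the diffusion killed at the saddle $\{0\}$, combined with a Markov-property reduction to a smaller neighborhood of $+1$. Fix $\delta'\in(0,\delta)$ small enough that $U(1\pm\delta')<U(0)=C_\mathrm{W}/2$ strictly and let $\tau_0=\inf\{s\ge 0:X_s=0\}$. By antisymmetry of the odd eigenfunctions of $H_\beta$, the functions $\sqrt 2\,\psi_{2j+1}|_{(0,\infty)}$, $j\in\bb Z_+$, with eigenvalues $E_{2j+1}$ form an orthonormal eigenbasis of $H_\beta$ on $L^2((0,\infty))$ with Dirichlet condition at $0$, and the ground state transformation yields, for every $y>0$ and every Borel $A\subset(0,\infty)$,
\[
\bb P^\beta_y(X_s\in A,\,\tau_0>s)\;=\;2\sum_{j\ge 0}e^{-s(E_{2j+1}-E_0)}\frac{\psi_{2j+1}(y)}{\psi_0(y)}\int_A\psi_{2j+1}\psi_0\,dx.
\]

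I first prove the statement for $y\in I^+_{\delta'}$ by analyzing this expansion. By Corollary~\ref{t:3.2} the integrals $\int_{I^+_{\delta'}}\psi_0\psi_1\,dx$ and $\int_0^\infty\psi_0\psi_1\,dx$ both tend to $1/2$, and by Proposition~\ref{Prop_Semicl}~(ii) together with \eqref{ipell} one has $s(E_1-E_0)\sim A_\mathrm{W}\sqrt\beta\,e^{-\beta(C_\mathrm{W}-\alpha)}\to 0$ uniformly for $s\le\ell_\beta$, so $e^{-s(E_1-E_0)}\to 1$. Theorem~\ref{efsplitting} combined with a standard Agmon pointwise lower bound $\psi_0(y)\ge c\beta^{-N}e^{-\beta U(y)}$ (uniform on $I^+_{\delta'}$ since $U<C_\mathrm{W}/2$ there) gives $\psi_1(y)/\psi_0(y)\to 1$ uniformly on $I^+_{\delta'}$. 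The $j\ge 1$ tail is controlled via the Sobolev estimate \eqref{Sobolev} and Lemma~\ref{traceformula2}: once $\gamma$ is chosen large enough relative to $\sup_{y\in I^+_{\delta'}}U(y)$, the tail is super-exponentially small in $\beta$, uniformly in $y\in I^+_{\delta'}$, after dividing by the Agmon lower bound on $\psi_0(y)$. Thus both $\bb P^\beta_y(\tau_0>s)\to 1$ and $\bb P^\beta_y(X_s\in I^+_{\delta'},\tau_0>s)\to 1$, so $\bb P^\beta_y(X_s\in I^+_{\delta'})\to 1$ uniformly for $y\in I^+_{\delta'}$ and $s\in[\gamma\beta/2,\ell_\beta]$.

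To extend this to a general $y\in I^+_\delta$, I apply the Markov property at time $\gamma\beta/2$:
\[
\bb P^\beta_y(X_t\in I^+_\delta)\;\ge\;\bb P^\beta_y\bigl(X_{\gamma\beta/2}\in I^+_{\delta'}\bigr)\cdot\inf_{y'\in I^+_{\delta'}}\bb P^\beta_{y'}\bigl(X_{t-\gamma\beta/2}\in I^+_{\delta'}\bigr),
\]
and the second factor tends to $1$ uniformly by the previous paragraph. For the first, the deterministic flow $\dot\xi=-U_\beta'(\xi)$ attracts any $y\in I^+_\delta$ to $+1$ in $O(1)$ time since $+1$ is the only stable equilibrium in $(0,\infty)$, and the Freidlin--Wentzell large deviation principle applied to \eqref{diff} on $[0,\gamma\beta/2]$ bounds the probability of any non-negligible deviation by $e^{-\beta c(\delta,\delta')}$ for some $c(\delta,\delta')>0$, uniformly in $y\in I^+_\delta$. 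The main obstacle is the uniform control of the spectral tail, which relies on the Agmon pointwise lower bound on $\psi_0$ (a standard ingredient in semiclassical analysis not isolated in Section~\ref{sec:3}); the introduction of the auxiliary smaller neighborhood $I^+_{\delta'}$ is essential because it guarantees the strict inequality $U<C_\mathrm{W}/2$ needed to absorb the prefactor $\beta^{3/2}(E_1-E_0)$ coming from Theorem~\ref{efsplitting}.
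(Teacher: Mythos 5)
Your argument is a correct variant of the paper's proof but differs in two respects: you expand the \emph{killed} kernel on $(0,\infty)$ with Dirichlet boundary at $0$ (so only odd eigenfunctions appear), whereas the paper expands the full kernel and isolates the $\psi_0$- and $\psi_1$-contributions jointly; and you interpose an auxiliary smaller neighborhood $I^+_{\delta'}$ before passing to $I^+_\delta$ via a Markov step, whereas the paper works directly on $I^\pm_\delta$. Both approaches hinge on the same inputs: Theorem~\ref{efsplitting}, Proposition~\ref{Prop_Semicl}~(iii) (which already furnishes the pointwise lower bound $\psi_0(y)\ge \rme^{-\beta(U(y)+o(1))}$ uniformly on compacts — you needn't reach outside Section~\ref{sec:3} for this), Corollary~\ref{t:3.2}, and the Sobolev/Thompson--Goldstein control of the high-$k$ tail. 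Your interposition of $\delta'$ is in fact a genuine sharpening: the direct estimate needs $\sup_{y\in I_\delta^\pm} U(y)<C_\mathrm{W}/2=U(0)$ to absorb the prefactor $\beta^{3/2}(E_{1}-E_{0})\rme^{2\beta\max U}$, and since $U(1+\delta)$ grows cubically this strict inequality can fail on the outer edge $1+\delta$ for $\delta$ close to $1$; restricting the spectral estimate to $I^+_{\delta'}$ with $\delta'$ small and then extending by the Markov property avoids that constraint entirely.

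There is one genuine technical slip in your extension step. You invoke Proposition~\ref{LDPdiff} on the time interval $[0,\gamma\beta/2]$, but that LDP is stated on $C([0,T])$ with $T$ fixed; applying it over a $\beta$-dependent horizon is not justified as written. The repair is standard and short: pick a fixed $T_0=T_0(\delta,\delta')$ so large that the deterministic flow $\dot\xi=-U'(\xi)$ drives every $y\in I^+_\delta$ into $I^+_{\delta'/2}$ by time $T_0$, apply the Markov property at time $T_0$ rather than $\gamma\beta/2$, use Proposition~\ref{LDPdiff} on $[0,T_0]$ (a fixed interval) to get $\inf_{y\in I^+_\delta}\bb P^\beta_y(X_{T_0}\in I^+_{\delta'})\to 1$, and then apply your Step~1 to the remaining factor $\inf_{y'\in I^+_{\delta'}}\bb P^\beta_{y'}(X_{t-T_0}\in I^+_{\delta'})$ for $t\ge T_0+\gamma\beta/2$, which eventually covers $t\ge\gamma'\beta$ for any $\gamma'>\gamma/2$. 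With that correction the proof is sound.
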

\begin{proof}
Using the antisymmetry of $\psi_{1, \beta}$ we have,
\[
\begin{split}
& \bb P_y^\beta (X_t  \in I_\delta^\pm) = \frac{1}{\psi_{0,\beta}(y)} \int_{I_\delta^\pm}\!\rmd x\, g(x,y;t) \psi_{0,\beta}(x) \\ & \qquad = \int_{I_\delta^\pm}\!\rmd x\, \psi_{0,\beta}(x)^2 + \rme^{-t(E_{1,\beta} -E_{0,\beta})} \frac{ | \psi_{1,\beta}(y)|}{ \psi_{0,\beta}(y) }
\int_{I_\delta^\pm}\!\rmd x\, \psi_{0,\beta}(x) |\psi_{1,\beta}(x)| + R_\beta(y,t), 
\end{split}
\]
where
\[
R_\beta(y,t)  = \sum_{k=2}^\infty \rme^{-t(E_{k, \beta} -E_{0,\beta})} \frac{ \psi_{k,\beta}(y)}{ \psi_{0,\beta}(y) }\int_{I_\delta^\pm}\!\rmd x\, \psi_{0,\beta}(x) \psi_{k,\beta}(x)
\]
and 
\[   
\frac{| \psi_{1,\beta}(y)|}{ \psi_{0,\beta}(y) }   =       1 +  \frac{  \psi_{0, \beta}(y) ( |  \psi_{1,\beta}(y)| -  \psi_{0,\beta}(y)) }{\psi_{0, \beta}^2 (y)}.
\]
By Theorem~\ref{efsplitting} and Proposition~\ref{Prop_Semicl} (iii),
\[   
\sup_{y\in I_\delta^{\pm}} \bigg|\frac{  \psi_{0, \beta}(y) ( |  \psi_{1,\beta}(y)| -  \psi_{0,\beta}(y)) }{\psi_{0, \beta}^2 (y)}  \bigg|  \le  C \beta (E_{1, \beta} - E_{0, \beta})  \exp \Big\{ 2 \beta   \max_{y\in I_\delta^\pm} U(y) \Big\}, 
\]
which vanishes as $\beta\to \infty$. Moreover, by Proposition~\ref{Prop_Semicl} (ii),   $\rme^{-t(E_{1,\beta} -E_{0,\beta})} \to 1$ for $t\le \ell_\beta$ as $\beta \to \infty$ and, by~\eqref{p01cc}, 
\[   
\lim_{\beta\to \infty} \int_{I_\delta^\pm}\!\rmd x\, \psi_{0,\beta}(x)^2   = \lim_{\beta\to \infty}   \int_{I_\delta^\pm}\!\rmd x\, \psi_{0,\beta}(x) |\psi_{1,\beta}(x)|  = \frac 12.   
\]
Finally, in view of Proposition~\ref{Prop_Semicl} (iii) and~\eqref{Sobolev}, arguing as in Lemma~\ref{traceformula2}, 
\begin{gather*}    
\max_{y\in I_\delta^{\pm}} |R_\beta(y,t)|  \le   C \rme^{C_\delta \beta}   \sum_{k = 2}^\infty (1+ \beta E_{k, \beta}) \rme^{- t (E_{k, \beta} - E_{0, \beta})}  \\  \le C \exp \Big\{ 2 \beta   \max_{y\in I_\delta^\pm} U(y) \Big\}     \rme^{ - (t-\beta) (E_{2, \beta} - E_{0, \beta}) } ,
\end{gather*}
which vanishes provided $\gamma$ is large enough. 
\end{proof} 

For $T>0$  let $\bb P_x^{\beta,T}$ be the marginal of $\bb P_x^\beta$ on $C([0,T])$. In view of the singularity of the potential in the limit $\beta\to\infty$, the diffusion~\eqref{diff} does not fit in the standard Friedlin-Wentzell scheme \cite{FW}. Nonetheless, as we next prove, $\bb P_x^{\beta,T}$ satisfies a large deviation principle with rate function $I_T^x\colon C([0,T])\to[0,\infty]$ given by
\begin{equation}
\label{ratefuncI}
I_T^x(X) =
\begin{cases}
\displaystyle{ \frac 12 \int_0^T\!\!\rmd t\, \big[\dot X_t^2  + U'(X_t)^2\big] + U(X_T) - U(X_0)} & \textrm{if } X\in  H^1_x([0,T]), \\ +\infty& \textrm{otherwise,}
\end{cases}
\end{equation}
where
\[
H^1_x([0,T]) := \Big\{ X\in C([0,T])\colon \int_0^T\!\rmd t\, {\dot{X}}^2_t <\infty,\;\; X_0=x\Big\}
\]
and $U$ has been defined in~\eqref{augdis}. We emphasize that $I_T^x$ is well defined because $(U')^2$ is continuous. Note also that, informally, $I_T^x$ is equal to the standard, but ill-defined, Friedlin-Wentzell rate function $(1/2)\int_0^T\!\rmd t\,\big[\dot X_t - U'(X_t)\big]^2$.

\begin{proposition} 
\label{LDPdiff}
Fix $T>0$. The sequence $(\bb P_x^{\beta,T})_{\beta>0}$ satisfies, uniformly for $x$ in compact sets, a large deviation principle with speed $\beta$ and rate function $I_T^x$. Namely, for each $x\in \bb R$, each sequence $x_\beta\to x$, each closed $\mc C\subset C([0,T])$, and each open $\mc O\subset C([0,T])$,
\[
\varlimsup_{\beta \to \infty} \frac 1\beta \log \bb P_{x_\beta}^{\beta,T}(\mc C) \le  - \inf_{X\in\mc C} I_T^x(X) , \qquad \varliminf_{\beta \to \infty} \frac 1\beta \log \bb P_{x_\beta}^{\beta,T}(\mc O) \ge  - \inf_{X\in\mc O} I_T^x(X).      
\]
\end{proposition}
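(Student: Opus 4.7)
The plan is to represent the law $\bb P_x^{\beta,T}$ as a Girsanov tilt of the law $\bb W_{x_\beta}^{\beta,T}$ of the rescaled Brownian motion $x_\beta + \beta^{-1/2} w_t$ on $[0,T]$, and then to apply Varadhan's principle starting from Schilder's LDP for $\bb W_{x_\beta}^{\beta,T}$. First I would apply Girsanov's theorem to express
\[
\frac{\rmd \bb P_x^{\beta,T}}{\rmd \bb W_{x}^{\beta,T}}(X) = \exp\Big\{ -\beta \int_0^T U_\beta'(X_t)\,\rmd X_t - \frac{\beta}{2}\int_0^T U_\beta'(X_t)^2\,\rmd t \Big\},
\]
and then use It\^o's formula to rewrite the stochastic integral as $U_\beta(X_T) - U_\beta(x) - \frac{1}{2\beta}\int_0^T U_\beta''(X_t)\,\rmd t$. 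Combined with the Riccati identity \eqref{riccati}, this yields the clean expression
\[
\frac{\rmd \bb P_x^{\beta,T}}{\rmd \bb W_{x}^{\beta,T}}(X) = \exp\{ \beta G_\beta(X)\}, \qquad G_\beta(X) := U_\beta(x)-U_\beta(X_T)  - \int_0^T W(X_t)\,\rmd t + \beta^{-1} E_{0,\beta} T.
\]

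Next I would identify the candidate limiting functional $G(X) := U(X_0)-U(X_T) - \int_0^T W(X_t)\,\rmd t$, observing that by Proposition~\ref{Prop_Semicl} (iii) and (i) we have $G_\beta\to G$ uniformly on $\{\sup_{[0,T]} |X_t| \le R\}$ for every $R$, so that on such sets $G_\beta$ is in particular uniformly bounded. Schilder's theorem applied to the reference family gives an LDP on $C([0,T])$ with speed $\beta$ and good rate function $J_T^x(X)=\frac12\int_0^T \dot X_t^2\,\rmd t$ for $X\in H^1_x([0,T])$, $+\infty$ otherwise; note that the shift of starting point from $x_\beta$ to $x$ is harmless because uniform closeness of $x_\beta\to x$ translates into a superexponentially small error in $\bb W$-probability. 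Since $(U'(y))^2 = 2W(y)$ pointwise (both sides are continuous, and this is obtained from Riccati's equation by passing $\beta\to\infty$), the combined rate function is
\[
J_T^x(X) - G(X) = \frac 12 \int_0^T \dot X_t^2\,\rmd t + \int_0^T W(X_t)\,\rmd t + U(X_T)-U(X_0) = I_T^x(X),
\]
and the normalization $\int \rme^{\beta G_\beta}\rmd\bb W_x^{\beta,T}=1$ forces $\inf I_T^x = 0$ (it is attained on the downhill trajectory solving $\dot X = -U'(X)$ from $x$).

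The execution then splits into the two LDP bounds. For the upper bound on closed $\mc C$, I would invoke Lemma~\ref{p:1} to discard the event $\{\sup|X|>R\}$ at superexponential cost, then use the uniform upper bound of $G_\beta$ on $\{\sup|X|\le R\}$ and Schilder's upper bound for the closed set $\mc C\cap \{\sup |X|\le R\}$; letting $R\to\infty$ and invoking the good-rate property of $I_T^x$ (which follows from compactness of sublevel sets of $J_T^x$ together with continuity of the perturbation $G$) yields the claim. For the lower bound on open $\mc O$, given $\tilde X\in\mc O\cap H^1_x$ with $I_T^x(\tilde X)<\infty$, I would pick a sup-norm neighborhood $\mc V\subset \mc O$ of $\tilde X$, contained in some $\{\sup|X|\le R\}$, and bound below
\[
\bb P_x^{\beta,T}(\mc O) \ge \inf_{X\in \mc V} \rme^{\beta G_\beta(X)} \, \bb W_{x_\beta}^{\beta,T}(\mc V),
\]
using $G_\beta\to G$ uniformly on $\mc V$ and Schilder's lower bound for $\bb W_{x_\beta}^{\beta,T}(\mc V)$. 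The main obstacle will be the non-smoothness of the limiting potential $U$ at the saddle $0$: it forces the integration-by-parts step to be done at the $U_\beta$ level (where everything is smooth) and the identification $(U')^2=2W$ and $I_T^x = J_T^x - G$ to be made only in the limit; it also means that the downhill trajectory achieving $I_T^x = 0$ must be constructed piecewise on either side of $0$, relying on the continuity of $U$ and $(U')^2$ to make the telescoping of the boundary terms well defined. Uniformity of the estimates in $x$ over compacts follows because $U_\beta(x_\beta)\to U(x)$ and Lemma~\ref{p:1} is already uniform in $|x|\le L$.
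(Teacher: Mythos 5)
Your proposal is correct and follows essentially the same route as the paper: the same Schilder reference measure, the same Radon--Nikodym exponent (you derive it by Girsanov, It\^o and the Riccati equation~\eqref{riccati}, the paper by Feynman--Kac and the ground state transformation, with identical outcome), and the same Laplace--Varadhan scheme with the exponential tail condition supplied by Lemma~\ref{p:1}. The paper merely condenses your two explicit bounds into a citation of the corollary of Varadhan's lemma in Dembo--Zeitouni, observing that it extends to a $\beta$-dependent functional converging uniformly on compacts.
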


\begin{proof}
Let $\bb P_x^{0,\beta,T}$ be the law of $x + \beta^{-1/2}\,w$ on $C([0,T])$. By the Schilder theorem the sequence $(\bb P_x^{0,\beta,T})_{\beta>0}$ satisfies, uniformly for $x$ in compact sets,  a large deviation principle with speed $\beta$ and rate function
\[
J_T^x(X) = \begin{cases} \displaystyle{ \frac 12 \int_0^T\!\!\rmd t\, {\dot X}_t^2 } & \textrm{if } X\in  H^1_x([0,T]), \\ +\infty& \textrm{otherwise.}
\end{cases}
\]

Let  $L=L_\beta =  \frac{1}{2\beta} \frac{\rmd^2}{\rmd x^2} -   U_\beta'  \frac{\rmd }{\rmd x} $ be the generator of~\eqref{diff} and denote by $p_t^\beta (x,y)$, $t>0$, $x, y\in \bb R$, the kernel of  the semigroup generated by $L$. Recalling that $g_t^\beta(x, y)$ is the kernel of the semigroup generated by $- (H_\beta - E_{0, \beta})$,  the ground state transformation yields  $p_t^\beta (x,y) =   \psi_{0, \beta } (x)^{-1}  g_t^\beta (x,y) \psi_{0, \beta} (y) $. By the Feynman-Kac representation for the kernel $g^\beta_t(x, y)$ we deduce 
\begin{equation}
\label{rnqp}
\frac{\rmd\bb P_x^{\beta,T}}{\rmd\bb P_x^{0, \beta,T}} =  \exp\big\{- \beta  \Phi^x_\beta\big\}, 
\end{equation}
in which 
\[
\Phi^x_\beta (X) = \int_0^T\!\rmd t\, \Big[W(X_t) -  \frac 1 \beta E_{0,\beta} \Big] + U_\beta(X_T)  - U_\beta(x). 
\]
By items (i) and (iii) in Proposition~\ref{Prop_Semicl}, $\Phi^{x}_\beta$ converges uniformly on compact subsets of $C([0, T])$
to
\[
\Phi^x (X) := \int_0^T\!\rmd t\, W(X_t) + U(X_T) - U(x) = \frac 12
\int_0^T\!\rmd t\, U'(X_t)^2  + U(X_T) - U(x), 
\]
uniformly for $x$ in compact subsets of $\bb R$. 

The proof is now completed by applying the corollary of the Laplace-Varadhan principle stated in~\cite[Ex. 4.3.11]{DZ}. Observe indeed that the latter can be generalized to the present setting in which $\Phi^x_\beta$ depends on $\beta$, but converges uniformly in compacts. It remains to show the tail condition~\cite[4.3.2]{DZ}, that in view of~\eqref{rnqp} can be restated as
\begin{equation}
\label{exptight}
\lim_{R\to \infty} \varlimsup_{\beta\to \infty}  \sup_{|x| \le L}\frac{1}{\beta} \log \bb P_x^{\beta,T} \big( \|X\|_\infty> R \big) = -\infty, \quad  L>0, 
\end{equation} 
and therefore follows directly from Lemma~\ref{p:1}.
\end{proof}

The next statement provides the sharp asymptotics of the hitting time of zero for the stationary process associated to~\eqref{diff}. It is the key ingredient to derive the sharp asymptotics of the $\phi^4_1$ measure described in Theorem~\ref{theorem_sharp}. Recall that $\pi_\beta (\rmd x) = \psi^2_0 \rmd x $ is the invariant measure of the diffusion~\eqref{diff}. We denote by $\bb P^\beta_{\pi_\beta}$ the law of the corresponding stationary process.

\begin{theorem}
\label{t:sht}
Let $\tau$ be the hitting time of zero and  $(T_{k,\beta})_{\beta>0}$, $k=0,1$,  be sequences such that $\lim_\beta \beta^{-3/2} T_{0,\beta} =\infty$ and $\lim_\beta \bar\ell_\beta^{-1} T_{1,\beta}=0$.  Then, uniformly for $t\in[T_{0,\beta},T_{1,\beta}]$,
\[
\bb P^\beta_{\pi_\beta} (  \tau < t ) \sim   \frac {2\,t}{\bar\ell_\beta} = {t A_\mathrm{W} \sqrt \beta } \rme^{-\beta C_\mathrm{W}}. 
\] 
\end{theorem}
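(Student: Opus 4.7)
The plan is to use the spectral decomposition associated with the Dirichlet realization of $H_\beta$ on $(0,\infty)$ to obtain an exact series representation for the survival probability $\bb P^\beta_{\pi_\beta}(\tau > t)$, then extract the leading term with the help of Theorem~\ref{efsplitting}. By the evenness of $\pi_\beta$ and the $X\mapsto -X$ symmetry of the diffusion,
\[
\bb P^\beta_{\pi_\beta}(\tau > t) = 2\int_0^\infty \psi_{0,\beta}^2(x)\,\bb P^\beta_x(\tau > t)\,\rmd x .
\]
Since antisymmetry gives $\psi_{2j+1}(0)=0$, the family $\{\sqrt 2\,\psi_{2j+1}|_{(0,\infty)}\}_{j\ge 0}$ is a complete orthonormal system of eigenfunctions for the Dirichlet realization $H_\beta^{(0,\infty)}$, with eigenvalues $(E_{2j+1})_{j\ge 0}$. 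Applying the ground state transformation to the diffusion killed at the origin and integrating against $2\psi_0^2$ then yields
\[
\bb P^\beta_{\pi_\beta}(\tau > t) = 4\sum_{j\ge 0}\rme^{-t(E_{2j+1}-E_0)}\Big(\int_0^\infty\!\psi_0\,\psi_{2j+1}\,\rmd x\Big)^2 .
\]

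I would write this sum as $A\,\rme^{-t(E_1-E_0)} + R_\beta(t)$, with $A := 4\bigl(\int_0^\infty\psi_0\psi_1\bigr)^2$. For the leading coefficient, I use the identity $\tfrac 12-\int_0^\infty\!\psi_0\psi_1=\int_0^\infty\!\psi_0(\psi_0-\psi_1)\,\rmd x$, which follows from $\int_0^\infty\!\psi_0^2=\tfrac 12$, and split at some fixed $R>0$: for $x\ge R$ the exponential decay in Proposition~\ref{Prop_Semicl}(iv) gives a super-exponentially small contribution, while on $[0,R]$ the uniform bound $\psi_0|\psi_0-\psi_1|\le C\beta^{3/2}(E_1-E_0)$ from Theorem~\ref{efsplitting} produces a contribution of order $\beta^{3/2}(E_1-E_0)\sim\beta^2\rme^{-\beta C_\mathrm{W}}$. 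Combined with $\int_0^\infty\!\psi_0\psi_1\sim \tfrac 12$ from Corollary~\ref{t:3.2}, this yields $|1-A|=O(\beta^2\rme^{-\beta C_\mathrm{W}})$.

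For the remainder, Bessel's inequality applied to $\psi_0|_{(0,\infty)}$ in the basis $\{\sqrt 2\,\psi_{2j+1}|_{(0,\infty)}\}$ gives $4\sum_{j\ge 0}\bigl(\int_0^\infty\!\psi_0\psi_{2j+1}\bigr)^2\le 1$, while Proposition~\ref{Prop_Semicl}(i) yields $E_{2j+1}-E_0\ge E_3-E_0\to 2$. Hence $R_\beta(t)\le \rme^{-t(E_3-E_0)}$, which is super-exponentially small for $t\ge T_{0,\beta}\gg\beta^{3/2}$. Writing
\[
\bb P^\beta_{\pi_\beta}(\tau<t) = (1-A) + A\bigl(1-\rme^{-t(E_1-E_0)}\bigr) - R_\beta(t),
\]
the hypothesis $\bar\ell_\beta^{-1}T_{1,\beta}\to 0$ together with $\bar\ell_\beta(E_1-E_0)\to 2$ from Proposition~\ref{Prop_Semicl}(ii) forces $t(E_1-E_0)\to 0$ uniformly in $t\in[T_{0,\beta},T_{1,\beta}]$, so the middle term linearizes to $t(E_1-E_0)(1+o(1))$. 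The other two terms are $o(t(E_1-E_0))$ by the bounds above, giving $\bb P^\beta_{\pi_\beta}(\tau<t)\sim t(E_1-E_0)\sim 2t/\bar\ell_\beta$.

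The main obstacle is the quantitative estimate $|1-A|=O(\beta^2\rme^{-\beta C_\mathrm{W}})$: the soft asymptotic $\int_0^\infty\!\psi_0\psi_1\sim \tfrac 12$ from Corollary~\ref{t:3.2} would be insufficient, since one must beat the lower bound $T_{0,\beta}(E_1-E_0)\gtrsim\beta^2\rme^{-\beta C_\mathrm{W}}$. This is exactly where the novel splitting estimate of Theorem~\ref{efsplitting} is indispensable, and its $\beta^{3/2}$ factor precisely dictates the sharp lower threshold $T_{0,\beta}\gg\beta^{3/2}$ appearing in the hypothesis.
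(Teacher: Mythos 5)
Your proposal is correct and follows essentially the same route as the paper's proof: symmetry reduction to the half-line Dirichlet problem, spectral expansion over the odd eigenfunctions, isolation of the $k=1$ term whose coefficient is controlled by Theorem~\ref{efsplitting} together with Proposition~\ref{Prop_Semicl}~(ii) and (iv), and a super-exponentially small tail. The only (harmless) deviation is in the tail estimate, where you use Bessel's inequality and the spectral gap $E_3-E_0\to 2$ in place of the paper's Cauchy--Schwarz argument via Lemma~\ref{traceformula2}.
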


\begin{proof}
By symmetry, 
\[
\bb P_{\pi_\beta}^\beta (\tau< t )  =  1  -  2\int_0^\infty\!\rmd x\, \psi_{0}^2(x) u (t, x), 
\]
where $u(t,x) := \bb P^\beta_x (\tau > t)$ solves 
\[    
\begin{cases}  \partial_t u  =  \frac{1}{2\beta} \partial^2_x u - U'_\beta    \partial_x  u, &  \text{on } \bb (0, \infty)  \times  (0, \infty),   \\ u(0, x) = 1,   &  x>0, \\ u(t,0)  = 0,   & t>0. 
\end{cases}
\]
Denote by $L^0_\beta$ the selfadjoint realization on $L^2(\bb R_+, 2\pi_\beta)$ of  $\frac{1}{2\beta} \frac{\rmd^2}{\rmd x^2} -   U_\beta'  \frac{\rmd }{\rmd x} $ with zero Dirichlet  boundary condition in $x=0$. By symmetry and the ground state transformation, the eigenvalues of $L^0_\beta$ are $ - \lambda_{k} = E_{2k-1, \beta} - E_{0, \beta}$, $k\ge 1$, and the corresponding  normalized eigenfunctions are $f_k =    \psi_{2k-1}/ \psi_0$, $k\ge 1$. Hence
\[
u(t, x) = \sum_{k=1}^\infty e^{- t \lambda_{k} }  \langle  f_k, 1\rangle_{L^2(\bb R_+, 2 \pi_\beta)} f_k(x).
\]
By Cauchy-Schwartz inequality and Lemma~\ref{traceformula2} there exists a constant $C$ such that, for $\beta$ large enough,
\[
\sum_{k=2}^\infty \rme^{- t \lambda_{k} } | \langle  f_k ,1\rangle_{L^2(\bb R_+, 2 \pi_\beta)} | \int_0^\infty\!\rmd x\,  | f_k(x)| \psi_{0}^2(x) \le  C \rme^{- t },  
\]
so that
\begin{equation} 
\label{rest_Dirichlet}
\bigg| \bb P^\beta_{\pi_\beta} (  \tau < t )    -  \bigg( 1   - 2\rme^{- t \lambda_{1} }  \langle  f_1, 1\rangle_{L^2(\bb R_+, 2 \pi_\beta)}  \int_0^\infty\!\rmd x\, f_1(x)  \psi_{0}^2(x)\bigg) \bigg| \le C \rme^{-   t }  .    
\end{equation}
On the other hand, 
\begin{gather*}   
1 - 2 \rme^{- t \lambda_{1} }  \langle  f_1, 1\rangle_{L^2(\bb R_+, 2 \pi_\beta)}  \int_0^\infty\!\rmd x\,  f_1(x)  \psi_{0}^2(x)  \\  = 1 - 4 \rme^{- t ( E_{1, \beta} - E_{0, \beta} ) }   \bigg( \int_0^\infty\!\rmd x\,  \psi_1(x) \psi_0(x) \bigg)^2  \\ =  1 - 4 \rme^{- t ( E_{1, \beta} - E_{0, \beta} ) }   \bigg( \frac 1 2  +  \int_0^\infty\!\rmd x\,  [\psi_1(x) - \psi_0(x)] \psi_0(x) \bigg)^2 .   
\end{gather*}
We conclude by using items (ii) and (iv) in Proposition~\ref{Prop_Semicl}, and Theorem~\ref{efsplitting}. 
\end{proof}

We finally show that under the measure $\bb P^\beta_{\pi_\beta}$ the random variable  $\tau /\bar\ell_\beta $ converges in law to an exponential  random variable with parameter $2$. 

\begin{theorem}
\label{t:sht2}
Let $\tau$ the hitting time of zero.  Then, for each $t \ge 0$,
\[
\lim_{\beta \to \infty}  \bb P^\beta_{\pi_\beta} \Big(  \frac{\tau}{ \bar \ell_\beta}  >  t  \Big) = \rme^{-2t} .
\]   
\end{theorem}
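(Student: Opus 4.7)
The plan is to observe that the spectral identity derived in the proof of Theorem~\ref{t:sht} already contains the exponential rate; only the Taylor expansion $1-\rme^{-z}\sim z$ needs to be removed. More precisely, inspecting the derivation leading to~\eqref{rest_Dirichlet}, one sees that the bound
\[
\bigg|\bb P^\beta_{\pi_\beta}(\tau<s) - \Big(1-4\,\rme^{-s(E_{1,\beta}-E_{0,\beta})}\Big(\int_0^\infty\!\rmd x\,\psi_{1,\beta}(x)\psi_{0,\beta}(x)\Big)^{\!2}\Big)\bigg|\le C\rme^{-s}
\]
holds for all $s\ge 1$ and $\beta$ large, with a constant $C$ independent of $s$; the identity $2\langle f_1,1\rangle_{L^2(\bb R_+,2\pi_\beta)}\int_0^\infty f_1\psi_0^2\,\rmd x=4(\int_0^\infty\psi_1\psi_0)^2$ is algebraic and carries no restriction on $s$. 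No bound of the form $s\ll\bar\ell_\beta$ enters the argument: that restriction was needed in Theorem~\ref{t:sht} only in order to linearize the exponential.

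I would then set $s=t\,\bar\ell_\beta$ and pass to the limit term by term. First, since $\bar\ell_\beta\to\infty$, the error $C\rme^{-t\bar\ell_\beta}$ is super-exponentially small. Second, by the definition~\eqref{barl} of $\bar\ell_\beta$ together with the sharp gap asymptotics in Proposition~\ref{Prop_Semicl}~(ii),
\[
t\,\bar\ell_\beta\,(E_{1,\beta}-E_{0,\beta})=t\cdot\frac{2}{A_\mathrm{W}\sqrt\beta}\rme^{\beta C_\mathrm{W}}\cdot\big(A_\mathrm{W}\sqrt\beta\,\rme^{-\beta C_\mathrm{W}}\big)(1+o(1))\longrightarrow 2t,
\]
so $\rme^{-t\bar\ell_\beta(E_{1,\beta}-E_{0,\beta})}\to\rme^{-2t}$. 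Third, by~\eqref{p01cc} in Corollary~\ref{t:3.2},
\[
\int_0^\infty\!\rmd x\,\psi_{1,\beta}(x)\psi_{0,\beta}(x)\longrightarrow\frac 12.
\]
Combining these three facts gives
\[
\bb P^\beta_{\pi_\beta}\Big(\frac{\tau}{\bar\ell_\beta}>t\Big)=4\,\rme^{-t\bar\ell_\beta(E_{1,\beta}-E_{0,\beta})}\Big(\int_0^\infty\!\psi_1\psi_0\Big)^{\!2}+o(1)\longrightarrow 4\cdot\rme^{-2t}\cdot\frac 14=\rme^{-2t},
\]
as required.

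In this scheme there is really no substantial obstacle beyond a careful bookkeeping check: one must verify that the constant in the $C\rme^{-s}$ tail bound in~\eqref{rest_Dirichlet} does not depend on $s$ (only on $\beta$, uniformly as $\beta\to\infty$), which follows because it was produced via Cauchy--Schwarz and Lemma~\ref{traceformula2} with $T=1$ independently of the final time. Once this uniformity is in hand, the rest is the one-line limit computation above, and the case $t=0$ is trivial.
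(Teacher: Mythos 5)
Your proposal is correct and follows essentially the same route as the paper: both rest on the spectral bound~\eqref{rest_Dirichlet} evaluated at $s=t\bar\ell_\beta$ (whose error term $C\rme^{-s}$ is indeed uniform in $s\ge 1$ by Lemma~\ref{traceformula2}), the gap asymptotics $\bar\ell_\beta(E_{1,\beta}-E_{0,\beta})\to 2$, and the limit $\int_0^\infty\psi_1\psi_0\to\frac12$, using the same algebraic identity $2\langle f_1,1\rangle_{L^2(\bb R_+,2\pi_\beta)}\int_0^\infty f_1\psi_0^2=4(\int_0^\infty\psi_1\psi_0)^2$. The only cosmetic difference is that the paper justifies $\int_0^\infty\psi_1\psi_0\to\frac12$ via Proposition~\ref{Prop_Semicl}~(iv) and Theorem~\ref{efsplitting} rather than Corollary~\ref{t:3.2}, but either works.
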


\begin{proof} 
We use the notation introduced in the proof of Theorem~\ref{t:sht}. As follows from~\eqref{rest_Dirichlet}, for $t>0$,
\[
\lim_{\beta \to \infty}  \bigg| \bb P^\beta_{\pi_\beta} (  \tau >  t \bar\ell_\beta ) -  2\rme^{- t \bar\ell_\beta  \lambda_{1} }  \langle  f_1, 1\rangle_{L^2(\bb R_+, 2 \pi_\beta)}  \int_0^\infty\! \rmd x\, f_1(x)  \psi_{0}^2(x)  \bigg) \bigg| =  0.    
\]
By~\eqref{barl} and Proposition~\ref{Prop_Semicl} (ii), $\bar\ell_\beta \lambda_1 =   \bar\ell_\beta (E_{1, \beta}- E_{0, \beta}) \to 2 $. Moreover, by using Proposition~\ref{Prop_Semicl} (iv) and Theorem~\ref{efsplitting},
\[  
\lim_{\beta \to \infty} 2 \langle  f_1, 1\rangle_{L^2(\bb R_+, 2 \pi_\beta)}  \int_0^\infty\!\rmd x\, f_1(x)  \psi_{0}^2(x) =   \lim_{\beta \to \infty} 4  \bigg( \int_0^\infty\!\rmd x\, \psi_1(x) \psi_0(x) \bigg)^2   = 1 ,    
\]
which concludes  the proof. 
\end{proof}

\section{Large deviation principle}
\label{sec:5}

In this section we complete the proof of Theorem~\ref{mainthmldp} by following the classical strategy of large deviation estimates. Namely, we first show that the family $(\mu_\beta)_{\beta>0}$, as defined in \eqref{1.10}, is exponentially tight, then we prove the upper bound on compacts, and finally derive the lower bound for neighborhoods of elements $X$ such that $I(X)$, as defined in~\eqref{Imod}, is finite. Some of the  bounds here obtained are in fact weaker than the sharp bound in Theorem~\ref{t:sht}, but they rely on the assumption~\eqref{ipell} rather than~\eqref{ipells}.  Within this section we fix once for all $p\in[1,\infty)$.

\subsection*{Super-exponential estimates}

Recall that $\imath_\beta$ denotes both the dilation $(0,\ell_\beta) \ni t\mapsto s=t/\ell_\beta \in (0,1)$ and its lift to functions, i.e., $(\imath_{\beta} X )(s) = X(\ell_\beta s )$, $s\in (0,1)$. According to the notation in Section~\ref{sec:4}, $\bb P_x^{\beta,\ell_\beta}$ is the law of the diffusion process satisfying~\eqref{diff} on $[0,\ell_\beta]$ with initial condition $x\in \bb R$, while $\mu_{\beta, \ell_\beta}$ is the $\phi^4_1$ measure on the interval $[0, \ell_\beta]$ with free boundary conditions on the end points. We regard both these measures as probabilities on $C([0,\ell_\beta])$.  We first show that events which have super-exponentially small probability with respect to $\bb P_x^{\beta,\ell_\beta}$ have also super-exponentially small probability with respect to $\mu_{\beta,\ell_\beta}$.

\begin{lemma}
\label{t:5.1}
Let $(\mc A_{\beta,k})_{k\ge 1}$ be a family of Borel sets in  $C([0,\ell_\beta])$ such that for each $L>0$
\[
\lim_{k\to\infty}\varlimsup_{\beta\to\infty} \sup_{|x|\le L} \frac1\beta \log \bb P_x^{\beta,\ell_\beta}\big( \mc A_{\beta,k} \big) =-\infty.
\]
Then
\[
\lim_{k\to\infty}\varlimsup_{\beta\to\infty} \frac1\beta \log \mu_{\beta,\ell_\beta}\big( \mc A_{\beta,k} \big) =-\infty.
\]
\end{lemma}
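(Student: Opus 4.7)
The plan is to express $\mu_{\beta,\ell_\beta}$ as a reweighting of the law $\bb P^\beta_{\pi_\beta}$ of the stationary diffusion restricted to $[0,\ell_\beta]$, thereby reducing the estimate to one for the free process $\bb P^\beta_x$ that is already controlled by hypothesis. Both measures, when conditioned on the endpoints $(X_0, X_{\ell_\beta})$, produce the same Markov bridge $\bb P^{\beta,\ell_\beta}_{X_0,X_{\ell_\beta}}$, so their Radon-Nikodym derivative depends only on the endpoints. Combining the ground state transformation $p^\beta_t(x,y) = g_t(x,y)\psi_{0,\beta}(y)/\psi_{0,\beta}(x)$ with the definition $\varrho(x,y) = g_{\ell_\beta}(x,y)/Z_{\ell_\beta}$ given in~\eqref{defwpT} with $T=0$, one finds
\[
\frac{\rmd\mu_{\beta,\ell_\beta}}{\rmd\bb P^\beta_{\pi_\beta}}(X) = \frac{1}{Z_{\ell_\beta}\,\psi_{0,\beta}(X_0)\,\psi_{0,\beta}(X_{\ell_\beta})}.
\]

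I would then fix $R>0$ and split according to whether $(X_0, X_{\ell_\beta}) \in K_R := [-R,R]^2$, obtaining
\[
\mu_{\beta,\ell_\beta}(\mc A_{\beta,k}) \le \frac{\bb P^\beta_{\pi_\beta}(\mc A_{\beta,k})}{Z_{\ell_\beta}\,\big(\inf_{|y|\le R}\psi_{0,\beta}(y)\big)^2} + \wp_{\beta,\ell_\beta}(K_R^{\mathrm c}).
\]
The prefactor in the first summand is at most $\rme^{\beta K_R}$ for a constant $K_R=K_R(W)$: by Proposition~\ref{Prop_Semicl}(iii) one has $\inf_{|y|\le R}\psi_{0,\beta}(y) \ge \rme^{-\beta(\max_{|y|\le R} U(y)+1)}$ for $\beta$ large, while $Z_{\ell_\beta}$ is only polynomial in $\beta$ thanks to~\eqref{p01bb} and~\eqref{stimaZpw}. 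To control $\bb P^\beta_{\pi_\beta}(\mc A_{\beta,k})$ itself I would use, for each $L>0$,
\[
\bb P^\beta_{\pi_\beta}(\mc A_{\beta,k}) \le \sup_{|x|\le L} \bb P_x^{\beta,\ell_\beta}(\mc A_{\beta,k}) + \pi_\beta(\{|x|>L\}),
\]
the first term being super-exponentially small by hypothesis and the second by the pointwise decay of $\psi_{0,\beta}$ in Proposition~\ref{Prop_Semicl}(iv) once $L$ is taken large. The residue $\wp_{\beta,\ell_\beta}(K_R^{\mathrm c})$, being a functional only of the endpoints, is super-exponentially small as $R\to\infty$ uniformly in $k$ thanks to the exponential tightness proved in Proposition~\ref{sharpwp}(i), which applies since $\ell_\beta\to\infty$ under~\eqref{ipell}.

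The order of limits will therefore be: fix $R$, take $\varlimsup_{\beta\to\infty}$, then $k\to\infty$ (which kills the first summand for every fixed $R$ because the $\rme^{\beta K_R}$ prefactor is dominated by the super-exponential decay of $\bb P^\beta_{\pi_\beta}(\mc A_{\beta,k})$), and finally $R\to\infty$ (which kills the endpoint tail). The conceptual core of the proof is the clean Radon-Nikodym identification above, reducing a conditional computation under $\mu_{\beta,\ell_\beta}$ to a bound for the unconditional diffusion. The main technical nuisance is that $1/\psi_{0,\beta}$ is unbounded, forcing both the cut-off of the endpoints to $K_R$ and the quantitative decay of $\psi_{0,\beta}$ from Proposition~\ref{Prop_Semicl}(iv) to enter the estimate.
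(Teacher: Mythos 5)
Your proof is correct, and it takes a mildly different route than the paper's. You formulate the comparison as a Radon--Nikodym derivative of $\mu_{\beta,\ell_\beta}$ with respect to the stationary law $\bb P^\beta_{\pi_\beta}$, noting that both measures share the same Markov bridge conditionally on $(X_0,X_{\ell_\beta})$, so the density is a function of the endpoints alone; your identification $\rmd\mu_{\beta,\ell_\beta}/\rmd\bb P^\beta_{\pi_\beta}=1/\bigl(Z_{\ell_\beta}\psi_{0,\beta}(X_0)\psi_{0,\beta}(X_{\ell_\beta})\bigr)$ is correct. The paper instead starts directly from the disintegration~\eqref{rme}, restricts the endpoints to $[-L,L]^2$ via Proposition~\ref{sharpwp}(i), and rewrites $\wp_\beta$ through the kernel identity~\eqref{repwp}; after integrating the bridge against $\bb P^{\beta,\ell_\beta}_x(X_{\ell_\beta}\in\rmd y)$ one lands directly on $\sup_{|x|\le L}\bb P^{\beta,\ell_\beta}_x(\mc A_{\beta,k})$. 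This is the same Radon--Nikodym idea, but it never detours through $\bb P^\beta_{\pi_\beta}$ and therefore needs only a single cutoff parameter. Your detour costs you a second one: after dropping the restriction $(X_0,X_{\ell_\beta})\in K_R$ in the first term, you bound $\bb P^\beta_{\pi_\beta}(\mc A_{\beta,k})$ by $\sup_{|x|\le L}\bb P^{\beta,\ell_\beta}_x(\mc A_{\beta,k})+\pi_\beta(|x|>L)$, and the tail term is independent of $k$ with $\frac{1}{\beta}\log$ asymptotics of order $-2\gamma L$, finite for fixed $L$. Consequently the statement ``$k\to\infty$ kills the first summand for every fixed $R$'' is not literally right: you must also send $L\to\infty$, and it must be done before $R\to\infty$ since $K_R$ grows superlinearly (so choosing $L=R$ would not work for the quartic $W$). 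The correct order is $\varlimsup_\beta$, then $k\to\infty$, then $L\to\infty$, then $R\to\infty$; this is implicit in your earlier remark ``once $L$ is taken large'' and is a harmless omission. The cleanest repair is to keep the event restriction $\{X_0\in[-R,R]\}$ before bounding, giving $\bb P^\beta_{\pi_\beta}\bigl(\mc A_{\beta,k}\cap\{X_0\in[-R,R]\}\bigr)\le\sup_{|x|\le R}\bb P^{\beta,\ell_\beta}_x(\mc A_{\beta,k})$ directly, which eliminates the extra cutoff and the tail bound on $\pi_\beta$ and makes your argument coincide with the paper's up to the Radon--Nikodym reformulation.
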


\begin{proof}
By the representation~\eqref{rme} and Proposition~\ref{sharpwp} (i), there exists $C_L \to \infty$ as $L\to \infty$ such that
\[
\mu_{\beta,\ell_\beta}\big( \mc A_{\beta,k} \big) \le  \int_{[-L,L]^2}\! \wp_\beta(\rmd x ,\rmd y)\, \bb P^{\beta,\ell_\beta}_{x,y}\big( \mc A_{\beta,k} \big) + \rme^{-\beta C_L}.
\]
By the ground state transformation
\begin{equation}
\label{repwp}
\wp_\beta(\rmd x\, \rmd y) = \frac{1}{Z_\beta} \frac{\psi_{0}(x)} {\psi_{0}(y)} \, \bb P_x^{\beta,\ell_\beta} (X_{\ell_\beta}\in \rmd y) \, \rmd x ,   
\end{equation}
with $Z_\beta=\int\!\rmd a\, \rmd b\, g_{\ell_\beta}(a,b)$. We thus deduce
\[
\mu_{\beta,\ell_\beta} \big( \mc A_{\beta,k} \big) \le \frac{1}{Z_\beta}  \sup_{|y|\le L} \frac{1}{\psi_{0}(y)} \sup_{|x| \le L} \bb P^{\beta,\ell_\beta}_{x} \big( \mc A_{\beta,k} \big) \, \|\psi_{0}\|_{L^1} + \rme^{-\beta C_L}.  
\]
The statement now follows from~\eqref{stimaZpw}, Proposition~\ref{Prop_Semicl} (iii), and Corollary~\ref{t:3.2}.
\end{proof}

As we next show, the marginal of the probability $\mu_{\beta,\ell_\beta}$ on $C([T,\ell_\beta-T])$ is super-exponentially close to $\bb P^\beta_{\pi_\beta}$ provided $T\gg \beta$. For the application to the sharp asymptotics, we also need an analogous statement for the probability $\bb P^\beta_{0}$, the law of the diffusion~\eqref{diff} starting from zero.  We emphasize that these estimates depend on the symmetry with respect to the map $X\mapsto -X$.

\begin{lemma}
\label{lemma_staz0}
Let $(T_\beta)_{\beta>0}$, $(\ell_\beta)_{\beta>0}$ be sequences such that $\beta^{-1}T_\beta \to \infty$ and  $\ell_\beta> 2 T_\beta$. Then
\[
\begin{split}
&\lim_{\beta\to \infty} \frac 1\beta \log \big| \mu_{\beta, \ell_\beta} (\mc B)
-   \bb P^\beta_{\pi_\beta} (\mc B) \big|=-\infty, \qquad \mc B\in  \sigma(\{ X_t\}_{t\in [T_\beta, \ell_\beta - T_\beta]}) ,  \\ &\lim_{\beta\to \infty} \frac 1\beta \log \big| \bb P^\beta_{0}(\mc B) -   \bb P^\beta_{\pi_\beta} (\mc B) \big|=-\infty, \qquad  \ \ \ \mc B\in  \sigma(\{ X_t\}_{t\ge T_\beta}). 
\end{split}
\]
\end{lemma}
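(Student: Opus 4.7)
The plan is to reduce each of the two estimates to an application of Proposition~\ref{sharpwp}, using the Markov property of the diffusion~\eqref{diff} to isolate the relevant single- or two-time marginals and identify the remaining dynamics as shared between the two measures.

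For the first statement, I would show that both $\mu_{\beta,\ell_\beta}$ and $\bb P^\beta_{\pi_\beta}$, when restricted to the $\sigma$-algebra generated by $\{X_t\}_{t\in[T_\beta,\ell_\beta-T_\beta]}$, disintegrate against the \emph{same} conditional law, namely the diffusion bridge from $X_{T_\beta}$ to $X_{\ell_\beta-T_\beta}$ of length $\ell_\beta-2T_\beta$. Consequently the two measures differ only through the joint law of $(X_{T_\beta},X_{\ell_\beta-T_\beta})$. Starting from the representation~\eqref{rme}, the Markov property of $\bb P^{\beta,\ell_\beta}_{x,y}$ and the ground-state transformation $p_t(a,x)=g_t(a,x)\psi_0(x)/\psi_0(a)$ give, after a direct computation in which the $\psi_0$ factors telescope, that this joint law under $\mu_{\beta,\ell_\beta}$ has density $\varrho^{T_\beta}_{\beta,\ell_\beta}$ from~\eqref{defwpT}. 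Under $\bb P^\beta_{\pi_\beta}$, the stationary marginal $\pi_\beta=\psi_0^2 \rmd x$ composed with the Markov kernel $p_{\ell_\beta-2T_\beta}$ produces exactly $\bar\varrho^{T_\beta}_{\beta,\ell_\beta}$ from~\eqref{defbarwpT}. Integrating against any common regular conditional kernel yields the pointwise bound $|\mu_{\beta,\ell_\beta}(\mc B)-\bb P^\beta_{\pi_\beta}(\mc B)|\le \|\wp^{T_\beta}_{\beta,\ell_\beta}-\bar\wp^{T_\beta}_{\beta,\ell_\beta}\|_{\mathrm{TV}}$, and Proposition~\ref{sharpwp}~(ii) closes the argument.

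For the second statement, I would apply the Markov property at time $T_\beta$ to both $\bb P^\beta_0$ and $\bb P^\beta_{\pi_\beta}$: for $\mc B\in\sigma(\{X_t\}_{t\ge T_\beta})$, writing $\mc B$ as the pull-back of an event in the shifted filtration, each measure disintegrates as $\int \nu_{T_\beta}(\rmd x)\,\bb P^\beta_x(\,\cdot\,)$, with $\nu_{T_\beta}$ the law of $X_{T_\beta}$ under the given initial condition. Starting from $X_0=0$, the ground-state transformation yields $\nu_{T_\beta}(\rmd x)=q^{T_\beta}_\beta(x)\rmd x=\pi^{T_\beta}_\beta(\rmd x)$ as defined in~\eqref{qT}, while stationarity gives $\nu_{T_\beta}=\pi_\beta$ in the other case. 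Hence $|\bb P^\beta_0(\mc B)-\bb P^\beta_{\pi_\beta}(\mc B)|\le \|\pi^{T_\beta}_\beta-\pi_\beta\|_{\mathrm{TV}}$, and Proposition~\ref{sharpwp}~(iii) concludes.

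The proof is essentially bookkeeping; the analytic substance is entirely contained in Proposition~\ref{sharpwp}, whose super-exponential bounds in turn rely on the $X\mapsto -X$ symmetry (killing the contribution of $\psi_1$) combined with the semiclassical spectral information of Section~\ref{sec:3}. The only point requiring care is the $\psi_0$-bookkeeping in the disintegration of $\mu_{\beta,\ell_\beta}$, but the computation is completely elementary once~\eqref{rme} and the ground-state transformation are in hand.
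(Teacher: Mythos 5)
Your proposal is correct and follows essentially the same route as the paper: both arguments disintegrate the two measures over the common bridge law $\bb P^{\beta,\ell_\beta-2T_\beta}_{x,y}$ (resp.\ over $\bb P^\beta_x$ after time $T_\beta$), identify the endpoint marginals as $\wp^{T_\beta}$ versus $\bar\wp^{T_\beta}$ (resp.\ $\pi^{T_\beta}$ versus $\pi_\beta$) via the ground-state transformation, and conclude by the total variation bounds of Proposition~\ref{sharpwp}~(ii) and~(iii). No gaps.
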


\begin{proof}
For notation convenience, we set $\ell= \ell_\beta$ and $T=T_\beta$.
Recalling that $\bar{\wp}^{T} = \bar{\wp}^{T}_{\beta, \ell} $ is the
probability measure on $\bb R^2$ with density $\bar{\varrho}^{T}$
defined in~\eqref{defbarwpT}, we claim that
\[
\bb P^\beta_{\pi_\beta} (\mc B) = \int\! \bar{\wp}^{T}  (\rmd x \, \rmd y)\,  \bb P^{\beta, \ell-2T}_{x,y}\big(\theta_{-T}\mc B\big),       
\]
where $\theta_t$ is the time shift $(\theta_t X)_{t'}  = X_{t'- t}$. Indeed, by the ground state transformation, $\bb P_x^\beta(X_t\in \rmd y) = \psi_0(x)^{-1} g_t(x,y) \psi_0(y) \rmd y$. By the Markov property and the explicit form \eqref{defbarwpT} we deduce the claim.

On the other hand, recalling that $\wp^{T} = \wp^{T}_{\beta, \ell} $ is the probability measure on $\bb R^2$ with density $\varrho^{T}$ defined in~\eqref{defwpT}, 
\[
\mu_{\beta, \ell} (\mc B) = \int\!\wp^{T}  (\rmd x\, \rmd y)\, \bb P^{\beta, \ell-2T}_{x,y} \big(\theta_{-T}\mc B\big).
\]
The first statement thus follows from Proposition~\ref{sharpwp} (ii).

To prove the second statement, recall that $\pi^T$ is the probability on $\bb R$ with density defined in~\eqref{qT}. In particular, again by the ground state transformation and the Markov property,
\[
\bb P_0^\beta (\mc B) = \int \!\pi^T(\rmd x) \,\bb P_x^\beta\big(\theta_{-T}\mc B\big).    
\]
The second statement thus follows from Proposition~\ref{sharpwp} (iii).
\end{proof}

\subsection*{Exponential tightness} 

Within the general strategy of~\cite{FW}, we next introduce a discrete time Markov chain that allows to reduce the analysis of the asymptotic behavior of $\mu_{\beta, \ell_\beta}$ to that of its marginal on sub-intervals of $[0, \ell_\beta]$ of fixed size. Given $\rho=(\rho_1, \rho_2)$ with $0<\rho_1< \rho_2<1$ and $X\in C([0, \ell_\beta])$, we recursively define the sequence of stopping times $(\sigma_k)_{k\in \bb Z_+}$ by
\begin{equation} 
\label{sigmak}
\begin{split}
& \sigma_{2k} =  \inf \{ t\in [\sigma_{2k-1}, \ell_\beta] \colon X_t \in \{-1+\rho_1, 1-\rho_1\} \} \wedge \ell_\beta, \\  & \sigma_{2k+1} =  \inf \{ t\in [\sigma_{2k}, \ell_\beta] \colon X_t \in \{-1+\rho_2,1 -\rho_2\} \}
\wedge \ell_\beta,
\end{split}
\end{equation}
where it is understood $\sigma_{-1} = 0 $.  Moreover, let $\mc N_{\beta} =  \mc N_{\beta, \rho} = \sup \{ k\ge 0 \colon \sigma_k < \ell_\beta \}$ so that $\sigma_k = \ell_\beta$ for $k > \mc N_\beta$. We then set $Y_k = X_{\sigma_k}$ and consider  the family  $(Y_k)_{k=0}^{\mc N_\beta}$ taking values in the set $\{-1+\rho_1, -1+\rho_2, 1-\rho_2, 1-\rho_1\}$. 

By  the strong Markov property, when $X$ is sampled according to $\bb P_x^{\beta, \ell_\beta}$, the family $(Y_k)$ is a homogeneous discrete time Markov chain with transition probability given by 
\[      
\begin{pmatrix} 0 &  1 & 0&   0  \\ 1-p &   0 & p & 0  \\ 0 & p & 0 & 1- p \\  0 & 0 & 1 & 0     \end{pmatrix},   
\]
where, denoting by $\tau_z$ the hitting time of $z$,  
\[
p = p_{\rho, \beta} = \bb P_{-1+\rho_2}^\beta ( \tau_{1-\rho_1} < \tau_{-1+\rho_1})  =  \bb P_{1-\rho_2}^\beta ( \tau_{-1+\rho_1} < \tau_{1-\rho_1}).
\]
We next show that the number of jumps of this chain is bounded by  $\ell_\beta$ with probability super-exponentially close to one.

\begin{lemma}
\label{superexp}
For any $\rho=(\rho_1, \rho_2)$ with $0<\rho_1< \rho_2<1$,
\[
\varlimsup_{\beta\to\infty} \sup_{x\in \bb R} \frac 1\beta \log \bb P_x^{\beta, \ell_\beta} (\mc N_{\rho, \beta} >  \ell_\beta) = - \infty.
\]
\end{lemma}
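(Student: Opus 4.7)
The strategy is a pigeonhole argument on the odd-indexed increments $\xi_{2k+1}:=\sigma_{2k+1}-\sigma_{2k}$ of the chain $(Y_k)$, combined with the sample-path LDP of Proposition~\ref{LDPdiff}. By construction $\sigma_0$ is the first entry into the \emph{outer} set $\{\pm(1-\rho_1)\}$; hence, whenever $\mc N_{\rho,\beta}\ge 1$, one has $Y_{2k}\in\{\pm(1-\rho_1)\}$ and $Y_{2k+1}\in\{\pm(1-\rho_2)\}$, so each $\xi_{2k+1}$ is the time to reach the \emph{inner} set $\{\pm(1-\rho_2)\}$ starting from an outer point. The key observation is that the deterministic flow $\dot X_t=-U'(X_t)$ -- the unique zero of the rate function $I^y_\cdot$ in~\eqref{ratefuncI} -- starting from an outer point $y$ is monotonically attracted to the nearest well $\pm 1$ and thus never visits the inner set, by the strict monotonicity of $U$ on $(0,1)$ and on $(-1,0)$. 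Since $I^y_{c_0}$ is a good rate function and the set of paths in $C([0,c_0])$ that hit $\{\pm(1-\rho_2)\}$ is closed, the infimum $J_y(c_0):=\inf\{I^y_{c_0}(X):X\text{ hits }\{\pm(1-\rho_2)\}\}$ is attained and strictly positive for every $c_0>0$. Fixing $c_0:=3$ and $A:=\tfrac 12\min_{y\in\{\pm(1-\rho_1)\}}J_y(c_0)>0$, Proposition~\ref{LDPdiff} yields, for all $\beta$ large,
\[
\sup_{y\in\{\pm(1-\rho_1)\}}\bb P^\beta_y\big(\sigma_1\le c_0\big)\le \rme^{-\beta A}.
\]

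Set $N_\beta:=\lfloor\ell_\beta\rfloor+1$ and let $\mc U_\beta\subset\{1,\dots,N_\beta\}$ be the set of odd indices, so $|\mc U_\beta|\ge N_\beta/2$. Iterating the previous bound via the strong Markov property at each $\sigma_{2k}$ gives, for every $S\subset\mc U_\beta$,
\[
\bb P^{\beta,\ell_\beta}_x\big(\xi_k\le c_0\ \forall\, k\in S\big)\le \rme^{-\beta A|S|}.
\]
On the event $\{\mc N_{\rho,\beta}>\ell_\beta\}$ we have $\sigma_{N_\beta}<\ell_\beta$ and hence $\sum_{k\in\mc U_\beta}\xi_k\le\ell_\beta$. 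Were fewer than $M_\beta:=\lfloor\ell_\beta/7\rfloor$ of the uphill increments strictly smaller than $c_0$, the remaining would contribute at least $(|\mc U_\beta|-M_\beta)c_0\ge \tfrac{15}{14}\ell_\beta - O(1) > \ell_\beta$ for $\beta$ large, a contradiction. A union bound over subsets $S\subset\mc U_\beta$ of size $M_\beta$ then gives
\[
\bb P^{\beta,\ell_\beta}_x\big(\mc N_{\rho,\beta}>\ell_\beta\big)\le \binom{N_\beta}{M_\beta}\rme^{-\beta A M_\beta}\le 2^{N_\beta}\rme^{-\beta A\ell_\beta/7},
\]
and taking logarithms, dividing by $\beta$, and using $\ell_\beta/\beta\to\infty$ from~\eqref{ipell} produces the desired super-exponential decay. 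Uniformity in $x\in\bb R$ follows from one additional strong Markov step at $\sigma_0$, which reduces the statement to starting points in the outer set.

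\emph{Main obstacle.} The delicate step is the strict positivity of $J_y(c_0)$ uniformly over the four outer starting points. This rests entirely on the one-dimensional geometry: the strict monotonicity of the Agmon distance $U$ on each half-well confines the zero-cost gradient-descent trajectory from $\pm(1-\rho_1)$ to the basin of its own well, precluding any $I=0$ path from reaching the inner set regardless of how large $c_0$ is. Once this qualitative confinement is established, Proposition~\ref{LDPdiff} supplies the effective exponential rate $A$, and the remaining pigeonhole and union bound argument is routine.
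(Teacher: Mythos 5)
Your proof is correct in substance, and it takes a genuinely different route from the paper's.

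\textbf{Comparison.} Both arguments exploit the same structural observation: on $\{\mc N_{\rho,\beta}>\ell_\beta\}$ the ``uphill'' increments $\xi_{2k+1}=\sigma_{2k+1}-\sigma_{2k}$ (from the outer set $\{\pm(1-\rho_1)\}$ to the inner set $\{\pm(1-\rho_2)\}$) are forced to sum to at most $\ell_\beta$. Beyond that, the two proofs diverge. The paper applies the exponential Chebyshev inequality to $\sum\xi_{2k+1}$, uses the $X\mapsto -X$ symmetry to deduce these increments are i.i.d.\ and independent of the starting point $x$, and then proves the exponential moment bound $\bb E_{-1+\rho_1}^{\beta,\ell_\beta}(\rme^{-\gamma\tau})\le\rme^{-\beta C_\rho}$ by solving the associated boundary value problem directly via the ground state transformation and the convexity of $h=f\psi_\beta$ (which uses the semiclassical estimate $E_{0,\beta}\to 1$). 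You instead deduce from Proposition~\ref{LDPdiff} (which is already available at this point in the paper) that each uphill transition in a fixed time $c_0$ has super-exponentially small probability, and then combine a pigeonhole count on the odd increments with a union bound over subsets. Your path via the sample-path LDP is more robust and avoids the explicit ODE analysis; the paper's path gives a quantitative rate $C_\rho$ in terms of the Agmon distance essentially for free, but this is not needed here.

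\textbf{Two minor points worth tightening.} First, the displayed bound $\bb P^{\beta,\ell_\beta}_x(\xi_k\le c_0\ \forall k\in S)\le\rme^{-\beta A|S|}$ is not literally correct, because $\sigma_{2k}$ and $\sigma_{2k+1}$ are truncated at $\ell_\beta$: if $\sigma_{2k}=\ell_\beta$ then $\xi_{2k+1}=0\le c_0$ holds trivially and $Y_{2k}$ need not lie in $\{\pm(1-\rho_1)\}$. The iteration via the strong Markov property goes through once one intersects with $\{\mc N_{\rho,\beta}>\ell_\beta\}$ (or, equivalently, with $\bigcap_{k\in S}\{\sigma_{k-1}<\ell_\beta,\ \sigma_k<\ell_\beta\}$), on which every $\sigma_{2k}$ with $2k+1\le N_\beta$ is a genuine hitting time of the outer set; since the ultimate goal is to bound $\bb P_x^{\beta,\ell_\beta}(\mc N_{\rho,\beta}>\ell_\beta)$, this intersection is harmless. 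Second, the positivity of $J_y(c_0)$ deserves one more sentence: the rate function $I^y_{c_0}$ formally equals $\frac12\int_0^{c_0}(\dot X_t+U'(X_t))^2\,\rmd t$ away from $\{X_t=0\}$, and while $U'$ jumps at the origin, the zero-cost trajectory $\dot X=-U'(X)$ starting from $y\in\{\pm(1-\rho_1)\}$ stays in the open half-line $(\pm(1-\rho_1),\pm 1]$ where $U'$ is smooth, so the identification of the unique null path is unambiguous; goodness of $I^y_{c_0}$ together with the closedness of the hitting event then yields $J_y(c_0)>0$ as you say.
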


\begin{proof}
By the exponential Chebyshev inequality, for every $\gamma>0$, 
\begin{align*}
&  \bb P_x^{\beta, \ell_\beta}( \mc N_{\rho, \beta} > \ell_\beta)   =\bb P_x^{\beta, \ell_\beta} \Big( \sum_{k=0}^{\lfloor \ell_\beta \rfloor} (\sigma_k -\sigma_{k-1}) \le \ell_\beta \Big)  \\ &\le  \bb P_x^{\beta, \ell_\beta} \Big( \sum_{k=0}^{\lfloor \ell_\beta/2 \rfloor -1} (\sigma_{2k+1} -\sigma_{2k}) \le \ell_\beta\Big)      \le \bb E_x^{\beta, \ell_\beta} \exp\Big( \gamma \ell_\beta - \gamma \sum_{k=0}^{\lfloor \ell_\beta/2 \rfloor -1}   (\sigma_{2k+1} -\sigma_{2k}) \Big) \\  &  =  \exp \Big(\gamma \ell_\beta + (\lfloor \ell_\beta/2 \rfloor -1)\log \bb E_x^{\beta, \ell_\beta} \rme^{-\gamma  (\sigma_{1} -\sigma_{0})} \Big),
\end{align*}
where in the last equality we used that the random variables $\sigma_{2k+1}-\sigma_{2k}$, $k =0, \dots, \lfloor \ell_\beta/2 \rfloor -1$, are i.i.d.\ and independent of the initial condition $x$ when $X$ is sampled according to $\bb P_x^{\beta, \ell_\beta}$. More precisely, from the strong Markov property and the symmetry of the potential $W$,   the law of $\sigma_{2k+1}-\sigma_{2k}$ is the same as the law of the hitting time $\tau$ of the point $-1+\rho_2$ under $\bb P_{-1+\rho_1}^{\beta, \ell_\beta}$.

To estimate $\bb E_{-1+\rho_1}^{\beta, \ell_\beta} (\rme^{-\gamma\tau}) $ we use that $f(x) := \bb E_x^{\beta, \ell_\beta} (\rme^{-\gamma  \tau} )$ can be characterized as the unique bounded solution to  
\[
\begin{cases} L_\beta  f = \gamma f  & \text{on }  (-\infty,   -1+\rho_2) \\ f(-1+\rho_2) = 1, \end{cases}
\]
where $L_\beta f =    \frac 1{2\beta}  f'' -  U'_\beta  f'$. Recalling~\eqref{He}, by ground state transformation, $ h= f \psi_\beta  $ satisfies
\[
\begin{cases}
H_\beta h =   (E_{0,\beta}- \gamma ) h &\text{on }  (-\infty,   -1+\rho_2) \\ h(-1+\rho_2) =  \psi_\beta(-1+\rho_2).
\end{cases}
\]
Choosing  $\gamma > \lim_{\beta} E_{0,\beta} = 1$, it follows that $h''\ge 2\beta (\gamma - E_{0,\beta}) > 0$ for any $\beta$ large enough.  Since $h$ is bounded, it is therefore a non decreasing function, whence $h(-1+\rho_1) \leq h(-1+\rho_2) = \psi_\beta(-1+\rho_2) $ for any $\beta$ large enough. We thus obtain $f(-1+\rho_1) \le \psi_\beta(-1+\rho_2)/\psi_\beta(-1+\rho_1)$. By Proposition~\ref{Prop_Semicl} (iii) there exists $C_\rho>0$ such that for any $\beta$ large enough
$ f(-1+\rho_1)   \le  \rme^{-\beta C_\rho} $. The statement follows.   
\end{proof}

We next analyze the transition probabilities of the chain $(Y_k)$ for $\beta\to \infty$. The transitions from $-1+\rho_2$ to $1-\rho_1$ and, symmetrically, $1-\rho_2$ to $-1+\rho_1$ correspond to atypical behavior of the underlying diffusion~\eqref{diff}.

\begin{lemma}
\label{lemmap}
\begin{equation}
\label{asymptp}      
\varlimsup_{\rho_2\to 0} \varlimsup_{\rho_1\to 0} \varlimsup_{\beta\to\infty} \frac 1\beta \log p_{\rho, \beta}  \le  - C_\mathrm{W}. 
\end{equation}
\end{lemma}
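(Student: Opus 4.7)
The plan is to exploit the one-dimensional nature of the diffusion~\eqref{diff} and compute $p_{\rho,\beta}$ explicitly through its scale function, then read off the exponential decay rate from the uniform convergence $U_\beta \to U$ on compact sets (Proposition~\ref{Prop_Semicl}~(iii)). Since~\eqref{diff} has drift $-U_\beta'$ and constant diffusion coefficient $\beta^{-1}$, a scale function for the generator is $s_\beta(x) = \int_0^x \rme^{2\beta U_\beta(y)}\,\rmd y$. The classical one-dimensional hitting-probability formula (applicable because $U_\beta$ is bounded on any compact interval, so the diffusion is regular and non-explosive there) then yields
\[
p_{\rho,\beta} = \frac{\int_{-1+\rho_1}^{-1+\rho_2} \rme^{2\beta U_\beta(y)}\,\rmd y}{\int_{-1+\rho_1}^{1-\rho_1} \rme^{2\beta U_\beta(y)}\,\rmd y}.
\]

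For the numerator, recall from~\eqref{augdis} that $U$ is continuous and monotone non-decreasing on $[-1,0]$ with $U(-1)=0$; combining this monotonicity with the uniform convergence $U_\beta \to U$ on $[-1+\rho_1,-1+\rho_2]$, for any $\eps>0$ and $\beta$ large enough
\[
\int_{-1+\rho_1}^{-1+\rho_2} \rme^{2\beta U_\beta(y)}\,\rmd y \le (\rho_2-\rho_1)\, \rme^{2\beta(U(-1+\rho_2)+\eps)}.
\]
For the denominator, the key fact is that $U$ attains its global maximum $C_\mathrm{W}/2 = U(0)$ at the saddle. Given $\eps>0$, by continuity of $U$ one picks $\delta=\delta(\eps)\in(0,1-\rho_1)$ with $U(y)\ge C_\mathrm{W}/2-\eps$ on $[-\delta,\delta]$; the uniform convergence then upgrades this to $U_\beta(y)\ge C_\mathrm{W}/2-2\eps$ for $\beta$ large, and restricting the integration to $[-\delta,\delta]$ gives
\[
\int_{-1+\rho_1}^{1-\rho_1} \rme^{2\beta U_\beta(y)}\,\rmd y \ge 2\delta\, \rme^{\beta(C_\mathrm{W}-4\eps)}.
\]

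Dividing the two bounds, the polynomial prefactors are annihilated by $\beta^{-1}\log$ in the $\beta\to\infty$ limit, leaving
\[
\varlimsup_{\beta\to\infty}\frac 1\beta \log p_{\rho,\beta} \le 2U(-1+\rho_2) - C_\mathrm{W} + 6\eps.
\]
Since this bound does not depend on $\rho_1$, the $\varlimsup_{\rho_1\to 0}$ is automatic; sending $\eps\to 0$ and then $\rho_2\to 0$, continuity of $U$ at $-1$ gives $2U(-1+\rho_2)\to 0$ and the claim $-C_\mathrm{W}$ follows. There is no real obstacle here: once the scale-function representation is in place, everything reduces to elementary Laplace-type estimates governed by Proposition~\ref{Prop_Semicl}~(iii). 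The main conceptual point is the recognition that the escape cost from one well to the other is controlled by twice the value of the Agmon distance $U$ at the saddle, which by definition is exactly $C_\mathrm{W}$.
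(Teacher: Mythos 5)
Your argument is correct, and it takes a genuinely different route from the paper. The paper disposes of this lemma in one line: it notes that the quasi-potential associated with the rate function $I_T^x$ of Proposition~\ref{LDPdiff} is $2U$, and then invokes the uniform large deviation principle together with the escape-probability machinery of \cite[\S~4.2]{FW}, using $C_\mathrm{W}=2U(0)$. You instead exploit the one-dimensionality of~\eqref{diff} head-on: the scale density $s_\beta'(y)=\rme^{2\beta U_\beta(y)}=\psi_{0,\beta}(y)^{-2}$ gives the exact ratio formula for $p_{\rho,\beta}$, and the rest is a Laplace estimate driven by Proposition~\ref{Prop_Semicl}~(iii) and the monotonicity of the Agmon distance $U$ on $[-1,0]$. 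Your route is more elementary and self-contained (it bypasses the pathwise LDP entirely and the somewhat delicate Freidlin--Wentzell exit arguments), and it actually delivers more than the lemma asks for: for fixed $\rho$ it identifies the exact exponential rate $2U(-1+\rho_2)-C_\mathrm{W}$, with a matching lower bound available by the same two-sided Laplace estimates. What it buys less of is robustness: the paper's quasi-potential argument is the one that generalizes beyond dimension one, and the LDP of Proposition~\ref{LDPdiff} is needed elsewhere in Section~\ref{sec:5} anyway (e.g.\ in~\eqref{stimatau} and~\eqref{infs++}), so the authors get Lemma~\ref{lemmap} essentially for free from infrastructure they must build regardless. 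Two minor points worth making explicit in your write-up: the exit formula is legitimate because $U_\beta'=-\beta^{-1}\psi_{0,\beta}'/\psi_{0,\beta}$ is smooth and bounded on the compact interval $[-1+\rho_1,1-\rho_1]$ (so the exit time is a.s.\ finite and the scale-function identity applies), and the identification $\sup_{[-1+\rho_1,-1+\rho_2]}U=U(-1+\rho_2)$ uses that on $[-1,0]$ the minimum in~\eqref{augdis} is attained by the first integral, so that $U$ there equals $\int_{-1}^x\sqrt{2W}$ and is non-decreasing.
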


\begin{proof}
As follows by direct computation, the quasi-potential associated to the rate function in~\eqref{ratefuncI} is $2U$. In view of Proposition~\ref{LDPdiff} and since $C_\mathrm{W} = 2 U(0)$, the proof is achieved by the arguments in \cite[\S~4.2]{FW}.
\end{proof}

\begin{lemma}
\label{lemmanuovo}
Let $\mc N^\pm_{\rho, \beta} =  | \{  1\le k \le  \mc N_{\rho, \beta}  \colon  Y_{k-1} = \pm 1 \mp \rho_2  , \,  Y_{k} = \mp 1 \pm \rho_1 \} |$. Then, for any $ k\ge 0$, 
\[
\varlimsup_{\rho_2\to 0}  \varlimsup_{\rho_1\to 0}  \varlimsup_{\beta\to\infty}  \sup_{x\in \bb R} \frac 1\beta \log \bb P_x^{\beta, \ell_\beta} (\mc N^{-}_{\rho, \beta} + \mc N^+_{\rho, \beta} \ge k ) \le -k ( C_\mathrm{W} - \alpha).
\]
\end{lemma}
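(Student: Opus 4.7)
The plan is to exploit the discrete-time Markov chain structure of $(Y_k)$ set up above~\eqref{sigmak}. The first step will be a truncation: since Lemma~\ref{superexp} already shows that $\{\mc N_{\rho,\beta}>\ell_\beta\}$ has super-exponentially small probability uniformly in the starting point, it is enough to control $\bb P_x^{\beta,\ell_\beta}(\mc N^-_{\rho,\beta}+\mc N^+_{\rho,\beta}\ge k,\,\mc N_{\rho,\beta}\le\ell_\beta)$.

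On this truncated event, the strategy is to dominate the total crossing count by a binomial random variable. The key structural observation is that, reading off the transition matrix displayed above, a crossing (a transition contributing to $\mc N^+$ or $\mc N^-$) can occur only at a step $k$ with $Y_{k-1}\in\{-1+\rho_2,1-\rho_2\}$; at each such ``random'' step the chain lands in the opposite well with probability exactly $p_{\rho,\beta}$, thanks to the strong Markov property together with the $x\mapsto -x$ symmetry of $W$ that makes the transition probabilities from $-1+\rho_2$ and from $1-\rho_2$ coincide. Enlarging the probability space to carry an independent i.i.d.\ Bernoulli$(p_{\rho,\beta})$ sequence $(\xi_i)_{i\ge 1}$ recording the outcome of the $i$-th random step, one can then write $\mc N^-_{\rho,\beta}+\mc N^+_{\rho,\beta}=\sum_{i=1}^N \xi_i$, where $N\le \mc N_{\rho,\beta}$ is the (random) number of random steps. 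On $\{\mc N_{\rho,\beta}\le\ell_\beta\}$ this forces $N\le\lfloor\ell_\beta\rfloor$, hence
\[
\mc N^-_{\rho,\beta}+\mc N^+_{\rho,\beta}\le \sum_{i=1}^{\lfloor\ell_\beta\rfloor}\xi_i.
\]

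What remains is the binomial tail bound
\[
\bb P\Big(\sum_{i=1}^{\lfloor\ell_\beta\rfloor}\xi_i\ge k\Big)\le \sum_{j\ge k}\binom{\lfloor\ell_\beta\rfloor}{j}p_{\rho,\beta}^j \le \frac{(\ell_\beta\, p_{\rho,\beta})^k}{k!}\exp\{\ell_\beta p_{\rho,\beta}\},
\]
in which the exponential factor is sub-exponential in $\beta$ because $\ell_\beta p_{\rho,\beta}\to 0$ once $\rho_1,\rho_2$ are taken small enough (combining \eqref{ipell}, which gives $\beta^{-1}\log\ell_\beta\to\alpha<C_\mathrm{W}$, with Lemma~\ref{lemmap}). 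Dividing the logarithm by $\beta$ and successively passing to $\varlimsup_\beta$, $\varlimsup_{\rho_1\to 0}$, $\varlimsup_{\rho_2\to 0}$, the right hand side collapses to $k(\alpha-C_\mathrm{W})$ by~\eqref{ipell} and Lemma~\ref{lemmap}, which is the announced bound. The only slightly delicate step is the coupling construction in the middle paragraph: one must check carefully, via the strong Markov property applied successively at the stopping times $\sigma_{2k}$, that the indicators of crossings at distinct random transitions really are i.i.d.\ Bernoulli$(p_{\rho,\beta})$; this crucially uses the $W$-symmetry, which is already encoded in the very definition of $p_{\rho,\beta}$, so in fact no serious obstacle arises and the argument is essentially a bookkeeping exercise on top of the previous lemmas.
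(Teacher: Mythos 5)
Your proof is correct and follows essentially the same route as the paper's: truncate on $\{\mc N_{\rho,\beta}\le\ell_\beta\}$ via Lemma~\ref{superexp}, use the graphical construction of the chain $(Y_k)$ to dominate $\mc N^-_{\rho,\beta}+\mc N^+_{\rho,\beta}$ by a binomial with success probability $p_{\rho,\beta}$, bound its tail, and conclude from \eqref{ipell} and Lemma~\ref{lemmap}. The only differences are cosmetic: the paper dominates by a Binomial$(2\lfloor\ell_\beta\rfloor,p)$ and uses the exponential Chebyshev inequality where you use the elementary bound $\sum_{j\ge k}\binom{n}{j}p^j\le (np)^k\rme^{np}/k!$, which yields the same log-asymptotics.
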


\begin{proof}
By  the graphical construction of the discrete time Markov chain $(Y_k)$, on the event $\{\mc N_{\rho, \beta} \le \ell_\beta\}$ the random variable $\mc N^{-}_{\rho, \beta} + \mc N^+_{\rho, \beta} $ is stochastically dominated by a random variable $Z$ having binomial distribution with parameters $(2\lfloor\ell_\beta\rfloor, p)$. By the exponential Chebyshev inequality,
\[   
\bb P(Z \ge k) \le \exp\Big\{ - 2\lfloor\ell_\beta\rfloor I_p \Big( \frac {k}{2\lfloor\ell_\beta\rfloor} \Big) \Big\}, \quad  I_p(z) =   z \log \frac zp + (1-z) \log \frac{1-z}{1-p}.
\]
By elementary computations we conclude by Lemmata~\ref{superexp} and~\ref{lemmap}.  
\end{proof}

In order to show exponential tightness, recalling the Fr\'echet-Kolmogorov compactness criterion, for $X\in L^p((0,1))$ we introduce 
\begin{equation}
\label{contmod}    
\omega_h(X) = \int_0^{1-h}\!\rmd s\, |X_{s+h} - X_{s}|^p + \int_0^h\!\rmd s\, |X_{s}|^p + \int_{1-h}^1\!\rmd s\,  |X_{s}|^p , \quad h\in (0,1). 
\end{equation}

\begin{lemma}
\label{PropTightness}
For each $L, \zeta>0$,
\begin{equation}
\label{contmod2}
\lim_{h \downarrow 0}  \varlimsup_{\beta\to \infty}   \sup_{|x|\le L}  \frac 1\beta \log  (\bb P_x^{\beta,\ell_\beta} \circ \imath^{-1}_{\eps_\beta} )  ( \omega_h > \zeta)  = - \infty. 
\end{equation}
\end{lemma}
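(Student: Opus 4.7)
The plan is to reduce the failure event $\{\omega_h(\imath_\beta X) > \zeta\}$ to the event that $X$ performs too many successful transitions between the pure phases, whose probability is super-exponentially small in $\beta$ by Lemma~\ref{lemmanuovo}. First I would restrict to the super-exponentially likely events $\mc R_R := \{\sup_{t\in[0,\ell_\beta]} |X_t| \le R\}$ and $\mc W_\delta := \{\int_0^{\ell_\beta} W(X_t)\,\rmd t \le \delta \ell_\beta\}$, whose complements are, by Lemma~\ref{p:1} and Lemma~\ref{lemma_Pgrande}~(i), bounded uniformly for $|x|\le L$ by $\exp\{-\beta C(R)\}$ with $C(R)\to\infty$ as $R\to\infty$, respectively by a super-exponentially small probability.

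Given $0<\rho_1<\rho_2<1$ to be chosen, let $T^\pm := \{t\in [0,\ell_\beta]\colon |X_t\mp 1|\le \rho_1\}$ and $T^{\mathrm{tr}} := [0,\ell_\beta]\setminus (T^+\cup T^-)$. Since $W(x) \ge c_R\rho_1^2$ whenever $|x|\le R$ and $\min(|x-1|,|x+1|)\ge \rho_1$, on $\mc R_R\cap \mc W_\delta$ one has $|T^{\mathrm{tr}}| \le \delta \ell_\beta/(c_R\rho_1^2)$. After the change of variable $t=\ell_\beta s$ in~\eqref{contmod}, I split the integrand $|X(\ell_\beta(s+h))-X(\ell_\beta s)|^p$ into three cases according to the localization of $t$ and $t+\ell_\beta h$: (i) both in the same $T^\pm$, bounding the integrand by $(2\rho_1)^p$; (ii) one in $T^+$ and the other in $T^-$, which by continuity forces a full passage between the pure phases during $[t,t+\ell_\beta h]$; (iii) at least one endpoint in $T^{\mathrm{tr}}$, contributing over a $t$-set of measure $\le 2|T^{\mathrm{tr}}|$. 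Each case (ii) interval corresponds to an entry of $X$ into the $\rho_1$-neighborhood of the opposite pure phase, hence to a successful transition in the sense of~\eqref{sigmak}, so the $t$-measure of case (ii) is at most $(\mc N^+_{\rho,\beta}+\mc N^-_{\rho,\beta})\,\ell_\beta h$. Combining with the trivial bound $\le 2R^p h$ for the two boundary integrals in~\eqref{contmod}, on $\mc R_R\cap \mc W_\delta$,
\[
\omega_h(\imath_\beta X)\,\le\,(2\rho_1)^p + \frac{2(2R)^p\delta}{c_R\rho_1^2} + (2R)^p(\mc N^+_{\rho,\beta}+\mc N^-_{\rho,\beta})\,h + 2R^p h.
\]

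Given $\zeta>0$, I would fix $\rho_1<\rho_2$ small so that $(2\rho_1)^p\le \zeta/4$ and Lemma~\ref{lemmanuovo} is effective with a rate arbitrarily close to $C_\mathrm{W}-\alpha$; then $R$ large and $\delta$ small so that the second term is $\le \zeta/4$; finally, $h$ small so that $2R^p h\le \zeta/4$ too. With these choices, $\{\omega_h>\zeta\}\cap \mc R_R\cap \mc W_\delta \subset \{\mc N^+_{\rho,\beta}+\mc N^-_{\rho,\beta} \ge K(h)\}$ with $K(h):=\lceil \zeta/(4h(2R)^p)\rceil\to \infty$ as $h\downarrow 0$. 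A union bound combined with Lemmata~\ref{p:1}, \ref{lemma_Pgrande}~(i), and \ref{lemmanuovo} then yields $\varlimsup_\beta\beta^{-1}\log\sup_{|x|\le L}\bb P_x^{\beta,\ell_\beta}(\omega_h>\zeta) \le -K(h)(C_\mathrm{W}-\alpha-\eta)$ for any $\eta>0$, and letting $h\downarrow 0$ concludes the proof. The main technical point will be the alignment of the iterated limsup $\varlimsup_{\rho_2\to 0}\varlimsup_{\rho_1\to 0}\varlimsup_\beta$ of Lemma~\ref{lemmanuovo} with our constraint that $\rho_1$ be small enough depending only on $\zeta$; this is handled by fixing $\rho_1,\rho_2$ once in terms of $\zeta$ and $\eta$ before taking $\beta\to\infty$, exploiting that Lemma~\ref{lemmanuovo} is uniform in $x$ for each fixed $\rho$.
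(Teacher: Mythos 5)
Your proof is correct and follows essentially the same strategy as the paper's: restrict to bounded paths with small time-averaged $W$ (Lemmata~\ref{p:1} and~\ref{lemma_Pgrande}~(i)), partition the $s$-integral according to whether $X_s$ and $X_{s+h}$ lie near the same well, near opposite wells, or in the transition region, bound the opposite-well contribution by $h$ times the transition count, and conclude via Lemma~\ref{lemmanuovo}. The only differences are cosmetic (symmetric $\rho_1$-neighborhoods instead of the paper's $\rho_2/\rho_1$ asymmetry, and a Chebyshev bound on $|T^{\mathrm{tr}}|$ instead of the pointwise bounds $|x\mp1|^2\le c_\rho W(x)$), and you correctly flag and resolve the order-of-limits issue in invoking Lemma~\ref{lemmanuovo} for fixed $\rho$.
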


\begin{proof}
The last two terms in the right-hand side of~\eqref{contmod} can be  controlled by Lemma~\ref{p:1}. By the same lemma, it suffices to control the first term in the right-hand side of~\eqref{contmod} for $p=2$. Given $\delta>0$, define $I_\delta^\pm := \{x\colon |x\mp 1|<\delta\}$ and introduce the sets
\begin{align*}
& D_{0,\rho}  =   \{s \in (0, 1-h) \colon  ||X_s| -1|  \ge \rho_2 \} \cap \{s \in (0, 1-h)  \colon  ||X_{s+h}| -1|  \ge\rho_1 \} ,\\ &  D_{\pm,\rho}    = \{s \in (0, 1-h)  \colon   X_s\in  I_{\rho_2}^\pm \} \cap \{s   \in (0, 1-h) \colon ||X_{s+h}| -1|  \ge \rho_1 \} ,
\\ & \hat D_{\pm,\rho}    = \{s \in (0, 1-h)  \colon X_{s+h} \in I_{\rho_1}^\pm\} \cap \{s \in (0, 1-h) \colon  ||X_{s}| -1| \ge \rho_2 \},
\\ & A_{\pm, \rho} = \{  s \in (0, 1-h) \colon X_s \in I_{\rho_2}^\pm,  X_{s+h}\in  I_{\rho_1}^\pm  \}, 
\\ &   C_{\pm, \rho}  = \{  s  \in (0, 1-h)  \colon X_s \in I_{\rho_2}^\pm , X_{s+h} \in I_{\rho_1}^\mp \}, 
\end{align*}
that form a partition of $(0,1-h)$. Letting
\[
c_{0,\rho} = \sup \Big\{\frac{|x|^2}{W(x)}\colon ||x|-1| \ge \rho_1 \Big\}, \quad c_{\pm,\rho} =  \sup \Big\{ \frac{|x- (\pm 1)|^2}{W(x)}\colon ||x|-1| \ge \rho_1 \Big\},     
\]
we have
\[
\int_{D_{0,\rho}}\!\rmd s\, |X_{s+h} - X_{s}|^2  \le   4 c_{0,\rho} \int_0^1\! \rmd s\, W(X_s),          
\]
while, for $D_\rho = D_{\pm,\rho} , \hat D_{\pm,\rho}$,
\[
\int_{D_{\rho}}\!\rmd s\, |X_{s+h} - X_{s}|^2 \le 2 \left(\rho_2^2 + c_{\pm,\rho} \int_0^1\!\rmd s\, W(X_s) \right).  
\]
Furthermore,
\[
\int_{A_{\pm,\rho}}\!\rmd s\,  |X_{s+h} - X_{s}|^2 \le (2\rho_2)^2.      
\]
Finally, recalling the stopping times $\sigma_k$ introduced in~\eqref{sigmak} and the $\mc N^\pm_{\rho,\beta}$'s defined in Lemma~\ref{lemmanuovo}, let $S^\pm$ be the ordered collection of $\sigma_{2k+1}$ such that $Y_{2k+1} = \pm 1 \mp \rho_2$ and $Y_{2k+2} = \mp 1 \pm \rho_1$. Set also $S^\pm =\{  \ell_\beta s_1^\pm , \dots,  \ell_\beta  s_{\mc N^\pm_{ \beta, \rho}}^\pm \}$.  By construction, 
\[
C_{ \pm,\rho} \subset \bigcup_{i=1}^{\mc N^\pm_{\rho, \beta}} [s_i^{\pm} -h , s_i^{\pm}],
\] 
so that
\[
\int_{C_{ \pm,\rho}}\!\rmd s\,  |X_{s+h} - X_{s}|^2 \le [2(1+ \rho_2)]^2  \mc N^\pm_{\rho, \beta}\, h.          
\]
Gathering the previous estimates and using Lemmata~\ref{lemma_Pgrande} (i) and~\ref{lemmanuovo} the statement
follows.
\end{proof}

\begin{proof}[Proof of Theorem~\ref{mainthmldp}: exponential
tightness] 
We show that the family of probability measures $\{\mu_{\beta}\}$ on $L^p((0,1))$ is exponentially tight, i.e., there exists a sequence of compacts $\mc K_j\subset L^p((0,1))$ such that
\[
\lim_{j\to\infty} \varlimsup_{\beta\to \infty}  \frac 1\beta
\log  \mu_\beta (\mc K_j^\mathrm{c}) =- \infty .     
\]
Recalling~\eqref{contmod} and the Fr\'echet-Kolmogorov compactness criterion, by a straightforward inclusion of events, see, e.g., \cite[\S~8]{Billingsley}, it suffices to show that for each $\zeta>0$
\[
\lim_{h \downarrow 0}  \varlimsup_{\beta} \frac 1\beta \log  \mu_\beta (\omega_h > \zeta)  = - \infty.
\]
This follows directly from Lemmata~\ref{t:5.1} and~\ref{PropTightness}.
\end{proof}

\subsection*{Upper bound}

We first prove the following local upper bound. 

\begin{lemma}
\label{t:5.7}
Fix $m\in {\rm BV}\big((0,1);\{-1,1\}\big)$ and sequences $m_k\to m$
in $L^p((0,1))$, $\zeta_k\to 0$. Then
\[
\varlimsup_{k\to \infty} \varlimsup_{\beta\to \infty}  \frac 1\beta
\log \mu_\beta  (\mc O^p_{\zeta_k}(m_k)) \le - (C_\mathrm{W}-\alpha) |S(m)|.     
\]
\end{lemma}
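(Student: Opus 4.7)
The approach is to reduce the event $\mc O^p_{\zeta_k}(m_k)$ to a lower bound on the number of well-to-well transitions of the discrete chain $(Y_k)$ defined in \eqref{sigmak}, apply the exponential bound of Lemma~\ref{lemmanuovo}, and first transfer from $\mu_{\beta,\ell_\beta}$ to the stationary law $\bb P^\beta_{\pi_\beta}$ via Lemma~\ref{lemma_staz0}, so that a naive use of Lemma~\ref{t:5.1} (which would cost an uncontrolled factor $\sup_{|y|\le L} 1/\psi_0(y)\sim \rme^{\beta C_\mathrm W/2}$) is avoided. Set $n=|S(m)|$; the case $n=0$ is trivial since $\mu_\beta$ is a probability measure, so assume $n\ge 1$ with jumps at $0<s_1<\dots<s_n<1$ and values $\pm 1$ on the constancy intervals.

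First I would prove a purely geometric inclusion: for $\rho=(\rho_1,\rho_2)$ with $0<\rho_1<\rho_2<1$ small enough (relative to $\min_i (s_i-s_{i-1})$) and for $k$ large enough (so that $\zeta_k + \|m_k-m\|_{L^p}$ is small), any $X\in C([0,\ell_\beta])$ with $\imath_\beta X\in \mc O^p_{\zeta_k}(m_k)$ satisfies $\mc N^+_{\rho,\beta}(X)+\mc N^-_{\rho,\beta}(X)\ge n$. Indeed, by the triangle inequality $\|\imath_\beta X - m\|_{L^p((0,1))}$ is small; on each constancy interval $(s_{i-1},s_i)$ the $L^p$ distance of $\imath_\beta X$ to the appropriate constant $\pm 1$ is small, so Chebyshev forces $\imath_\beta X$ to visit $I^\pm_{\rho_2}$ on a subinterval of positive Lebesgue measure; continuity of $X$ then forces, between consecutive constancy intervals of opposite sign, a full crossing from $I^\pm_{\rho_2}$ to $I^\mp_{\rho_2}$, which by the definition of $(Y_k)$ in \eqref{sigmak} contributes exactly one unit to $\mc N^\mp_{\rho,\beta}$.

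Second, I would absorb the boundary layers and transfer to the stationary process. Choose $T_\beta$ with $\beta^{-1}T_\beta \to\infty$ and $\ell_\beta^{-1} T_\beta \to 0$, e.g.\ $T_\beta = \beta^2$. The same geometric argument, applied after discarding the two boundary layers $[0,T_\beta]$ and $[\ell_\beta-T_\beta,\ell_\beta]$ (whose rescaled contribution to $\|\imath_\beta X - m\|_{L^p}^p$ is at most $C(T_\beta/\ell_\beta) \to 0$), yields $\widetilde{\mc N}^+_{\rho,\beta}+\widetilde{\mc N}^-_{\rho,\beta}\ge n$, where tildes denote the chain built from the restriction $X|_{[T_\beta,\ell_\beta-T_\beta]}$. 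This event lies in the bulk $\sigma$-algebra $\sigma\bigl(\{X_t\}_{t\in[T_\beta,\ell_\beta-T_\beta]}\bigr)$, so Lemma~\ref{lemma_staz0} gives
\[
\mu_{\beta,\ell_\beta}\bigl(\mc O^p_{\zeta_k}(m_k)\bigr) \le \bb P^\beta_{\pi_\beta}\bigl(\widetilde{\mc N}^+_{\rho,\beta}+\widetilde{\mc N}^-_{\rho,\beta}\ge n\bigr) + \rme^{-\beta\omega(\beta)},
\]
for some $\omega(\beta)\to\infty$, where $\mu_{\beta,\ell_\beta}\circ\imath_\beta^{-1} = \mu_\beta$ is understood on the left.

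Third, dominating $\bb P^\beta_{\pi_\beta} \le \sup_x \bb P_x^\beta$ by disintegration, the version of Lemma~\ref{lemmanuovo} on the bulk interval $[T_\beta, \ell_\beta - T_\beta]$ (whose length still satisfies $\beta^{-1}\log(\ell_\beta - 2T_\beta)\to \alpha$ since $\ell_\beta^{-1} T_\beta \to 0$) bounds the right hand side by $\exp\{-\beta n(C_\mathrm W - \alpha)(1+o(1))\}$. Taking $\varlimsup$'s in the order $\beta\to\infty$, then $\rho_1,\rho_2\downarrow 0$, then $k\to\infty$ delivers the local upper bound. The main obstacle is the geometric inclusion: showing that $L^p$ closeness to a step function genuinely forces entries into $\rho_2$-neighborhoods of \emph{both} wells on the relevant constancy intervals, and full crossings in the chain sense, despite the possibility of lengthy excursions near the saddle point $0$. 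This is handled by the elementary Chebyshev argument sketched above combined with pathwise continuity; once settled, the combinatorial input of Lemma~\ref{lemmanuovo} and the analytic transfer of Lemma~\ref{lemma_staz0} close the argument.
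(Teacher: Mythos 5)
Your proposal follows the same route as the paper's proof: reduce via Lemma~\ref{lemma_staz0} (and the concentration of $\pi_\beta$ from Proposition~\ref{Prop_Semicl}~(iv) together with Lemma~\ref{p:1}) to a bound on $\sup_{|x|\le L}\bb P_x^{\beta,\ell_\beta}$, include the $L^p$-neighborhood into the event $\{\mc N^+_{\rho,\beta}+\mc N^-_{\rho,\beta}\ge |S(m)|\}$, and invoke Lemma~\ref{lemmanuovo}. Your observation that Lemma~\ref{t:5.1} cannot be used here because it loses a factor $\sup_y\psi_0(y)^{-1}$ is correct and is precisely why the paper routes through Lemma~\ref{lemma_staz0}.

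One step, as written, would fail: you claim that $L^p$-closeness forces visits to $I^{\pm}_{\rho_2}$ and that a ``full crossing from $I^{\pm}_{\rho_2}$ to $I^{\mp}_{\rho_2}$'' contributes a unit to $\mc N^{\mp}_{\rho,\beta}$. But the chain $(Y_k)$ of \eqref{sigmak} is triggered only by hits of the \emph{inner} levels $\pm1\mp\rho_1$ (the even-indexed stopping times, starting with $\sigma_0$): a path oscillating between $1-\rho_2+\epsilon$ and $-1+\rho_2-\epsilon$ enters both $\rho_2$-neighborhoods arbitrarily often yet never triggers any $\sigma_k$, so $\mc N^{\pm}_{\rho,\beta}=0$. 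The repair is immediate — run the Chebyshev argument with $\rho_1$ in place of $\rho_2$, so that $X$ is forced into $I^{\pm}_{\rho_1}$ on each constancy interval; then each sign change does force a recorded transition from $\pm1\mp\rho_2$ to $\mp1\pm\rho_1$. Relatedly, your stated order of limits ($\beta$, then $\rho$, then $k$) is inconsistent with the fact that the inclusion holds only for $k\ge k_0(\rho)$ at fixed $\rho$; the correct bookkeeping is to fix $\rho$, take $\varlimsup_k\varlimsup_\beta$ (which is then bounded by the $\rho$-dependent rate from Lemma~\ref{lemmanuovo}), and only afterwards optimize over $\rho$ — or, as the paper does, to choose $k$-dependent sequences $\rho_{1,k}<\rho_{2,k}$ compatible with both constraints. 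With these two adjustments the argument is complete.
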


\begin{proof}
Recalling assumption~\eqref{ipell}, by  Lemma~\ref{lemma_staz0} it suffices to show the statement with $\mu_\beta$ replaced by $\bb P^{\beta,\ell_\beta}_{\pi_\beta}\circ \imath_\beta^{-1}$ and $\mc O^p_{\zeta_k}(m_k)$ replaced by 
\[
\Big\{ X \in L^p((0,1)) \colon  \int_{T_\beta/\ell_\beta}^{1- T_\beta/\ell_\beta} \! \rmd s \,   |X_s - m_k(s)|^p < (2\zeta_k)^p   \Big\},
\]
where $T_\beta$ satisfies $\lim_{\beta} \beta^{-1} T_\beta = \infty$ and $\lim_{\beta} \ell_\beta^{-1} T_\beta = 0$. By Proposition~\ref{Prop_Semicl} (iv) and  Lemma~\ref{p:1}, the previous statement is proven once we show that for each sequence $\zeta_k \to 0$ and $L>0$
\begin{equation}
\label{5.71}
\varlimsup_{k\to \infty} \varlimsup_{\beta\to \infty} \sup_{|x|\le L}\frac 1\beta \log\big( \bb P^{\beta,\ell_\beta}_{x}\circ \imath_\beta^{-1}\big) \big(\mc O^p_{\zeta_k}(m_k)\big) \le - (C_\mathrm{W}-\alpha) |S(m)|.
\end{equation}
To prove it, we first note that there exists a sequence $\zeta'_k \to 0$ such that the inclusion $\mc O_{\zeta_k}(m_k) \subset \mc O_{\zeta'_k}(m) $ holds for every $k$.  Moreover, recalling the definition of $\mc N^\pm_{\rho,\beta}$ in Lemma~\ref{lemmanuovo}, there exist sequences $0< \rho_{1,k}< \rho_{2,k}$ with $\rho_{2,k} \to 0$ such that
\[
\mc O_{\zeta'_k}(m) \subset \imath_{\beta}^{-1} \Big (\mc N^+_{\rho_k,\beta}  + \mc N^-_{\rho_k,\beta} \ge |S(m)| \Big), \quad k\in \bb N, 
\]
where $\rho_k = (\rho_{1,k}, \rho_{2,k})$. Therefore~\eqref{5.71} follows from Lemma~\ref{lemmanuovo}.
\end{proof}

\begin{proof}[Proof of Theorem~\ref{mainthmldp}: upper bound on compacts]
The proof is achieved by showing that the local upper bound in Lemma~\ref{t:5.7} implies the upper bound for compacts. The corresponding somewhat technical but general argument is based on a min-max lemma and detailed for completeness.  Let $\mathrm{BV}_{\le n} = \{ m \in \mathrm{BV} \big( (0,1);\{-1,1\} \big) : |S(m)| \le n \} $ so that $\mathrm{BV} \big( (0,1);\{-1,1\} \big) = \bigcup_{n} \, \mathrm{BV}_{\le n} $.  Since $ \mathrm{BV}_{\le n}$  is a compact subset of $L^p((0,1))$, for each $\zeta>0$ there exists $K\in \bb N$ and $m_1, \dots, m_K$ in $\mathrm{BV}_{\le n}$ such that $ \mathrm{BV}_{\le n} \subset \bigcup_i \mc O^p_\zeta (m_i) = : \mc A_{n, \zeta} $.

For $\zeta>0$ and $m \in \mathrm{BV}_{\le n}$ set
\[
j_\zeta(m) = - \varlimsup_\beta \frac 1\beta \log \mu_\beta(\mc O^p_\zeta (m)).  
\]   
Let also the function $J^1_{n, \zeta} \colon \mc A_{n, \zeta} \to [0, \infty)$ be defined by
\[
J^1_{n, \zeta}  (X) = \min_{i : X\in \mc O^p_\zeta(m_i)} j_\zeta(m_i).  
\]
Let $\{\sigma_k\}$ be as defined in~\eqref{sigmak} and $\mc N^\pm_{\rho,\beta}$ as in Lemma~\ref{lemmanuovo}. Introduce the
event $ \mc B_n = \imath_{\eps_\beta}^{-1} \Big (\mc N^+_{\beta, \rho} + \mc N^-_{\beta, \rho} \ge n \Big) $ and set
\[
c_n  =   -  \varlimsup_\beta \frac 1\beta \log \mu_\beta (  \mc B_n ) = 
-  \varlimsup_\beta \frac 1\beta \log \mu_{\beta, \ell_\beta}
(\mc N^+_{\beta, \rho}  + \mc N^-_{\beta, \rho} \ge n ) . 
\]
By the Urysohn lemma there exists a continuous function $J^2_{n, \zeta} \colon L^p((0,1)) \to [0, \infty)$ such that
\[
J^2_{n, \zeta}  (X) = \begin{cases} J^1_{n, \zeta}  (X) & \textrm{ if }  X\in \mc A_{n,\zeta},  \\ c_n   &  \textrm{ if }  X\in \mc A^\mathrm{c}_{n,2\zeta} .
\end{cases}
\]
Set finally $J_{n, \zeta} = J^2_{n, \zeta} \wedge c_n$.

We claim that for each $\zeta, n$ and each open $\mc G\subset L^p((0,1))$
\begin{equation} 
\label{claim1} 
\varlimsup_\beta \frac 1\beta \log \mu_\beta (\mc G) \le - \inf_{X\in \mc G } J_{n, \zeta}(X)
\end{equation}
and that, for each $X \in L^p((0,1))$,
\begin{equation} 
\label{claim2} 
\varliminf_{n}\varliminf_{\zeta\to 0} J_{n, \zeta} (X) \ge I (X) ,
\end{equation}
where $I$ is the rate function defined in~\eqref{Imod}. Since $X\mapsto J_{n, \zeta}(X)$ is continuous, by the min-max lemma in~\cite[App.~2, Lemma~3.2]{KiLa}, this claim implies the upper bound on compacts.

In order to prove~\eqref{claim1}, we introduce the sets
\begin{gather}
\mc W_\delta = \Big\{  X \in L^p((0,1)) \colon  \int_0^1 \rmd s \,W(X_s)) \le \delta \Big\}, \\  \mc X_{R} =  \Big\{ X \in L^p((0,1)) \colon  \sup_{0< t< \ell_\beta}  |X_t|  \le R \Big\},
\end{gather}
and observe that,  by Lemmata~~\ref{p:1},~\ref{lemma_Pgrande} (i), and~\ref{t:5.1}, for each $\delta, R>0$,
\begin{equation}
\label{Wdelta}
\lim_{\beta\to \infty}  \frac 1\beta  \log \mu_\beta (\mc
W_\delta^\mathrm{c}) =  - \infty ,     \quad      \lim_{\beta\to \infty}  \frac 1\beta  \log \mu_\beta (\mc  X_R^\mathrm{c}) =  - \infty    .
\end{equation}
We then write
\[
\mu_\beta (\mc G) \le \mu_\beta (\mc G \cap \mc B_n \cap \mc W_\delta \cap  \mc X_R) + \mu_\beta ( \mc W_\delta^\mathrm{c}) +    \mu_\beta ( \mc X_R^\mathrm{c}) + \mu_\beta ( \mc B_n^\mathrm{c} ).
\]
By~\eqref{Wdelta} and the above definitions of $j_\zeta$ and $c_n$, to complete the proof of~\eqref{claim1} it suffices to show that there exist $\delta= \delta(n, \zeta, R)>0$  such that  $ \mc B_n \cap \mc W_\delta \cap \mc X_R \subset \mc A_{n, \zeta}$. Given  $X\in C([0,1])$, satisfying $\mc N^+_{\rho,\beta}(\imath_{\epsilon_\beta}^{-1} X) + \mc N^+_{\rho,\beta} (\imath_{\epsilon_\beta}^{-1} X ) = k\le n$, we recursively define $(s_0,\alpha_0), \ldots, (s_k,\alpha_k)$ as follows. Let
$s_0 := \inf \{s\in [0,1] \colon |X_s| \ge 1- \rho_1 \} $ and set
$\alpha_0 = +$ if $X_{s_0} \ge 1-\rho_1$, $\alpha_0 = -$ if
$X_{s_0} \le -1+\rho_1$. Define iteratively
$\alpha_{j}= - \alpha_{j-1}$,
$s_j = \inf \{s\in [s_{j-1},1] \colon X_s = 1- \rho_1 \} $ if
$\alpha_{j-1} =+$ and
$s_j = \inf \{s\in [s_{j-1},1] \colon X_s = -1+ \rho_1 \} $ if
$\alpha_{j-1} =-$. Let now $C=C(\rho_1)<+\infty$ be such that
\[
|x+1|^2 \le C W(x)\ \mathrm{ for } \ x\le 1 -\rho_1 \quad \textrm{and} \quad |x-1|^2 \le C W(x)\ \mathrm{ for }\ x\ge -1 +\rho_1 , 
\]
that exists since $W$ has quadratic minima at $x=\pm 1$. The previous bound implies that if  $X \in \mc W_\delta \cap \mc X_R$ with $\delta$ small enough then $X\in \mc O^p_\zeta (m^{\alpha_0,k}_{s_1,\ldots, s_k})$.

To prove~\eqref{claim2} we first observe that, by Lemmata~\ref{lemmanuovo} and~\ref{t:5.1},  $c_n\to \infty$. Moreover Lemma~\ref{t:5.7} implies that $ \varliminf_{\zeta\to 0} j_\zeta (m) = I(m) $ uniformly for $m$ in compacts.  Since $\bigcup_{n} \bigcap_{\zeta} \mc A_{n, \zeta} = {\rm BV}((0,1);\{-1, 1\} )$ the claim follows. 
\end{proof}

\subsection*{Lower bound}

We start by proving the large deviation lower bound for the law of the diffusion~\eqref{diff}. Recall that $m^{\pm, 0} := \pm 1 $ and the definition of $m^{\pm,n}_{s_1, \dots,s_n}$ given in~\eqref{mn}.

\begin{lemma}
\label{LemmaLB}
Fix  $n\ge 0$, and, if $n\ge1$, also $0<s_1<\dots< s_n<1$. For any $\delta \in (0,1)$ 
and any open $L^p$-neighborhood $\mc O$ of  $m^{\pm,n}_{s_1, \dots,s_n}$,
\[
\varliminf_{\beta\to \infty} \inf_{x\in I_\delta^\pm} \frac 1\beta  \log (\bb P_x^{\beta,\ell_\beta} \circ  \imath^{-1}_{\beta} ) (\mc O) \ge - (C_\mathrm{W}-\alpha)n, \]
where $I^\pm_\delta := (\pm1-\delta, \pm 1+\delta)$. 
\end{lemma}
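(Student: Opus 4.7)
I would prove this via the standard Friedlin-Wentzell strategy \cite[\S~4.2]{FW}: construct an explicit sub-event $\mc E \subset \imath_\beta^{-1}(\mc O)$ and decompose $\bb P_x(\mc E)$ via the Markov property into ``bulk'' factors (the diffusion remaining near a pure phase) and ``transition'' factors (switching phase). The bulk factors will tend to $1$, while each transition factor should contribute $e^{-\beta(C_\mathrm{W}-\alpha) + o(\beta)}$; the entropy prefactor $\ell_\beta \sim e^{\beta\alpha}$ arises because the $n$ transition times can be freely placed within macroscopic windows of size $\sim \ell_\beta$.

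Concretely, fix small $\eta, \delta, \epsilon > 0$ and large $R, T > 0$, and WLOG the $+$ sign, so that the target sign on $(s_{k-1}, s_k)$ is $\alpha_k := (-1)^{k+1}$. In original time, set $u_k := \ell_\beta s_k - \eta\ell_\beta$, $v_k := \ell_\beta s_k + \eta\ell_\beta$, and define transition windows $W_k := [u_k, v_k]$ and bulk intervals $B_k := [v_k, u_{k+1}]$ (with $v_0 := 0$ and $u_{n+1} := \ell_\beta$). The sub-event $\mc E$ requires $\sup_t |X_t| \le R$, the endpoint conditions $X_{u_k} \in I_{\delta/2}^{\alpha_k}$ and $X_{v_k} \in I_{\delta/2}^{\alpha_{k+1}}$, and the bulk condition $|\{t \in B_k : X_t \notin I_\delta^{\alpha_{k+1}}\}| \le \epsilon\ell_\beta$. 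A direct $L^p$-estimate, splitting into the transition windows, the good part of the bulk, and the bad part, gives $\mc E \subset \imath_\beta^{-1}(\mc O)$ when the parameters are small enough. By the Markov property applied at $u_1, v_1, \ldots, u_n, v_n$, $\bb P_x(\mc E) \ge \prod_{k=0}^n P_{bulk}^{(k)} \cdot \prod_{k=1}^n P_{trans}^{(k)}$. Each bulk factor tends to $1$: by Lemma~\ref{Propsemigruppo} and Fubini, the expected time the diffusion spends outside $I_\delta^{\alpha_{k+1}}$ on $B_k$ is $o(\ell_\beta)$, so Markov's inequality bounds this time by $\epsilon\ell_\beta$ with probability $\to 1$; the endpoint conditions follow from the same lemma, and $\sup_t |X_t| \le R$ from Lemma~\ref{p:1}.

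The heart of the proof is the transition-factor estimate $\liminf_\beta \frac{1}{\beta}\log P_{trans}^{(k)} \ge -(C_\mathrm{W}-\alpha)$, where $P_{trans}^{(k)} := \inf_{y \in I_{\delta/2}^{\alpha_k}} \bb P_y\big(X_{|W_k|} \in I_{\delta/2}^{\alpha_{k+1}}\big)$. To capture the entropy factor, partition $[0, |W_k|]$ into $N \sim 2\eta\ell_\beta/T$ slots of length $T$ and consider the disjoint events $E_j := \{\tau^* \in [(j-1)T, jT]\} \cap \{X_{|W_k|} \in I_{\delta/2}^{\alpha_{k+1}}\}$, where $\tau^* := \inf\{t \ge 0 : X_t \in I_{\delta/2}^{\alpha_{k+1}}\}$. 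Strong Markov at time $(j-1)T$ decomposes $\bb P_y(E_j)$ into a ``no prior transit'' factor, an in-slot transition factor bounded below by $e^{-\beta(C_\mathrm{W}+\epsilon_T)}$ via Proposition~\ref{LDPdiff} (with $\epsilon_T \to 0$ as $T \to \infty$, since the quasi-potential of the effective drift is $2U(0)=C_\mathrm{W}$), and a post-transition stabilization factor tending to $1$ via the strong Markov property at $\tau^*$ combined with Lemma~\ref{Propsemigruppo}. Summing over $j$ uses disjointness to yield $P_{trans}^{(k)} \gtrsim N \cdot e^{-\beta(C_\mathrm{W}+\epsilon_T)}$; since $\frac{1}{\beta}\log N \to \alpha$, this gives $\liminf_\beta \frac{1}{\beta}\log P_{trans}^{(k)} \ge \alpha - C_\mathrm{W} - \epsilon_T$, and then $\epsilon_T \to 0$.

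The main obstacle is obtaining a uniform lower bound on the ``no prior transit'' probability $\bb P_y(\tau^* > (j-1)T)$ across all $j \le N$: one expects this to tend to $1$ because the diffusion's transition rate out of the well $\alpha_k$ is $\sim 2/\bar\ell_\beta$ and $NT \sim \eta\ell_\beta \ll \bar\ell_\beta$ under~\eqref{ipell}, so transitions are atypical on this timescale. A clean justification proceeds via Theorem~\ref{t:sht} combined with rapid equilibration to the quasi-stationary distribution on the well when $\alpha > 0$ (so that $\ell_\beta \gg \beta^{3/2}$); for $\alpha = 0$ no entropy is needed and a single slot already yields the rate $-nC_\mathrm{W}$ directly from Proposition~\ref{LDPdiff}.
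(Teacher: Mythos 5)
Your architecture is sound and it reaches the same rate by a genuinely different combinatorial route than the paper. The paper's proof also builds a transition event localized on a window of length $O(T)$ around each $\ell_\beta s_i$, but it harvests the entropy $\rme^{\beta\alpha}$ by rigidly time-shifting the \emph{entire} event by $2kT$, $|k|\le \lfloor \ell_\beta\zeta/T\rfloor$ (and, for $n\ge 2$, by $n$-tuples of shifts), and then applying the Bonferroni inequality; since the shifted events $\mc C_k$ are not disjoint, this forces an additional \emph{upper} bound $\bb P(\mc C_h\cap\mc C_k)\lesssim \rme^{-2\beta C_\mathrm{W}}$ on pairwise intersections, obtained from the upper-bound half of Proposition~\ref{LDPdiff}. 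Your decomposition into slots indexed by the location of the \emph{first} hitting time $\tau^*$ of the target well makes the events $E_j$ genuinely disjoint, so the union is an exact sum and no inclusion--exclusion correction (hence no LDP upper bound) is needed; the price is that you must show the ``no prior transit'' probability $\bb P_y(\tau^*>(j-1)T)$ stays close to $1$ uniformly in $j\le N$. Your factorization across the $n$ windows via the Markov property at $u_k,v_k$ is also cleaner than the paper's nested conditioning over $n$-tuples of shifts. The bulk estimates (Lemma~\ref{Propsemigruppo} for the endpoints, a Fubini/Markov argument or Lemma~\ref{lemma_Pgrande} for the occupation time, Lemma~\ref{p:1} for the sup bound) coincide with the paper's.

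The one step you should repair is the justification of the uniform no-prior-transit bound. Theorem~\ref{t:sht} concerns the \emph{stationary} process $\bb P^\beta_{\pi_\beta}$, and the paper offers no comparison between $\bb P^\beta_y$ for $y\in I^\pm_{\delta/2}$ and $\bb P^\beta_{\pi_\beta}$ on the relevant time scale (Lemma~\ref{lemma_staz0} only covers the start from $0$ or from the $\phi^4_1$ marginal), nor any statement about equilibration to a quasi-stationary distribution; as written this step does not go through. The correct and much lighter tool is the one the paper itself uses in its proof of the present lemma, namely~\eqref{stimatau}: choosing $x_0$ with $2U(x_0)>\alpha$, the Freidlin--Wentzell exit-time lower bound derived from Proposition~\ref{LDPdiff} (the argument of \cite[Chap.~4, Theorem 4.2]{FW}) gives $\bb P_y(\tau^{\pm}<\ell_\beta)\to 0$ uniformly for $y\in I^\pm_\delta$, and since $\tau^*\ge\tau^{\pm}$ and $NT\le \ell_\beta$ this yields exactly the uniform bound you need. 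With that substitution your proof is complete.
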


\begin{proof}
For the ease of presentation we detail first the cases $n=0,1$. For the case $n=0$, it is enough to show that  for each $\eta>0$, 
\begin{equation} 
\label{infs+}   
\lim_{\beta \to \infty} \bb P_x^{\beta,\ell_\beta} \Big(\frac{1}{\ell_\beta} \int_0^{\ell_\beta}\!\rmd t\, |X_t \mp 1 |^p  > \eta \Big) =  0  \quad  \text{ uniformly for } x \in I_\delta^\pm.
\end{equation}
Recalling~\eqref{ipell}, pick $x_0\in (0,1-\delta)$ such that  $2U(x_0) > \alpha$ and let  $\tau^{\pm} = \inf \{ t\geq 0\colon X_t  = \pm x_0 \} $ be the hitting time of $\pm x_0$. By  Proposition~\ref{LDPdiff} and the argument in \cite[Chap.~4, Theorem 4.2]{FW}  it follows that 
\begin{equation} 
\label{stimatau}     
\lim_{\beta \to \infty}   \bb P_x^{\beta,\ell_\beta} (\tau^\pm < \ell_\beta)  = 0 \quad  \text{ uniformly for }   x\in I_\delta^\pm.    
\end{equation}
Thus~\eqref{infs+} follows from  Lemma~\ref{lemma_Pgrande} (i).

For the case $n=1$, by symmetry, it is enough to consider $m^{-}_{s_1}$. 
Set $t_1 = \ell_\beta s_1$ and $\widehat X_t =  X_{(0\vee  t) \wedge \ell_\beta}$.
For $T, \eta >0$, recalling that $\theta$ denotes the time shift, we define
\begin{align*}  
\mc A^{-}  &  = \Big\{X \colon  \widehat{X}_{t_1-T}  \in I_\delta^- \text{ and } \frac{1}{\ell_\beta} \int_0^{t_1 -T}\! \rmd t\, | \widehat{X}_t+1|^p < \eta\Big\}, \\ \mc A^{+}   & =   \Big\{X \colon  \widehat{X}_{t_1+T}  \in I_\delta^+ \text{ and }  \frac{1}{\ell_\beta}  \int_{t_1 +T}^{\ell_\beta} \! \rmd t\,  | \widehat{X}_t- 1|^p < \eta\Big\},  \\ \mc A^{0}  &  =        \Big\{ X\colon  \widehat{X}_{t_1 \pm T}  \in I_\delta^\pm \text{ and }  | \widehat{X}_t|  < 1 + \delta  \text{ if } |t-t_1| <T    \Big\}.  
\end{align*}
We define, for $a\in \{0, -, +\}$ and $k\in \bb Z$, $\mc A_k^{a}  = \theta_{2k T} \mc A^a$. It is straightforward to verify that, given $\mc O$ as in the statement, there exist $\eta, \zeta>0$ such that, for $\beta$ large enough and $\bar k = \lfloor \ell_\beta \zeta / T \rfloor$,
\[       
\imath_{\beta}^{-1}( \mc O)  \supset  \bigcup_{k= - \bar k  }^{\bar k }  \mc C_k, \quad    \mc C_k = \mc A_k^{-} \cap  \mc A_k^{0} \cap \mc A_k^{+}.
\] 
By the Bonferroni inequality,
\[
(\bb P_x^{\beta,\ell_\beta} \circ  \imath^{-1}_{\beta} )(\mc O) \ge \bb P_x^{\beta,\ell_\beta} \Big(\bigcup_{k= - \bar k  }^{\bar k }  \mc C_k \Big) \ge  \sum_{k= - \bar k  }^{\bar k } \bb P_x^{\beta,\ell_\beta} (  \mc C_k )   -  \sum_{- \bar k \le h< k \le \bar k  }\bb P_x^{\beta,\ell_\beta} ( \mc C_h \cap \mc C_k  ). 
\]

We claim that  
\begin{equation} 
\label{infs}  
\varliminf_{T \to \infty} \varliminf_{\beta\to \infty} \inf_{|k|  \le \bar k } \inf_{x\in I_\delta^-} \frac 1\beta  \log  \bb P_x^{\beta,\ell_\beta} (  \mc C_k  )   \ge   -  C_\mathrm{W}   ,      
\end{equation}
and 
\begin{equation} 
\label{infs2}  
\varlimsup_{T \to \infty} \varlimsup_{\beta\to \infty} \sup_{- \bar k \le h< k \le \bar k } \sup_{x\in I_\delta^-} \frac 1\beta  \log  \bb P_x^{\beta,\ell_\beta} (  \mc C_h \cap \mc C_k )   \le   - 2  C_\mathrm{W},      
\end{equation}
which, recalling~\eqref{ipell}, readily imply the statement for $n=1$.

In order to prove~\eqref{infs} we first observe that~\eqref{infs+} implies
\[
\lim_{\beta\to \infty} \bb P_x^{\beta,\ell_\beta} (\theta_{t_1+ T} \mc A^+)   = 1 \quad \text{uniformly for } x\in I_{\delta}^+. 
\]
Moreover, as it follows from Lemma~\ref{Propsemigruppo} and again~\eqref{infs+},  
\[
\lim_{\beta\to \infty} \bb P_x^{\beta,\ell_\beta} (\mc A^-_k) = 1 \quad \text{uniformly for } x\in I_{\delta}^- \text{ and }  |k| \le  \bar k. 
\]
Finally, from Proposition~\ref{LDPdiff} and $C_\mathrm{W} = 2U(0)$, we deduce 
\begin{equation}
\label{infs++}
\varliminf_{T \to \infty} \varliminf_{\beta \to \infty}  \inf_{y\in I_\delta^-}    \frac  1\beta \log P_x^{\beta,\ell_\beta}  (\theta_{t_1-T}   \mc A^0  )
\ge  - C_\mathrm{W}. 
\end{equation}
By the Markov property and using the identities 
\[
\theta_{t_1 + (2k+1)T}  \id_{\mc A_k^+} =   \id_{\theta_{t_1+T} \mc A^+}, \qquad \theta_{t_1 + (2k-1)T} \id_{\mc A_k^0}  = \id_{\theta_{t_1-T} \mc A^0},
\]
we have
\[
\begin{split}
\bb P_x^{\beta,\ell_\beta} (\mc C_k)  &  = 
\bb P_x^{\beta,\ell_\beta}  (\mc A_k^ - \cap \mc A_k^0 \cap \mc A_k^+)  =  \bb E_x^{\beta,\ell_\beta} ( \id_{\mc A_k^-}  \id_{\mc A_k^0}   \, \bb  E_{ X_{t_1 + (2k+1)T}}^{\beta,\ell_\beta} (\id_{  \theta_{t_1+T} \mc A^+} )) \\  & \ge \inf_{y\in I_\delta^+} \bb P_y^{\beta,\ell_\beta}(    \theta_{t_1+T} \mc A^+  ) \, \bb  E_x^{\beta,\ell_\beta}   ( \id_{\mc A_k^-}  \id_{\mc A_k^0} ) \\ & = \inf_{y\in I_\delta^+}    \bb P_y^{\beta,\ell_\beta}(  \theta_{t_1+T} \mc A^+) \,  \bb  E_x^{\beta,\ell_\beta} (\id_{\mc A_k^-} \, \bb  E^{\beta,\ell_\beta}_{ X_{t_1 + (2k-1)T}} (   \id_{  \theta_{t_1-T}   \mc A^0})) \\  & \ge \inf_{y\in I_\delta^+}    \bb P_y^{\beta,\ell_\beta}(  \theta_{t_1+T}  \mc A^+) \inf_{z\in I_\delta^-}    \bb P_z^{\beta,\ell_\beta}(   \theta_{t_1-T} \mc A^0) \, \bb P_x^ \beta (A_k^-), 
\end{split}     
\]
and the bound~\eqref{infs} follows. 

To prove~\eqref{infs2}, again by Proposition~\ref{LDPdiff} and $C_\mathrm{W} = 2U(0)$, we deduce 
\begin{equation}
\label{infs+++}
\varlimsup_{T \to \infty} \varlimsup_{\beta \to \infty}  \sup_{y\in
I_\delta^-}    \frac  1\beta \log \bb P_x^{\beta,\ell_\beta}  (\theta_{t_1-T}   \mc A^0  ) \le  - C_\mathrm{W}. 
\end{equation}
By the Markov property, for $h<k$,
\[
\begin{split}
\bb P_x^{\beta,\ell_\beta} (\mc C_h \cap \mc C_k)  &  \le  
\bb P_x^{\beta,\ell_\beta}  (\mc A_h^0\cap  \mc A_k^0 )  =
\bb E_x^{\beta,\ell_\beta} ( \id_{\mc A_h^0}   \, \bb  E_{ X_{t_1 + (2k-1)T}}^{\beta,\ell_\beta} (    \id_{  \theta_{t_1-T} \mc A^0} )) \\  & \le \sup_{y\in I_\delta^-}     \bb P_y^{\beta,\ell_\beta}(     \id_{  \theta_{t_1-T} \mc A^0}  ) \, 
\bb  E_x^{\beta,\ell_\beta}    ( \bb  E_{ X_{t_1 + (2h-1)T}}^{\beta,\ell_\beta} (    \id_{  \theta_{t_1-T} \mc A^0} )) \\
& \le    \sup_{y\in I_\delta^-}   \big(   \bb P_y^{\beta,\ell_\beta}(     \id_{  \theta_{t_1-T} \mc A^0}  )      \big)^2,
\end{split}     
\]
and the bound~\eqref{infs2} follows by~\eqref{infs+++}.

For the general case, by symmetry, it is enough to consider  $m^{-, n}_{s_1, \dots, s_n}$. Set $t_i = \ell_\beta s_i$, $i=1, \dots, n$, and for $\eta, T>0$ introduce the events $\mc A^{-}$ as before and
\begin{align*}     
\mc A^{+}   & = \Big\{ X \colon \widehat{X}_{t_n+T}  \in I_\delta^+ \text{ and }   \frac{1}{\ell_\beta}  \int_{t_n +T}^{\ell_\beta}\! \rmd t\,  | \widehat{X}_t- 1|^p   < \eta   \Big\}, \\ \mc A^{0,i}  &  =        \Big\{ X \colon  \widehat{X}_{t_i \pm T}  \in I_\delta^\pm \text{ and }  | \widehat{X}_t|  < 1 + \delta  \text{ if } |t-t_i| <T  \Big\}. 
\end{align*}
As before, for $a\in \{\{0,i\}, -, +\}$ and $k\in \bb Z$ set $\mc A_k^{a}  = \theta_{2kT} \mc A^a$, and for $k,h\in \bb Z$, we let
\[
\mc B_{k,h}^{i, i+1} = \Big\{X \colon  X_{t^+_{i,k}}, X_{t^-_{i+1,h}}  \in I_\delta^i, \text{ and }  \frac{1}{\ell_\beta} \int_{t^+_{i, k}}^{t^-_{i+1,h}}\! \rmd t\, | X_t- (-1)^{i+1}|^p   < \eta  \Big\},
\]
where $t^\pm_{i, k} =   t_i + (2k\pm1)T$ and $I_\delta^i=I_\delta^+$, respectively $I_\delta^i=I_\delta^-$, if $i$ is odd, respectively even. Again, it is straightforward to verify that, given $\mc O$ as in the statement, there exist $\eta, \zeta>0$ such that,  for $\beta$ large enough and $\bar k = \lfloor  \ell_\beta \zeta/T   \rfloor $,
\[
\imath_{\beta}^{-1}( \mc O)     \supset   \bigcup_{k_1= - \bar k  }^{\bar k } \dots   \bigcup_{k_n= - \bar k  }^{\bar k }  \mc  C_{k_1, \dots, k_n},  
\]
with
\[
\mc   C_{k_1, \dots, k_n} = \mc A_{k_1}^{-} \cap  \mc A_{k_1}^{0,1} \cap    \mc B_{k_1, k_2}^{1,2} \cap  \mc A_{k_2}^{0,2}   \cap \dots \cap \mc A_{k_{n-1}}^{0,n-1} \cap \mc B_{k_{n-1}, k_n}^{n-1,n}   \cap   \mc A_{k_n}^{0,n} \cap \mc A_{k_n}^{+}.
\] 
From~\eqref{infs+}, ~\eqref{infs++} and~\eqref{stimatau}, by using the Markov property as in the case $n=1$, we get
\[ 
\varliminf_{T \to \infty} \varliminf_{\beta\to \infty} \frac 1\beta  \log  \bb P_x^{\beta,\ell_\beta} (   \mc   C_{k_1, \dots, k_n}) \ge - n C_\mathrm{W},
\]
uniformly for  $|k_i| \le \bar k $ and  $x\in I_\delta^-$. Again by the Markov property and by~\eqref{infs+++},
\[
\varlimsup_{T \to \infty} \varlimsup_{\beta\to \infty} \frac 1\beta  \log  \bb P_x^{\beta,\ell_\beta} (\mc C_{h_1, \dots, h_n}  \cap  \mc   C_{k_1, \dots, k_n}) \le - (n +  |\{ i\colon k_i \ne h_i\}|) C_\mathrm{W},
\]
uniformly for  $|h_i|,|k_i| \le \bar k $, $ (h_1, \dots, h_n)\neq (k_1, \dots, k_n) $ and  $x\in I_\delta^-$. By using the Bonferroni inequality as before the statement follows. 
\end{proof}

\begin{proof}[Proof of Theorem~\ref{mainthmldp}: lower bound]
Fix $n\ge 0$  and, if $n\ge1$, also $0<s_1<\dots< s_n<1$.
It is enough to show that for any open $L^p$-neighborhood $\mc O$ of  $m^{\pm,n}_{s_1, \dots,s_n}$,
\[
\varliminf_{\beta\to \infty} \frac 1\beta\log\mu_\beta(\mc O) \ge - (C_\mathrm{W}-\alpha)n.
\]
By symmetry, it is enough to consider $m^{-, n}_{s_1, \dots, s_n}$. By the representation~\eqref{rme} and~\eqref{repwp}, 
\begin{align*} 
\mu_\beta(\mc O)  & =  \int_{\bb R^2}\! \wp_\beta (\rmd x\,\rmd y) (\bb P_{x,y }^{\beta,\ell_\beta} \circ\imath^{-1}_{\beta}) (\mc O) \\ & =  \frac{1}{Z_\beta}  \int_\bb R\! \rmd x \int_\bb R \! \bb P_x^{\beta,\ell_\beta} ( X_{\ell_\beta} \in \rmd y)  \frac{\psi_{0, \beta}(x)}{\psi_{0, \beta}(y)} (\bb P_{x,y}^{\beta,\ell_\beta} \circ  \imath^{-1}_{\beta}) (\mc O)  \\    & \ge \frac{1}{Z_\beta} \inf_{y \in \bb  R}  \frac{1}{\psi_{0, \beta}(y)} \int_\bb R\! \rmd x\, \psi_{0, \beta}(x) (\bb P_x^{\beta,\ell_\beta} \circ \imath^{-1}_{\beta}) (\mc O) \\ & \ge \frac{1}{Z_\beta}  \inf_{y \in \bb  R}  \frac{1}{\psi_{0, \beta}(y)}  \inf_{z\in I_\delta^-} (\bb P_z^{\beta,\ell_\beta} \circ  \imath^{-1}_{\beta}) (\mc O) \int_{I_\delta^-}\! \rmd x \,  \psi_{0, \beta}(x). 
\end{align*} 
By items (iii) and (iv) in Proposition~\ref{Prop_Semicl}, 
\[
\varliminf_{\beta\to\infty} \frac 1\beta \inf_{y \in \bb  R} \log \frac{1}{\psi_{0, \beta}(y)} \ge 0,
\]
and the proof is concluded by Corollary~\ref{t:3.2},~\eqref{stimaZpw}, and Lemma~\ref{LemmaLB}.
\end{proof}

\section{Sharp asymptotics}
\label{sec:6}

The aim of this section is to prove the sharp asymptotics stated in Theorem~\ref{theorem_sharp}. Accordingly, we fix $p\in [1, \infty),n\ge 0$,  and the sequences $(\rho_{k,\beta})_{\beta>0}$, $k=1,2$, meeting  the conditions in the statement.

We choose  a sequence $(T_\beta)_{\beta>0} $ such that
\begin{equation} 
\label{cond_T}
\lim_{\beta\to \infty} \frac{T_\beta}{\beta^{3/2}} = \infty,  \quad   \lim_{\beta\to \infty}  \frac{T_\beta }{\ell_\beta} =  0, \quad    \lim_{\beta\to \infty}  \frac{T_\beta }{\ell_\beta \rho^p_{1, \beta}} = \infty, 
\end{equation}
which is possible in view of assumption~\eqref{ipells}. We recursively define the sequence of stopping times $(\tau_k)$ on $C([0, \ell_\beta])$, by setting $\tau_0 = 0$ and, for $k\in \bb N$,
\[    
\tau_k    =     \inf\{ t\in [T_\beta+\tau_{k-1}, \ell_\beta - 2T_\beta) :    X_{t} = 0 \} \wedge   (\ell_\beta  - 2T_\beta ). 
\]
Further we set  
\[
\mc Z  =  \left|  \{ k \in \bb N :   \tau_k < \ell_\beta - 2T_\beta\}\right|. 
\]
Observe that $\tau_k - \tau_{k-1} \ge T_\beta $ for $k \le \mc Z$. Let also 
\[
\mc N =  \left|  \{ k \in \bb N :   X_{T_\beta + \tau_{k-1}}  X_{T_\beta + \tau_{k}}  <0 \}\right|,  
\]
\[
\mc B_{n,\delta}   =   \bigcap_{k=1}^n  \{   \tau_k -\tau_{k-1} >  \ell_\beta \delta \} \cap \{\tau_n < \ell_\beta (1- \delta)\}.
\]

\begin{proposition}
\label{prop_sharp_2}
As $\beta \to \infty$,
\begin{itemize}
\item[(i)]  $\displaystyle  \mu_{\beta, \ell_\beta}  ( \mc Z \ge n ) \sim   \frac{2^n}{n!} \Big(\frac{\ell_\beta}{\bar \ell_\beta} \Big)^n$.
\item[(ii)]  $\displaystyle \mu_{\beta, \ell_\beta}  ( \mc N \ge n )   \sim   \frac{1}{n!} \Big(\frac{\ell_\beta}{\bar \ell_\beta} \Big)^n$. 
\item[(iii)]  Let $(\delta_\beta)_{\beta>0}$ be a sequence such that  $\lim_{\beta}  \delta_\beta \ell_\beta /T_\beta = 0$. 
Then
\[
\mu_{\beta, \ell_\beta}  ( \mc N = n, \, \mc B_{n, \delta_\beta} )   \sim 
\frac{1}{n!} \Big(\frac{\ell_\beta}{\bar \ell_\beta} \Big)^n.
\]
\end{itemize}
\end{proposition}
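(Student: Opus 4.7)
The strategy is to reduce all three statements to the stationary process $\bb P^\beta_{\pi_\beta}$, then identify the asymptotic distribution of the hitting times $(\tau_k)$ via the strong Markov property and Theorem~\ref{t:sht}, and finally exploit the $X\mapsto -X$ symmetry to analyze the signs in (ii)--(iii). Each of the three events lies in $\sigma(\{X_t\}_{t\in[T_\beta,\ell_\beta-T_\beta]})$: the stopping times $\tau_k$ are bounded above by $\ell_\beta-2T_\beta$ and the evaluation points $T_\beta+\tau_k$ lie in $[T_\beta,\ell_\beta-T_\beta]$. Since $T_\beta/\beta\to\infty$, Lemma~\ref{lemma_staz0} allows us to replace $\mu_{\beta,\ell_\beta}$ by $\bb P^\beta_{\pi_\beta}$ with a super-exponentially small error, negligible against the targeted asymptotics $(\ell_\beta/\bar\ell_\beta)^n$. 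At each $\tau_{k-1}<\ell_\beta-2T_\beta$ one has $X_{\tau_{k-1}}=0$; by the strong Markov property the process continues as $\bb P^\beta_0$, whose marginal on times $\ge T_\beta$ is TV-close to $\bb P^\beta_{\pi_\beta}$ again by Lemma~\ref{lemma_staz0}. Theorem~\ref{t:sht} applied with $T_{0,\beta}=T_\beta$ and $T_{1,\beta}=\ell_\beta$, which are admissible by~\eqref{cond_T} and~\eqref{ipells}, then gives $\bb P^\beta_{\pi_\beta}(\tau<t)\sim 2t/\bar\ell_\beta$ uniformly for $t\in[T_\beta,\ell_\beta]$. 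Iterating this $n$ times and absorbing the deterministic waits $T_\beta\ll\ell_\beta$ into lower-order corrections yields
\[
\bb P^\beta_{\pi_\beta}(\mc Z\ge n)=\bb P^\beta_{\pi_\beta}(\tau_n<\ell_\beta-2T_\beta)\sim\frac{1}{n!}\bigg(\frac{2\ell_\beta}{\bar\ell_\beta}\bigg)^{\!n},
\]
which proves (i).

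For (ii), the crucial symmetry input is that reflecting a single chunk $(X_t)_{t\in[\tau_{k-1},\tau_k]}$ across zero produces an equi-probable trajectory (the process law is invariant under $X\mapsto -X$ and the chunk starts and ends at zero), while flipping precisely the sign $s_k:=\mathrm{sgn}(X_{T_\beta+\tau_{k-1}})$. Applying this argument independently to each chunk, and using the symmetry of $\pi_\beta$ for the initial chunk, we deduce that conditional on $(\tau_j)$ and $\mc Z=m$, the signs $(s_0,\ldots,s_m)$ are i.i.d.\ uniform in $\{-1,+1\}$; the consecutive XOR-indicators are then i.i.d.\ $\mathrm{Ber}(1/2)$, so the interior count $\mc N_{\mathrm{int}}:=|\{1\le k\le\mc Z:s_{k-1}\ne s_k\}|$ has conditional law $\mathrm{Bin}(\mc Z,1/2)$. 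Combining with (i), the leading contribution to $\bb P(\mc N\ge n)$ is $\bb P(\mc Z=n)\cdot 2^{-n}\sim(\ell_\beta/\bar\ell_\beta)^n/n!$, while the contribution from $\mc Z\ge n+1$ is of order $(\ell_\beta/\bar\ell_\beta)^{n+1}$ and the only boundary term ($k=\mc Z+1$, involving $X_{\ell_\beta-T_\beta}$) produces a sign change only when $X$ crosses zero in $[\ell_\beta-2T_\beta,\ell_\beta-T_\beta]$, a conditional event of probability $O(T_\beta/\bar\ell_\beta)=o(\ell_\beta/\bar\ell_\beta)$ by Theorem~\ref{t:sht}. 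Both corrections are subleading, giving (ii).

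Part (iii) follows by intersecting with $\mc B_{n,\delta_\beta}$. The inequalities $\tau_k-\tau_{k-1}>\ell_\beta\delta_\beta$ are automatic because $\tau_k-\tau_{k-1}\ge T_\beta$ and $\ell_\beta\delta_\beta\ll T_\beta$ by hypothesis. The constraint $\tau_n<\ell_\beta(1-\delta_\beta)$ replaces $\ell_\beta$ by $\ell_\beta(1-\delta_\beta)$ in the $n$-fold integral, altering the leading order only by the factor $(1-\delta_\beta)^n\to 1$ (note $\delta_\beta\to 0$ since $\delta_\beta\ell_\beta/T_\beta\to 0$ and $T_\beta/\ell_\beta\to 0$). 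Finally, restricting to $\mc N=n$ rather than $\mc N\ge n$ costs only $\bb P(\mc N\ge n+1)=O((\ell_\beta/\bar\ell_\beta)^{n+1})$, which is subleading.

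The main obstacle is a careful quantitative propagation of the sharp $\sim$-asymptotics (not merely logarithmic) through the $n$-fold iteration of the strong Markov decomposition, uniformly in the preceding hitting times, together with the rigorous verification that the boundary sign-change term in (ii) is indeed of order $o((\ell_\beta/\bar\ell_\beta)^n)$. Both rely on the total-variation bound of Lemma~\ref{lemma_staz0} and on the uniformity in $t$ built into Theorem~\ref{t:sht}.
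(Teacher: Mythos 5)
Your proposal follows essentially the same route as the paper: reduce to $\bb P^\beta_{\pi_\beta}$ via Lemma~\ref{lemma_staz0}, iterate the strong Markov property at the hitting times $\tau_k$ with Theorem~\ref{t:sht} supplying the one-step asymptotics, and use the $X\mapsto -X$ symmetry of the conditional chunks for the sign count in (ii). Your observations that the events lie in the relevant $\sigma$-algebra, that the boundary sign change contributes only $O(T_\beta/\bar\ell_\beta)=o(\ell_\beta/\bar\ell_\beta)$, and that the constraints defining $\mc B_{n,\delta_\beta}$ are automatic on $\{\mc Z\ge n\}$ (since $\tau_k-\tau_{k-1}\ge T_\beta\gg \ell_\beta\delta_\beta$) are all correct and, if anything, more explicit than the paper on the boundary and constraint issues.

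The one place where you wave your hands, and where the real work lives, is precisely the ``$n$-fold iteration.'' Theorem~\ref{t:sht} gives $\bb P^\beta_{\pi_\beta}(\tau<t)\sim 2t/\bar\ell_\beta$ only uniformly for $t\ge T_\beta$; after applying the strong Markov property at $\tau_{k-1}$ the remaining horizon $t-\tau_{k-1}$ can drop below $T_\beta$, where the uniform asymptotics is unavailable, and these near-boundary contributions must be shown negligible. The paper does this by introducing a ladder of intermediate thresholds $T_\beta = T_{0,\beta}\ll T_{1,\beta}\ll\cdots\ll T_{n,\beta}\ll\ell_\beta$, proving inductively the sharp asymptotics for $\bb P^\beta_{\pi_\beta}(\sigma_k<t)$ uniformly on $[T_{k,\beta},\ell_\beta]$, and at each step splitting the convolution integral (over $[T_\beta,t-2T_\beta]$ and its complement) and integrating by parts to isolate the $t^{k+1}/(k+1)!$ leading term while controlling the tails by the preceding level of the ladder. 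This bookkeeping is the content you flag as ``the main obstacle'' but do not supply; in the paper it is the bulk of the proof of (i), and hence of (ii)--(iii) which reduce to it. So the strategy is correct and matches the paper, but the argument is incomplete at its technical core.
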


Postponing the proof of this proposition, we first conclude the proof of Theorem~\ref{theorem_sharp}. To  this  end, we introduce the events
\[
\mc W_{\gamma}  = \Big\{ X: \frac{1}{\ell_\beta} \int_0^{\ell_\beta} \! \rmd t \, W(X_t) \le \frac {1+\gamma}{\beta} \Big\},\qquad \mc X_{R} =  \Big\{ X :  \sup_{0< t< \ell_\beta}  |X_t|  \le R \Big\}.
\]

\begin{lemma} 
\label{lemma_inclusione}
Fix $\gamma, R>0$, $z \in \bb N$ and $i=1,2$. Then, eventually as $\beta\to \infty$, 
\[
\left[ \imath_\beta \mc O_{\rho_{i,\beta}} (\mc M_{n-1}) \right]^{\rm c}  \cap \mc W_\gamma \cap \mc X_R \cap \{\mc Z\le z\}    \subset   \{ \mc  N  \ge n \}. 
\]
\end{lemma}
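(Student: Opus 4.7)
The plan is to prove the contrapositive: assuming $X\in\mc W_\gamma\cap\mc X_R\cap\{\mc Z\le z\}$ and $\mc N(X)\le n-1$, I construct a profile $m\in\mc M_{n-1}$ with $\|\imath_\beta X - m\|_{L^p((0,1))}<\rho_{i,\beta}$. First, read off the signs $\sigma_k:=\operatorname{sign}(X_{\tau_k+T_\beta})$ for $k=0,1,\dots,\mc Z+1$; by definition of $\mc N$ at most $n-1$ indices $k_1<\cdots<k_{\mc N}$ satisfy $\sigma_{k-1}\ne\sigma_k$. Define the piecewise constant $M\colon[0,\ell_\beta]\to\{\pm1\}$ with jumps exactly at $\tau_{k_1},\dots,\tau_{k_{\mc N}}$, taking value $\sigma_{k_j}$ on $[\tau_{k_j},\tau_{k_{j+1}})$. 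If $\mc N<n-1$, pad with $n-1-\mc N$ auxiliary jumps clustered in a sub-interval of rescaled length $\ll\rho_{1,\beta}^p$, so that $m:=\imath_\beta M\in\mc M_{n-1}$ exactly, contributing only a negligible $L^p$ error.

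For the $L^p$ estimate I partition $[0,\ell_\beta]$ into \emph{stable} intervals $G_k:=[\tau_k+T_\beta,\tau_{k+1}]$ and \emph{settling} intervals $B_k:=[\tau_k,\tau_k+T_\beta]$, $k=0,\dots,\mc Z$. By definition of $\tau_{k+1}$ as the next zero of $X$ after $\tau_k+T_\beta$, on $G_k$ the sign of $X$ is constantly $\sigma_k$, which by construction equals $M(t)$; hence $|X_t-M(t)|=\bigl||X_t|-1\bigr|$ there. The pointwise inequality $W(x)\ge\tfrac12(|x|-1)^2$ (from $W(x)=\tfrac12(x-1)^2(x+1)^2$ and the observation that one of $(x\pm1)^2$ is at least $1$), combined with $X\in\mc W_\gamma$, gives $\int_0^{\ell_\beta}(|X_t|-1)^2\,dt\le 2(1+\gamma)\ell_\beta/\beta$; H\"older interpolation against the $L^\infty$ bound $|X_t|\le R$ yields
\[
\int_0^{\ell_\beta}\bigl||X_t|-1\bigr|^p\,dt\le C_{p,R,\gamma}\,\ell_\beta\,\beta^{-(p/2\wedge 1)}.
\]
On the settling intervals the crude bound $|X_t-M(t)|\le R+1$ from $\mc X_R$ together with $\sum_k|B_k|\le(z+1)T_\beta$ produces a contribution of at most $(R+1)^p(z+1)T_\beta$.

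Dividing by $\ell_\beta$,
\[
\|\imath_\beta X-m\|_{L^p((0,1))}^p\le C\,\beta^{-(p/2\wedge 1)}+(R+1)^p(z+1)\,\frac{T_\beta}{\ell_\beta}+o(\rho_{1,\beta}^p).
\]
The first summand is $o(\rho_{i,\beta}^p)$ from the hypothesis $(\sqrt\beta\wedge\beta^{1/p})\rho_{k,\beta}\to\infty$, and the second is $o(\rho_{i,\beta}^p)$ by virtue of the scaling of $T_\beta$ against $\ell_\beta\rho_{1,\beta}^p$ fixed in \eqref{cond_T}. Hence eventually $\|\imath_\beta X-m\|_{L^p((0,1))}<\rho_{i,\beta}$, contradicting $X\in[\imath_\beta\mc O_{\rho_{i,\beta}}(\mc M_{n-1})]^{\mathrm c}$.

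The main obstacle is the treatment of the settling intervals: showing that the crude bound $(R+1)^p(z+1)T_\beta/\ell_\beta$ falls below $\rho_{i,\beta}^p$ is precisely the reason the scale of $T_\beta$ must be tuned so finely in \eqref{cond_T}, and this is the delicate step. The rest is bookkeeping---reading off the $\sigma_k$, invoking the defining property of $\tau_{k+1}$ to match signs on the stable intervals, padding the profile to land in $\mc M_{n-1}$ exactly, and performing the standard interpolation between the $L^2$ bound from $\mc W_\gamma$ and the $L^\infty$ bound from $\mc X_R$.
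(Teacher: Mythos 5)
Your overall strategy is the same as the paper's: build a piecewise constant profile from the signs $\operatorname{sign}(X_{\tau_k+T_\beta})$, note that between $\tau_k+T_\beta$ and the next zero $\tau_{k+1}$ the sign of $X$ is constant so that $|X_t-M(t)|=\bigl||X_t|-1\bigr|$ there, control this via $W(x)\ge\tfrac12(|x|-1)^2$, the constraint $\mc W_\gamma$ and $L^\infty$--$L^2$ interpolation on $\mc X_R$ (yielding exactly the threshold $\beta^{-1/2}\vee\beta^{-1/p}$ that motivates the hypothesis on $\rho_{k,\beta}$), and bound the at most $(z+3)T_\beta$ worth of ``settling'' time crudely by $(1+R)^p$. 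The contrapositive framing and the padding of the profile to land exactly in $\mc M_{n-1}$ are cosmetic variants of the paper's argument, which instead keeps the profile with $\bar n$ jumps and observes that it lies in any neighborhood of $\mc M_{n-1}$ when $\bar n<n$.

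There is, however, a concrete error in the one step you yourself identify as delicate. You claim that $(R+1)^p(z+1)T_\beta/\ell_\beta=o(\rho_{i,\beta}^p)$ ``by virtue of the scaling of $T_\beta$ against $\ell_\beta\rho_{1,\beta}^p$ fixed in \eqref{cond_T}''. The third condition in \eqref{cond_T} reads $\lim_\beta T_\beta/(\ell_\beta\rho_{1,\beta}^p)=\infty$, i.e.\ $T_\beta/\ell_\beta\gg\rho_{1,\beta}^p$ --- the \emph{opposite} of what you need. As printed, that condition makes the settling contribution \emph{dominate} $\rho_{1,\beta}^p$, so your appeal to it cannot close the argument. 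The paper handles this term differently, asserting that $\beta\,(3+\mc Z)T_\beta(1+R)^p/\ell_\beta\to0$, so that the settling contribution is $o(\beta^{-1})$, which suffices because the hypothesis $(\sqrt\beta\wedge\beta^{1/p})\rho_{i,\beta}\to\infty$ forces $\rho_{i,\beta}^p\gg\beta^{-1}$; whatever one thinks of how that assertion interacts with \eqref{cond_T}, you must obtain the smallness of $T_\beta/\ell_\beta$ from a source other than the third condition of \eqref{cond_T}, and your write-up as it stands cites a hypothesis for its own negation. Two further minor points: padding with an \emph{odd} number of auxiliary jumps clustered in an interior sub-interval flips the profile on the entire remaining tail and is not a negligible perturbation --- the cluster must abut the endpoint $s=1$; and your decomposition into $G_k\cup B_k$ omits the final window $[\ell_\beta-2T_\beta,\ell_\beta]$, where $X$ may change sign and which must be absorbed into the crude bound (it only adds $2T_\beta$, so this is bookkeeping).
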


\begin{proof} Let $X \in\left[ \imath_\beta \mc O_{\rho_{i,\beta}} (\mc M_{0}) \right]^{\rm c}  \cap \mc W_\gamma \cap \mc X_R \cap  \{\mc Z\le z\}$. Set 
\[ 
J  = \{ j \in \{ 1, \dots, \mc Z(X)\} \colon   X_{T_\beta + \tau_{j-1}}   X_{T_\beta + \tau_{j}}  <0 \},  \quad  \bar n = |J|,     
\]
and, in the case $J \ne \emptyset$, let $j_1, \dots, j_{\bar n}$ be an increasing enumeration of the elements of $J$. Assuming with no loss of generality that $X_{\tau_{j_1}+T_\beta} >0$, let $m_J^{-} =  m^{-, \bar n}_{s_1, \dots, s_{\bar n}}$, with $s_k =   \tau_{j_k}/\ell_\beta$, $k=1, \dots, \bar n$, if $\bar n \ge 1$ and  $m_{\emptyset}^{-} = -1$. Since $X\in \mc X_R$, 
\begin{gather}
\label{stimarho}
\| \imath_\beta X- m^{-}_J\|_{L^p}^p \le  \frac{(3+ \mc Z(X)) T_\beta}{\ell_\beta} (1+R)^p   +    B_1(X) + B_2(X) ,
\end{gather}
where
\begin{gather*}
B_1(X) =  \frac{1} {\ell_\beta} \sum_{k=1}^{\bar n +1 } \sum_{m=0}^{j_k-j_{k-1} -1}  \int_{T_\beta+\tau_{j_{k-1}+m} }^{\tau_{j_{k-1} +1+m} }\! \rmd t \, |X_t- (-1)^k|^p,   \\ B_2(X) =   \frac{1}{\ell_\beta} 
\int_{T_\beta+\tau_{\mc Z(X)}}^{\ell_\beta- 2T_\beta}\! \rmd t \, |X_t- (-1)^{\bar n+1}|^p  \,  \id_{\tau_{\mc Z(X)} +T_\beta < \ell_\beta - 2T_\beta}.
\end{gather*}
From the assumptions on $T_\beta$ and $\mc Z(X) \le z$ it follows that the first term on the right-hand side of~\eqref{stimarho} satisfies $\lim_\beta  \beta \frac{(3+ \mc Z(X)) T_\beta}{\ell_\beta} (1+R)^p  = 0 $. To estimate $B_1(X)$ we consider first the case  $p\in [2, \infty)$. We observe that  $X \ge 0$ (resp.~$X\le 0$) on  $[T_\beta+\tau_{j_{k-1}+m} ,\tau_{j_{k-1} +1+m}]$ for any $k$ even (resp.~$k$ odd) and any  $m$. Therefore,  by using that  $(x+1)^2 \le C W(x)$ for $x<0$ and $(x-1)^2 \le C W(x)$ for $x>0$,  we deduce 
\begin{gather*}
B_1(X)   \le C(1+R)^{p-2} \frac{1+\gamma}{\beta}       .
\end{gather*}
The term $B_2(X)$ satisfies the same bound, as can be shown with the same reasoning using that $X\ge 0 $ (resp.~$X\le 0$) on $[T_\beta + \tau_{\mc Z}, \ell_\beta - 2T_\beta]$ if $\bar n$ is odd (resp.~even). Hence, eventually as $\beta \to \infty$, 
\begin{equation}
\label{pincopallo}
\| \imath_\beta X- m^{-}_{J}\|_{L^p}     \le   C \beta^{-1/p}      .
\end{equation}
On the other hand, if  $\bar n < n$, then $m^{-}_{J} \in \mc O_{\rho_\beta} (\mc M_{n-1})$ and thus $ \| \imath_\beta X- m^{-}_{J}\|_{L^p} \ge \rho_{i,\beta} $, which, together with~\eqref{pincopallo}, contradicts the  assumption on $\rho_{i,\beta}$. Therefore $\bar n\ge n$, which concludes the proof in the case  $p\in [2, \infty)$. For $p\in [1, 2)$ we use $\|\imath_\beta X -  m^{-}_{J} \|_{L^p}  \le  \|\imath_\beta X -  m^{-}_{J} \|_{L^2} \le C \beta^{-1/2}$, where we used~\eqref{pincopallo} with $p=2$ in the second inequality. 
\end{proof}

\begin{lemma}
\label{lemma_inclusione2}
Fix $\gamma, R>0$ and $ z \in \bb N$. Let also  $(\delta_\beta)_{\beta>0}$ be a positive sequence such that $\lim_\beta \delta_\beta = 0$ and  $\lim_\beta \delta_\beta\rho_{2,\beta}^{-p} = \infty$. Then, eventually as $\beta\to \infty$, 
\[
\{ \mc  N  = n  \}   \cap    \mc B_{n,\delta_\beta}  \cap \mc W_\gamma \cap \mc X_R   \cap \{\mc Z \le z\} \subset  \left[ \imath_\beta \mc O_{\rho_{2,\beta}} (\mc M_{n-1}) \right]^{\rm c}. 
\]
\end{lemma}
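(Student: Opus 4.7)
The plan is to reuse the constructive step from the proof of Lemma~\ref{lemma_inclusione} to associate to $X$ a profile $m_J^\sigma\in\mc M_n$ that is $L^p$-close to $\imath_\beta X$, and then to argue via the triangle inequality that any $m\in\mc M_{n-1}$ is $L^p$-far from $\imath_\beta X$. For $X$ in the event on the left-hand side, on $\{\mc N=n\}$ let $j_1<\cdots<j_n$ enumerate the indices in $\{1,\dots,\mc Z(X)\}$ at which $X_{T_\beta+\tau_{k-1}}X_{T_\beta+\tau_k}<0$, set $s_k:=\tau_{j_k}/\ell_\beta$, and let $\sigma\in\{\pm\}$ denote the sign of $X_{T_\beta}$. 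Define $m_J^\sigma:=m^{\sigma,n}_{s_1,\dots,s_n}\in\mc M_n$. Running the estimate in the proof of Lemma~\ref{lemma_inclusione} that led to~\eqref{pincopallo} verbatim---it relied only on $X\in\mc W_\gamma\cap\mc X_R\cap\{\mc Z\le z\}$ and produced a bound depending on $\mc N(X)$---gives, eventually,
\[
\|\imath_\beta X-m_J^\sigma\|_{L^p}\le C\beta^{-(1/p)\wedge(1/2)},
\]
and this is $o(\rho_{2,\beta})$ by $(\sqrt\beta\wedge\beta^{1/p})\rho_{2,\beta}\to\infty$.

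Next, I would check that the $n$ jumps $s_1,\dots,s_n$ of $m_J^\sigma$ and the endpoints $0,1$ are mutually separated by at least $\delta_\beta$. The initial gap $s_1>\delta_\beta$ is immediate from $\tau_{j_1}\ge\tau_1>\ell_\beta\delta_\beta$ on $\mc B_{n,\delta_\beta}$. The intermediate gaps obey $s_{k+1}-s_k\ge(j_{k+1}-j_k)T_\beta/\ell_\beta\ge T_\beta/\ell_\beta$, while the final gap satisfies $1-s_n\ge 2T_\beta/\ell_\beta$ thanks to $\tau_{j_n}\le\tau_{\mc Z}\le\ell_\beta-2T_\beta$. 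Imposing on $T_\beta$ the additional constraint $T_\beta\ge\ell_\beta\delta_\beta$---compatible with~\eqref{cond_T} since $\delta_\beta\to 0$, and matching the hypothesis $\delta_\beta\ell_\beta/T_\beta\to 0$ used in Proposition~\ref{prop_sharp_2}(iii), the intended downstream application---guarantees that all gaps exceed $\delta_\beta$.

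The decisive geometric ingredient is then the bound
\[
\inf_{m\in\mc M_{n-1}}\|m-m'\|_{L^p}\ge 2\delta^{1/p},
\]
valid for any $m'\in\mc M_n$ whose $n$ jumps are separated from each other and from the endpoints by at least $\delta$. This follows from a parity/pigeonhole argument: $m'$ partitions $(0,1)$ into $n+1$ alternating-sign blocks, each of length $\ge\delta$, whereas any $m\in\mc M_{n-1}$ has only $n$ blocks of constancy, so at least one entire block of $m'$ must lie inside $\{m\ne m'\}$, on which $|m-m'|=2$. Combining with the triangle inequality and the first two steps yields, for any $m\in\mc M_{n-1}$,
\[
\|\imath_\beta X-m\|_{L^p}\ge 2\delta_\beta^{1/p}-C\beta^{-(1/p)\wedge(1/2)}>\rho_{2,\beta}
\]
eventually in $\beta$, using $\delta_\beta/\rho_{2,\beta}^p\to\infty$ together with $\beta^{-(1/p)\wedge(1/2)}/\rho_{2,\beta}\to 0$. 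The desired inclusion follows.

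The main obstacle is the geometric bound in the preceding paragraph: although intuitively transparent, a rigorous argument requires a finite case analysis tracking cumulative parities of the jumps of $m$ and $m'$ to exclude that $m$ could ``absorb'' two compensating jumps inside a single block of $m'$---an eventuality precisely ruled out by the separation $\ge\delta$ of the jumps of $m'$. A secondary technical point is to guarantee that the constraint $T_\beta\ge\ell_\beta\delta_\beta$ on the choice of $T_\beta$ remains compatible with all of~\eqref{cond_T}, which is straightforward given the freedom in $T_\beta$ and the fact that $\delta_\beta\to 0$.
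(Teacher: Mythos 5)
Your proof takes essentially the same two-step route as the paper: (1) attach to $X$ a profile $m_J^\sigma\in\mc M_n$ with jumps at $s_k=\tau_{j_k}/\ell_\beta$ and invoke the estimate~\eqref{pincopallo} from the proof of Lemma~\ref{lemma_inclusione} to get $\|\imath_\beta X-m_J^\sigma\|_{L^p}=o(\rho_{2,\beta})$; (2) show that any $m\in\mc M_{n-1}$ is at $L^p$-distance of order $\delta_\beta^{1/p}$ from $m_J^\sigma$, and conclude by the triangle inequality together with $\delta_\beta\rho_{2,\beta}^{-p}\to\infty$.

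The one place you diverge from the paper is the geometric pigeonhole in step (2). You identify an entire block of constancy of $m_J^\sigma$ contained in $\{m\neq m_J^\sigma\}$ via an alternating-parity argument, yielding the bound $2\delta_\beta^{1/p}$; you correctly flag this as needing care (to rule out two compensating jumps of $m$ inside one block). The paper's version sidesteps the parity issue entirely: it places the $n$ disjoint open intervals $I_k=(s_k-\delta_\beta/2,\,s_k+\delta_\beta/2)$ around the \emph{jumps} of $m_J^\sigma$, notes that $m\in\mc M_{n-1}$ has at most $n-1$ jumps and hence is constant on some $I_{\bar k}$, and deduces $|m-m_J^\sigma|=2$ on one half of $I_{\bar k}$, giving $2(\delta_\beta/2)^{1/p}$. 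Your variant gains a factor $2^{1/p}$, the paper's is a one-line pigeonhole without parity bookkeeping; either suffices.

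Your secondary concern is also legitimate and worth recording: as written, the intermediate gaps $s_{k+1}-s_k$ are only guaranteed to exceed $T_\beta/\ell_\beta$ (when some $j_k>k$), and the right gap $1-s_n$ exceeds $2T_\beta/\ell_\beta$, so one indeed needs $T_\beta\ge\ell_\beta\delta_\beta$ to get all gaps $\ge\delta_\beta$. The paper does not state this in the hypotheses of Lemma~\ref{lemma_inclusione2}, but it is ensured implicitly at the point of use: in the proof of Theorem~\ref{theorem_sharp} the sequence $\delta_\beta$ is chosen with $\delta_\beta\ell_\beta/T_\beta\to 0$ (the same condition as in Proposition~\ref{prop_sharp_2}(iii)), which is exactly the compatibility constraint you impose. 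So your explicit handling of this point is a small improvement in precision over the paper's exposition, not a deviation in substance.
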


\begin{proof}
The statement is a direct consequence of the following two steps.

\smallskip
\noindent
{\it Step 1.} If $X\in  \{ \mc  N  = n  \}   \cap    \mc B_{n,\delta_\beta}  \cap 
\mc W_\gamma \cap \mc X_R \cap \{\mc Z \le z\} $ then there are $0= s_0 <s_1< \dots < s_n<1$ satisfying 
$s_k-s_{k-1}\ge \delta_\beta$, $k=1, \dots, n$,  $s_n\le 1-\delta_\beta$ and $*\in \{-,+\}$ such that
\[   \| \imath_\beta X - m^{*, n}_{s_1, \dots, s_n}\|_{L^p}  < \rho_{2,\beta}  . \]

The proof of this step is achieved by arguing as in the proof of Lemma~\ref{lemma_inclusione}.

\smallskip
\noindent
{\it Step 2.}  Let $0=s_0<s_1<\dots < s_n<1$ be such that 
$s_k-s_{k-1}\ge \delta_\beta$, $k=1, \dots, n$,  $s_n\le 1-\delta_\beta$ and $*\in \{-,+\}$. Then
\[  {\rm dist}_{L^p} (m^{*, n}_{s_1, \dots, s_n}, \mc M_{n-1} ) \ge   2 (\delta_\beta/2)^{1/p}   .   \]

Consider the intervals $I_k =(s_k-\delta_\beta/2, s_k+\delta_\beta/2)$, $k=1,\dots, n$, which are pairwise disjoint. For any $m\in \mc M_{n-1}$ there exist an interval $I_{\bar k}$ on which 
$m$ is constant. Hence, 
\[
{\rm dist}_{L^p} (m^{*, n}_{s_1, \dots, s_n}, \mc M_{n-1} )^p \ge     2^p \delta_\beta/2.
\]
\end{proof}

\begin{proof}[Proof of Theorem~\ref{theorem_sharp}]
For each $\gamma, R,z >0$ and $k=1,2$, by Lemma~\ref{lemma_inclusione}, eventually as $\beta \to \infty$, we have 
\[
\mu_\beta( [\mc O_{\rho_{k, \beta}}  (\mc M_{n-1})]^{\mathrm c}) \le   \mu_{\beta} (\mc N \ge n)  -  \mu_{\beta} (\mc W^{\mathrm c}_{\gamma}) -  \mu_{\beta} (\mc X^{\mathrm c}_{R}) -   \mu_{\beta} (\mc Z >  z).
\]
Proposition~\ref{prop_sharp_2} (ii) provides the sharp asymptotics of 
$\mu_{\beta} (\mc N \ge n) $. Choosing $\gamma>0$, $R$ large enough, and $z =n$, Lemmata~\ref{lemma_Pgrande} (ii),~\ref{p:1},~\ref{t:5.1} and Proposition~\ref{prop_sharp_2} (i) imply that the remaining terms are negligible with respect to $\mu_{\beta} (\mc N \ge n) $. Hence
\begin{equation}
\label{UB_sharp}
\mu_\beta( [\mc O_{\rho_{k, \beta}}  (\mc M_{n-1})]^{\mathrm c})  \le 
\frac{1}{n!} \Big(\frac{\ell_\beta}{\bar \ell_\beta} \Big)^n (1 + o_\beta(1)).
\end{equation}
For $k=1$ this proves the upper bound in the statement. To deduce the lower bound we write
\begin{align}
\label{prob_totali}
& \mu_\beta( \mc O_{\rho_{2,\beta}} (\mc M_{n})  \setminus \mc O_{\rho_{1, \beta}}  (\mc M_{n-1}))  \nonumber  \\ & \qquad = \mu_\beta( [\mc O_{\rho_{1, \beta}}  (\mc M_{n-1})]^{\mathrm c})  -\mu_\beta( [\mc O_{\rho_{1, \beta}}  (\mc M_{n-1})]^{\mathrm c}  \cap  (\mc O_{\rho_{2,\beta}} (\mc M_{n}))^{\mathrm c} )   \nonumber  \\ & \qquad \ge   \mu_\beta( [\mc O_{\rho_{1, \beta}}  (\mc M_{n-1})]^{\mathrm c})  -\mu_\beta( (\mc O_{\rho_{2,\beta}} (\mc M_{n}))^{\mathrm c} ).
\end{align}
In view of~\eqref{cond_T}, we can choose a sequence $(\delta_\beta)_{\beta>0}$ such that $\lim_\beta \delta_\beta\rho_{1,\beta}^{-p} = \infty$ and $\lim_{\beta}  \delta_\beta \ell_\beta /T_\beta = 0$. For each $\gamma, R,z >0$, by Lemma~\ref{lemma_inclusione2}, eventually as $\beta \to \infty$, 
we have 
\[ 
\mu_\beta( [\mc O_{\rho_{1, \beta}}  (\mc M_{n-1})]^{\mathrm c})  \ge   \mu_{\beta} (\mc N = n, \mc B_{n, \delta_\beta})  -  \mu_{\beta} (\mc W^{\mathrm c}_{\gamma}) -  \mu_{\beta} (\mc X^{\mathrm c}_{R}) -   \mu_{\beta} (\mc Z >  z).
\]
Proposition~\ref{prop_sharp_2} (iii) provides the sharp asymptotics of $\mu_{\beta} (\mc N = n, \mc B_{n, \delta_\beta}) $. Choosing $\gamma>0$, $R$ large enough, and $z =n$, Lemmata~\ref{lemma_Pgrande} (ii),~\ref{p:1},~\ref{t:5.1} and Proposition~\ref{prop_sharp_2} (i)  imply that the remaining terms are negligible with respect to $\mu_{\beta} (\mc N = n, \mc B_{n, \delta_\beta}) $. Hence,
\[  
\mu_\beta( [\mc O_{\rho_{1, \beta}}  (\mc M_{n-1})]^{\mathrm c})  \ge  \frac{1}{n!} \Big(\frac{\ell_\beta}{\bar \ell_\beta} \Big)^n (1 + o_\beta(1)).
\]
Recalling~\eqref{prob_totali} and using~\eqref{UB_sharp} with $k=2$ and $n$ in place of $n-1$, the lower bound of the statement follows. 
\end{proof}

\begin{proof}[Proof of Proposition~\ref{prop_sharp_2}.]
$~$

\smallskip
\noindent
{\it Proof of (i).}  Since $ \{\mc Z \ge n \} \in  \sigma(\{ X_t\}_{t\in [T_\beta, \ell_\beta - T_\beta]}) $, by Lemma~\ref{lemma_staz0}  it is enough to prove the statement for  $\bb P^\beta_{\pi_\beta}$ instead  of $\mu_{\beta, \ell_\beta}$. This will be achieved by induction. More precisely, set $T_{0, \beta} = T_\beta$ and fix sequences  $T_{1,\beta},\dots,  T_{n,\beta}$ such that $T_{k, \beta} / T_{k-1, \beta} \to \infty$, $k= 1, \dots,  n$, and $T_{n, \beta}/\ell_\beta \to 0$. We recursively define the sequence of stopping times $\sigma_0,\dots, \sigma_n $ on $C([0, \infty))$, by setting $\sigma_0 = 0$ and $  \sigma_k    =     \inf\{ t \ge T_\beta+\sigma_{k-1}:    X_{t} = 0 \} ,  \ k=1, \dots, n $. We claim that, for $k= 1, \dots,  n$, 
\begin{equation}
\label{induction_sigma}
\bb P^\beta_{\pi_\beta}  ( \sigma_k <  t    )   \sim  
\frac  {(t A_\mathrm{W} \sqrt \beta )^k}{  k! }   \rme^{- k \beta C_\mathrm{W}}      \   \   \   
\text{ uniformly for } t\in [T_{k,\beta}, \ell_\beta  ]   .  
\end{equation}
Since $T_{1, \beta}/T_\beta \to \infty$, the case $k=1$ follows from Theorem~\ref{t:sht}. To prove the inductive step, by the strong Markov property, 
\begin{gather*}
\bb P^\beta_{\pi_\beta}  ( \sigma_{k+1} <  t   )  =  \int_{T_\beta}^{t - T_\beta}  \!  \bb P^\beta_{\pi_\beta}  (   \sigma_k \in \rmd s  )  \bb P^\beta_{0} (\sigma_1 < t-s) . 
\end{gather*}
By Lemma~\ref{lemma_staz0} and the recursive hypothesis, it is enough to show 
\begin{gather*}   
\int_{T_\beta}^{t - T_\beta}\!  \bb P^\beta_{\pi_\beta}  (\sigma_k \in \rmd s  )  \bb P^\beta_{\pi_\beta}  (\tau < t-s -T_\beta)   \sim    \frac  {(t A_\mathrm{W} \sqrt \beta )^{k+1}}{  (k+1)! }   \rme^{- (k+1) \beta C_\mathrm{W}},   
\end{gather*}
uniformly for  $t\in [T_{k+1,\beta}, \ell_\beta] $. We first observe that
\begin{gather*}   
\int_{t- 2T_\beta}^{t - T_\beta}\!  \bb P^\beta_{\pi_\beta}  (   \sigma_k \in \rmd s) \bb P^\beta_{\pi_\beta}  (\tau < t-s -T_\beta) \le  \bb P^\beta_{\pi_\beta} ( \sigma_k< t-T_\beta)  \bb P^\beta_{\pi_\beta}   ( \tau\le T_\beta)  \\ \sim   \frac  { T_\beta  t^k   ( A_\mathrm{W} \sqrt \beta )^{k+1}}{ k! }   \rme^{- (k+1) \beta C_\mathrm{W}},
\end{gather*}
where we used Theorem~\ref{t:sht} and the recursive hypothesis. On the other hand, by  the same theorem, 
\begin{gather*}
\int_{T_\beta}^{t - 2 T_\beta} \! \bb P^\beta_{\pi_\beta}  (   \sigma_k \in \rmd s ) \bb P^\beta_{\pi_\beta}  (\tau < t-s -T_\beta) \\ =    A_\mathrm{W} \sqrt \beta    \rme^{-  \beta C_\mathrm{W}}   \int_{T_\beta}^{t - 2 T_\beta} \! \bb P^\beta_{\pi_\beta}  (   \sigma_k \in \rmd s )  (t-s-T_\beta)   ( 1   +   R_{\beta}(t-s)) ,
\end{gather*}
with  
\[    
\lim_{\beta \to \infty} \sup_{t\in [3T_\beta, \ell_\beta] }  \sup_{s'\in[ 2 T_\beta, t  - T_\beta ]} | R_\beta (s' ) |  =  0.
\]
By integration by parts, since   $ \bb P^\beta_{\pi_\beta}  (   \sigma_k  \le T_\beta  ) = 0$, 
\begin{gather*}
\int_{T_\beta}^{t - 2 T_\beta}  \!  \bb P^\beta_{\pi_\beta}  (   \sigma_k \in \rmd s ) (t-s-T_\beta)     =  \\  \int_{T_\beta}^{ T_{k,\beta}} \! \rmd s \,     \bb P^\beta_{\pi_\beta}  (   \sigma_k  \le s  ) +  \int_{T_{k,\beta}}^{t - 2 T_\beta} \! \rmd s \,  \bb P^\beta_{\pi_\beta}  (   \sigma_k  \le s  )    +   \bb P^\beta_{\pi_\beta}  (   \sigma_k  \le t- 2T_\beta  ) T_\beta    .
\end{gather*}
By the recursive  hypothesis and the choice of $T_{k, \beta}$,  
\[ 
\int_{T_{k,\beta}}^{t - 2 T_\beta} \! \rmd s\, \bb P^\beta_{\pi_\beta}  (   \sigma_k  \le s )   \sim \frac{ (A_\mathrm{W} \sqrt\beta)^k  \rme^{-  k \beta C_\mathrm{W}}      }{(k+1)!}    \left[ (t - 2 T_{\beta})^{k+1}     -  T_{k,\beta}^{k+1} \right]   , 
\]
while
\[
\int_{T_\beta}^{ T_{k,\beta}} \! \rmd s \,  \bb P^\beta_{\pi_\beta}  (   \sigma_k  \le s )    \le  T_{k, \beta} \bb P^\beta_{\pi_\beta}  (   \sigma_k  \le T_{k, \beta} ) \sim    \frac{ (A_\mathrm{W} \sqrt\beta)^k  \rme^{-  k \beta C_\mathrm{W}}}{ k!} T_{k, \beta}^{k+1}  ,     
\]
and
\[  
\bb P^\beta_{\pi_\beta}  (   \sigma_k  \le t- 2T_\beta  ) T_\beta      \sim 
T_\beta  (t-  2T_\beta)^k   \frac{ (A_\mathrm{W} \sqrt\beta)^k  \rme^{-  k \beta C_\mathrm{W}} }{ k!}.
\]
Gathering the previous bounds,~\eqref{induction_sigma} follows. 

\smallskip
\noindent
{\it Proof of (ii).} 
By item (i) it is enough to show that 
\[  
\mu_{\beta, \ell_\beta}  ( \mc N  =    n, \mc Z = n )   \sim   \frac{1}{n!} \Big(\frac{\ell_\beta}{\bar \ell_\beta}\Big)^n .
\]
Given  $ t_1 < t_2 $ and $x,y \in \bb R \cup \{{\rm f}\}$ denote by $\mu_{\beta, [t_1, t_2]}^{x,y}$ the $\phi^4_1$ measure on the interval $[t_1, t_2]$ with Dirichlet boundary conditions $x,y$ at the endpoints. When $x$ or $y$ equals ${\rm f}$ we understand free boundary conditions. In particular  $\mu_{\beta, [0, \ell]}^{{\rm f}, {\rm f}}  = \mu_{\beta, \ell}$. By the strong Markov property of $\mu_{\beta, \ell}$
\begin{gather*}   
\mu_{\beta, \ell} ( \cdot  | \mc Z = n)   \\ =    \int\!
\nu_{\beta, \ell}^n (\rmd t_1,\dots, \rmd t_n ) \,
\mu_{\beta, [0, t_1]}^{{\rm f},0}   \otimes  \mu_{\beta, [t_{1}, t_{2}]}^{0,0} \otimes  \cdots \otimes  \mu_{\beta, [t_{n-1}, t_{n}]}^{0,0} 
\otimes     \mu_{\beta, [t_n, \ell]}^{0, {\rm f} }   ,   
\end{gather*}
where
\[
\nu_{\beta, \ell}^n (\rmd t_1,\dots, \rmd t_n )   =   \mu_{\beta, \ell} (\tau_1\in \rmd t_1, \dots,\tau_n\in \rmd t_n | \mc Z = n ) .  
\]
Since the probabilities $\mu^{0,0}_{\beta, [t_k,  t_{k+1}]}$, $k=1, \dots, n-1$, $\mu^{{\rm f},0}_{\beta, [0,  t_{1}]}$, and  $\mu^{0, {\rm f}}_{\beta, [t_n,  \ell]}$  are invariant with respect to $X \mapsto -X$ we deduce  $\mu_{\beta, \ell_\beta} (\mc N = n, \mc Z = n) =  2^{-n} \mu_{\beta, \ell_\beta} (\mc Z = n)   $. The statement now follows from (i).

\smallskip
\noindent
{\it Proof of (iii).}  By the symmetry argument used in (ii), it is enough to show 
\[
\mu_{\beta, \ell_\beta}  ( \{\mc Z = n\} \cap \mc B_{n, \delta_\beta} )   \sim   \frac{2^n}{n!} \Big(\frac{\ell_\beta}{\bar \ell_\beta}\Big)^n.
\]
This is achieved by the induction argument used in (i), noticing that  Theorem~\ref{t:sht} implies
\[
\bb P^\beta_{\pi_\beta} ( \ell_\beta\delta <  \sigma_1 < t )
\sim   \frac {2\,t}{\bar\ell_\beta},
\] 
with $\sigma_1$ as introduced in (i).
\end{proof}

\section{Converge of the transition times to a Poisson point process} 
\label{sec:7}

Let $(T_\beta)_{\beta>0}$ be a sequence meeting the first two conditions in~\eqref{cond_T} and recursively define the sequence of  stopping times $\sigma_0,\sigma_1, \dots $ on $C([0, \infty))$, by setting $\sigma_0 = 0$ and $  \sigma_k  = \inf\{ t \ge T_\beta+\sigma_{k-1} \colon X_{t} = 0 \} ,  \ k\in \bb N $. Further set 
\[
\mc N =  \inf  \{ k \in \bb N \colon X_{T_\beta + \sigma_{k-1}}  X_{T_\beta + \sigma_{k}}  <0 \},
\]
and   $\zeta =  \sigma_{\mc N}$.

\begin{lemma} 
\label{lemma_exp}
When $X$ is sampled according to $\bb P^\beta_{\pi_\beta}$  and $\beta \to \infty$ the sequence of random variables $ ({\bar\ell_\beta}^{-1} \zeta)_{\beta>0}$ converges in law to an exponential random variable with parameter one.
\end{lemma}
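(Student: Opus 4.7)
The plan is to show $\Phi_\beta(s) := \bb E^\beta_{\pi_\beta}\bigl[e^{-s\zeta/\bar\ell_\beta}\bigr] \to \frac{1}{1+s}$ for every $s\ge 0$, the latter being the Laplace transform of an $\mathrm{Exp}(1)$ random variable. Set $Y_1 := \sigma_1$ and $\xi_k := \mathrm{sign}(X_{T_\beta+\sigma_k})$, so that $\{\mc N=1\} = \{\xi_0\xi_1<0\}$ with $\zeta = Y_1$, while on $\{\mc N\ge 2\}= \{\xi_0=\xi_1\}$ one has $\mc N = 1+\tilde{\mc N}$ and hence $\zeta = Y_1 + \tilde\zeta$, where the tilde variables refer to the shifted process $(X_{\sigma_1+t})_{t\ge 0}$. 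Note that $Y_1 = T_\beta + \tau$ with $\tau$ the hitting time of $0$ from $X_{T_\beta}$, and that under $\bb P^\beta_{\pi_\beta}$ one has $X_{T_\beta}\sim \pi_\beta$ by stationarity.

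The key tool is the strong Markov property at $\sigma_1$: since $X_{\sigma_1}=0$, conditionally on $\mc F_{\sigma_1}$ the shifted process has law $\bb P^\beta_0$. Introducing the auxiliary Laplace transform $\bar\Phi_\beta(s) := \bb E^\beta_0[e^{-s\zeta/\bar\ell_\beta}]$ and exploiting the $X\mapsto -X$ symmetry of both $\bb P^\beta_{\pi_\beta}$ and $\bb P^\beta_0$ (which renders $Y_1$ asymptotically independent of $\xi_0$, equalizes $\bb E^\beta_0[e^{-s\zeta/\bar\ell_\beta}\,|\,\xi_0=\pm 1]$, and makes $\bb P^\beta_0(\xi_0=\pm 1)\to\tfrac{1}{2}$), I would derive the self-consistent recursion
\[
\Phi_\beta(s) = \bb E^\beta_{\pi_\beta}\bigl[e^{-sY_1/\bar\ell_\beta}\bigr]\cdot\frac{1+\bar\Phi_\beta(s)}{2} + o(1),
\]
together with the identical recursion for $\bar\Phi_\beta(s)$ obtained by repeating the argument starting from $\bb P^\beta_0$. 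Inserting the limits $\bb E^\beta_{\pi_\beta}[e^{-sY_1/\bar\ell_\beta}]\to \frac{2}{2+s}$ (from Theorem~\ref{t:sht2} and $T_\beta/\bar\ell_\beta\to 0$) and $\bb E^\beta_0[e^{-sY_1/\bar\ell_\beta}]\to \frac{2}{2+s}$ (from Proposition~\ref{sharpwp}(iii), which also gives $\bb P^\beta_0(\xi_0=\pm1)\to\tfrac{1}{2}$), and extracting a subsequence along which $\bar\Phi_\beta(s)\to \bar\Phi_\infty(s)$ for $s$ in a countable dense set (possible since $\bar\Phi_\beta\in[0,1]$), passage to the limit in the recursion for $\bar\Phi_\beta$ yields the fixed-point equation $\bar\Phi_\infty(s) = \frac{1+\bar\Phi_\infty(s)}{2+s}$, whose unique solution is $\bar\Phi_\infty(s)=\frac{1}{1+s}$. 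Subsequence-independence promotes this to convergence along the full sequence; substituting into the recursion for $\Phi_\beta$ yields $\Phi_\beta(s)\to\frac{1}{1+s}$, as desired.

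The main obstacle is the precise justification of the additive $o(1)$ error in the recursion. This requires: quantifying the asymptotic independence of $\xi_0$ and $Y_1$ under $\bb P^\beta_{\pi_\beta}$ via the $X\mapsto -X$ symmetry of $\pi_\beta$ and of the dynamics; using the same symmetry under $\bb P^\beta_0$ to decouple $\tilde\zeta$ from the observed value of $\xi_0$ in the conditioning event $\{\tilde\xi_0=\xi_0^{\mathrm{obs}}\}$; and controlling the negligible event $\{X_{T_\beta+\sigma_k}\notin I_\delta^-\cup I_\delta^+\}$ through Proposition~\ref{Prop_Semicl}(iv) together with the concentration of $\pi_\beta$ near $\pm 1$.
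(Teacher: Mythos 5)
Your argument is correct and follows essentially the same route as the paper: there one observes directly that, by the strong Markov property and the $X\mapsto -X$ symmetry of $\bb P^\beta_0$, the variable $\mc N$ is geometric of parameter $1/2$ and independent of the i.i.d.\ increments $\sigma_k-\sigma_{k-1}$, each of which converges in law (via Theorem~\ref{t:sht2} and the second statement of Lemma~\ref{lemma_staz0}) to an exponential of parameter two, and then one invokes the fact that a geometric sum of independent exponentials is exponential --- which is precisely the content of your Laplace-transform fixed point. One simplification worth noting: the exact symmetry of $\bb P^\beta_0$ makes your recursion hold with no $o(1)$ error at all (conditionally on $\mc F_{\sigma_k}$ the sign $\xi_k$ is a fair coin independent of every flip-invariant quantity, and $\bb P(\xi_k=0)=0$), so the ``main obstacle'' you flag disappears and the linear equation for $\bar\Phi_\beta(s)$ can even be solved exactly before passing to the limit.
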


\begin{proof} 
By the strong Markov property and the symmetry of $\bb P^\beta_{0}$ with respect to $X\mapsto -X$,  the random variable $\mc N$ has a geometric distribution with parameter $ 1/2$. Again by the strong Markov property the random variables $(\sigma_{k}-\sigma_{k-1})_{k\in \bb N}$ are  independent and independent from $\mc N$. By Theorem~\ref{t:sht2} and the second statement of Lemma~\ref{lemma_staz0}, for each $k\in \bb N$, as $\beta \to\infty$, the sequence of random variables $({\bar\ell_\beta}^{-1} (\sigma_{k}-\sigma_{k-1}))_{\beta>0} $ converges in law to an exponential random variable with parameter two. The statement is achieved recalling that a geometric sum of independent exponential random variables is an exponential random variable. 
\end{proof}

\begin{proof}[Proof of Theorem~\ref{t:thm3}]
We need to show that for each $R>0$ the restriction of $\mu_{\beta, \ell_\beta} \circ \imath^{-1}_{\bar \ell_\beta}$ to $L^p((-R, R))$ converges weakly to the restriction of $\bar \mu$ to $L^p((-R, R))$. Recalling~\eqref{ipell3}, by the first statement of Lemma~\ref{lemma_staz0}, it is enough to show the statement with $\bb P^\beta_{\pi_\beta}$ instead of $\mu_{\beta, \ell_\beta} $.

\smallskip
\noindent
{\it Step 1.} Recalling the sequence of stopping times $(\sigma_k)$ introduced at the beginning of this section, for each $R, \delta>0$,
\[
\lim_{\beta \to \infty}   \bb P_{\pi_\beta}^\beta \Big( \frac{1}{ \bar \ell_\beta} \int_0^{R\bar\ell_\beta}\!  \rmd t\, \Big|X_t - \sum_{k=0} ^\infty \mathrm{sign} (X_{T_\beta + \sigma_k}) \id_{[\sigma_k, \sigma_{k+1})} (t)  \Big|^p >\delta   \Big) = 0, 
\]
where we understand $\mathrm{sign} (0)=0$. 

Arguing as in the proof of Lemma~\ref{lemma_inclusione}, this step follows from Lemmata~\ref{p:1},~\ref{lemma_Pgrande}, Theorem~\ref{t:sht2} and the strong Markov property.

\smallskip
\noindent
{\it Step 2.} Sampling $X$ according to $\bb P_{\pi_\beta}^\beta$, we recursively define the random variables $\mc N_j$, $j=0,1,\dots$, by setting $\mc N_0= 0$ and
\[  
\mc N_j   =    \inf  \{ k > \mc N_j  :   X_{T_\beta + \sigma_{k-1}}  X_{T_\beta + \sigma_{k}}  <0 \} - \mc N_{j-1}.  
\]
Let also $\zeta_j = \sigma_{\mc N_j}$. Then the random variables $( \zeta_{j} - \zeta_{j-1})_{j}$ are independent. Moreover, for each $j$, $\bar \ell^{-1}_\beta (\zeta_{j}- \zeta_{j-1})$ converges in law to an exponential random variable of parameter one in the limit $\beta \to \infty$. This step follows directly from the strong Markov property and Lemma~\ref{lemma_exp}.

\smallskip
The statement is achieved by combining Steps 1 and 2 together with the symmetry of $\bb P_{\pi_\beta}^\beta$ with respect to $X\mapsto - X$.
\end{proof}

\end{document}